\newcommand{\mla}{\mathlarger}
\newtheorem{thm}{Theorem}[section]
\newtheorem{cor}[thm]{Corollary}
\newtheorem{prop}[thm]{Proposition}
\newtheorem{lem}[thm]{Lemma}
\theoremstyle{definition}
\newtheorem{defn}[thm]{Definition}
\newtheorem{exmp}[thm]{Example}
\theoremstyle{remark}
\newtheorem{rem}[thm]{Remark}
\let\c@equation\c@thm
\numberwithin{equation}{section}
\title{Generalised Temperley-Lieb algebras of type $G(r,p,n)$}
\author{Gus Lehrer and Mengfan Lyu}
\address{School of Mathematics and Statistics,
University of Sydney, N.S.W. 2006, Australia}
\email{gustav.lehrer@sydney.edu.au,mengfan.lyu@uts.edu.au }
\thanks{The authors wish to thank the referee for their very comprehensive reading of the manuscript, and in particular for pointing out some errors,
and for numerous suggestions which have significantly improved the exposition.}
\begin{document}

\begin{abstract} In an earlier work, we defined a ``generalised Temperley-Lieb algebra'' $TL_{r,1,n}$ corresponding to the imprimitive reflection group $G(r,1,n)$ as a quotient of the cyclotomic Hecke algebra. In this work we introduce the generalised Temperley-Lieb algebra $TL_{r,p,n}$ which corresponds to the complex reflection group $G(r,p,n)$. Our definition identifies $TL_{r,p,n}$ as the fixed-point subalgebra of $TL_{r,1,n}$ under a certain automorphism $\sigma$. We prove the cellularity of $TL_{r,p,n}$ by proving that $\sigma$ induces a special shift automorphism with respect to the cellular structure of $TL_{r,1,n}$. We also give a description of the cell modules of $TL_{r,p,n}$ and their decomposition numbers, and finally we point to how our algebras might be categorified and could lead to a diagrammatic theory.
\end{abstract}
\keywords{Temperley-Lieb algebra, Hecke algebras, KLR algebras, cellular basis, decomposition numbers}
\subjclass[2020]{Primary 16G20, 20C08; Secondary 20G42}

\maketitle
\tableofcontents

\section{Introduction}

The classical Temperley-Lieb algebra was initially introduced in \cite{TL1971} in connection with transition matrices in statistical mechanics. 
Since then, it has been linked with many diverse areas of mathematics, such as operator algebras, quantum groups, categorification and representation theory.
 It can be defined in several ways, including as a quotient of the Hecke algebra of type $A$, or as an associative diagram algebra, i.e. an algebra whose elements are linear combinations of diagrams
 with certain composition rules.

In \cite{LL22},  we defined a generalised Temperley-Lieb algebra $TL_{r,1,n}$ corresponding to the complex reflection group $G(r,1,n)$ as a generalisation of the Temperley-Lieb algebras of types $A_{n-1}$ and $B_n$. 
In that work, we give a cellular structure for $TL_{r,1,n}$ and find the decomposition matrix of the corresponding cell modules using the technology of KLR algebras.

We generalise that work in this paper. Specifically, we introduce a generalised Temperley-Lieb algebra $TL_{r,p,n}$ which corresponds to an arbitrary imprimitive complex reflection group $G(r,p,n)$. It is defined as the fixed-point subalgebra of a suitably specialised generalised Temperley-Lieb algebra $TL_{r,1,n}$ under an automorphism $\sigma_{TL}$ defined in (\ref{eq817}).

Moreover, we introduce a modified multipartition which is called a 3-dimensional multipartition in Section \ref{3dmlp}, and construct a new cellular structure for our specialised $TL_{r,1,n}$ in Theorem \ref{8.2.5}. Inspired by the skew cellularity introduced by Hu, Mathas and Rostam in \cite{hu2021skew}, we show that $\sigma_{TL}$ induces a shift automorphism with respect to this cellular basis in Theorem \ref{thshift} and deduce the cellularity of $TL_{r,p,n}$ in Theorem \ref{csrpn}.
Using this cellular structure of $TL_{r,p,n}$, which is inherited from $TL_{r,1,n}$, we determine the decomposition matrix of the cell modules of $TL_{r,p,n}$ in Theorem \ref{finaldcm}.

In \S9, we indicate how our algebras might be categorified, and provide some hints for their realisation as diagram algebras.

\section{The cyclotomic Hecke algebra $H(r,p,n)$} \label{8.1.1}
To define the generalised Temperley-Lieb algebra $TL_{r,p,n}$, we first recall the definition of the cyclotomic Hecke algebra $H(r,p,n)$ corresponding to the imprimitive reflection group $G(r,p,n)$. In this section, we also recall Ariki's construction in \cite{ARIKI1995164} showing that $H(r,p,n)$ is the fixed point subalgebra under an automorphism $\sigma$ of $H_n^{\Lambda}(q,\zeta)$, a specialisation of $H(r,1,n)$. We will use this automorphism to define $TL_{r,p,n}$.

The following lemma gives a presentation of the imprimitive reflection group $G(r,p,n)$.
\begin{lem} (cf. \cite{broue1994complex},Appendix 2)
	Let $r,p$ and $n$ be positive integers such that $p|r$ and let $d=\dfrac{r}{p}$. The complex reflection group $G(r,p,n)$ is the group generated by $s_0,s_1,s_1',s_2,$ $s_3,\dots,s_{n-1}$ subject to the following relations:
	\begin{align*}
		s_0^d=s_1^2=s_1'^2=s_2^2&=\dots=s_{n-1}^2=1;\\
		s_0s_1s_0s_1&=s_1s_0s_1s_0; \\
		s_0s_1's_0s_1'&=s_1's_0s_1's_0;\\
		s_0s_1s_1'&=s_1s_1's_0;\\
		s_0s_i=s_is_0&\text{ for }i\geq 2;\\
		s_1's_2s_1'&=s_2s_1's_2;\\
		s_1's_i=s_is_1'&\text{ for } i\geq 3;\\
		s_is_{i+1}s_i=s_{i+1}s_is_{i+1} & \text{ for }i\geq 1;\\
		s_is_j=s_js_i & \text{ for }|i-j|\geq 2 \text{ and }i,j\geq 1;\\
		 \underbrace{s_0s_1s_1's_1s_1'\dots}_{p+1} &= \underbrace{s_1's_0s_1s_1's_1\dots}_{p+1}.
	\end{align*}
\end{lem}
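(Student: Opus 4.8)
The plan is to verify that the complex reflection group $G(r,p,n)$ admits the given presentation by exhibiting an explicit isomorphism between the abstract group $G$ defined by the generators $s_0,s_1,s_1',s_2,\dots,s_{n-1}$ and the listed relations, and the concrete group $G(r,p,n)$ realised as a group of monomial matrices. Recall that $G(r,p,n)$ consists of those $n\times n$ matrices having exactly one nonzero entry in each row and column, with entries $r$-th roots of unity whose product lies in the subgroup of $d$-th roots of unity (where $d=r/p$). First I would fix concrete matrices: let $\zeta=e^{2\pi i/r}$, take $s_0$ to be the diagonal matrix $\mathrm{diag}(\zeta^p,1,\dots,1)$ (a diagonal reflection of order $d$), let $s_i$ for $i\geq 2$ be the transposition matrix swapping coordinates $i$ and $i+1$, let $s_1$ be the transposition swapping coordinates $1$ and $2$, and let $s_1'$ be the ``twisted'' transposition sending $e_1\mapsto \zeta e_2$, $e_2\mapsto \zeta^{-1}e_1$. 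One then checks these matrices generate $G(r,p,n)$.

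The proof proceeds in two directions. First I would check that the chosen matrices satisfy all the stated relations; this is a direct computation, verifying each relation by matrix multiplication. The orders of the generators and the braid/commutation relations among the $s_i$ ($i\geq 1$) are standard type-$A$ computations; the relations involving $s_0$ and the twisted generator $s_1'$ require checking that $s_0s_1s_0s_1=s_1s_0s_1s_0$, that $s_0s_1s_1'=s_1s_1's_0$, and the crucial long relation $\underbrace{s_0s_1s_1's_1s_1'\cdots}_{p+1}=\underbrace{s_1's_0s_1s_1's_1\cdots}_{p+1}$, which encodes the constraint that determinant-like products of roots of unity lie in the correct subgroup. This establishes a surjective homomorphism $\varphi\colon G\twoheadrightarrow G(r,p,n)$ from the abstract group onto the matrix group, provided one also checks that the images generate $G(r,p,n)$, which follows since the $s_i$ generate all permutation matrices and $s_0,s_1'$ together produce all admissible diagonal phase matrices.

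For the reverse direction I would bound the order of the abstract group $G$ from above by $|G(r,p,n)|=p^{-1}r^n n! = d\,r^{n-1}n!$, forcing $\varphi$ to be an isomorphism. The standard technique is to identify a subgroup $D\leq G$ generated by the diagonal-type elements (conjugates of $s_0$ and the element $s_1's_1$, which acts diagonally) and show, using the commutation and braid relations, that $D$ is abelian of order at most $d\,r^{n-1}$ and that $G/D$ is a quotient of the symmetric group $\mathfrak{S}_n$ of order at most $n!$. The main obstacle will be organising the normal-form argument: one must show every word in the generators can be rewritten, using the relations, into a canonical form (a diagonal element times a permutation), and that the number of such normal forms does not exceed $|G(r,p,n)|$. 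The delicate point is the interaction of $s_0$ with $s_1$ and $s_1'$, governed by the last long relation, which controls exactly how much ``twist'' can accumulate and guarantees that the diagonal part sits in the index-$p$ subgroup. Since this presentation is quoted from \cite{broue1994complex}, Appendix 2, I would in practice cite that reference for the counting half and only verify the relations explicitly, as the careful order bound is precisely what that computation in loc.\ cit.\ supplies.
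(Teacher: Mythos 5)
The paper offers no proof of this lemma: it is stated with a citation to Brou\'e--Malle--Rouquier, Appendix 2, and the text that follows merely records the matrix realisation of $G(r,p,n)$ that your proposal also uses. So there is no internal argument to compare against, and your outline must be judged on its own terms; it is the standard and correct strategy. Checking the relations on the explicit monomial matrices is indeed routine (for instance $s_1s_1'=\mathrm{diag}(\zeta^{\pm 1},\zeta^{\mp 1},1,\dots,1)$ makes the relations $s_0s_1s_1'=s_1s_1's_0$ and the length-$(p+1)$ relation reduce to products of commuting diagonal matrices), and together with the generation claim this gives the surjection $\varphi\colon G\twoheadrightarrow G(r,p,n)$. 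Note that your $s_1'$ ($e_1\mapsto \zeta e_2$, $e_2\mapsto \zeta^{-1}e_1$) is the inverse of the paper's $s_1'=\zeta E_{1,2}+\zeta^{-1}E_{2,1}+\sum_{k\geq 3}E_{k,k}$; both are involutions satisfying the same relations, so this is immaterial. Your choice of $D$ is also the right one: the conjugates of $s_0$ and of $s_1's_1$ map onto $\mathrm{diag}(\zeta^p,1,\dots,1)$ and the matrices $\mathrm{diag}(1,\dots,\zeta^{\pm1},\dots,\zeta^{\mp1},\dots,1)$, which generate precisely the diagonal subgroup of $G(r,p,n)$, of order $dr^{n-1}$. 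The two points that need genuine care in the second half are (i) normality of $D$ in the abstract group $G$, so that $G/D$ makes sense and receives the Coxeter presentation of $\mathfrak{S}_n$, and (ii) the bound $|D|\leq dr^{n-1}$, which is exactly where the long relation $\underbrace{s_0s_1s_1's_1s_1'\cdots}_{p+1}=\underbrace{s_1's_0s_1s_1's_1\cdots}_{p+1}$ enters; you correctly flag this as the delicate step and defer it to the cited appendix. Since the paper defers the entire statement to that same reference, your proposal is, if anything, more complete than what the paper provides.
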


$G(r,p,n)$ may be viewed as the group consisting of all $n\times n$ monomial matrices such that each non-zero entry is an $r^{th}$ root of unity and the product of
the non-zero entries is a $d^{th}$ root of unity, where
 $d=\frac{r}{p}$. More precisely, let $\zeta \in \mathbb{C}^*$ be a primitive $r^{th}$ root of unity. The complex reflection group $G(r,p,n)$ is generated by the following elements:
\begin{align*}
	s_0&=\zeta^pE_{1,1}+\sum_{k=2}^{n}E_{k,k},\\
	s_1'&=\zeta E_{1,2}+\zeta^{-1}E_{2,1}+\sum_{k=3}^{n}E_{k,k}\\
	s_i&=\sum_{1\leq k\leq i-1}E_{k,k}+E_{i+1,i}+E_{i,i+1}+\sum_{i+2\leq k\leq n}E_{k,k} \text{ for all }1\leq i\leq n-1
\end{align*}
where $E_{i,j}$ is the elementary $n\times n$ matrix with $(i,j)$-entry equal to $1$ and all other entries equal to zero. Since the complex reflection group $G(r,1,n)$ 
is generated by $s_i(1\leq i\leq n-1)$ as well as the following matrix:
\begin{equation*}
    t_0=\zeta E_{1,1}+\sum_{k=2}^{n}E_{k,k},
\end{equation*}
the complex reflection group $G(r,p,n)$ is a normal subgroup of $G(r,1,n)$.

The Hecke algebra of type $G(r,p,n)$ is a deformation of its complex group ring. Following \cite{broue1994complex}, the Hecke algebra $H(r,p,n)$ may be defined as follows:
\begin{defn} (\cite{broue1994complex},Definition 4.21)\label{dfhrpn}
	Let $R$ be a commutative ring with 1 and $q,u_1,u_2,\dots,u_r\in R^*$.
	The Hecke algebra $H(r,p,n)$ is the unitary associative $R$-algebra generated by $S,T_1',T_1,T_2,\dots,T_{n-1}$ subject to the following relations:
		\begin{align*}
		(S-u_1)(S-u_2)\dots&(S-u_d)=0;\\
		(T_1'-q)(T_1'+1)=(T_i-q)&(T_i+1)=0 \text{  for }1\leq i\leq n-1;\\
		ST_1ST_1&=T_1ST_1S;\\
		ST_1'ST_1'&=T_1'ST_1'S;\\
		ST_1T_1'&=T_1T_1'S;\\
		ST_i=T_iS&\text{ for }i\geq 2;\\
		T_1'T_2T_1'&=T_2T_1'T_2;\\
		T_1'T_i=T_iT_1'&\text{ for } i\geq 3;\\
		T_iT_{i+1}T_i=T_{i+1}T_iT_{i+1} & \text{ for }i\geq 1;\\
		T_iT_j=T_jT_i  \text{ for }&|i-j|\geq 2 \text{ and }i,j\geq 1;\\
		\underbrace{ST_1T_1'T_1T_1'\dots}_{p+1} &= \underbrace{T_1'ST_1T_1'T_1\dots}_{p+1}.
	\end{align*}
\end{defn}
In analogy with the restrictions on the parameters in \cite{LL22}, we require that $R$ be a field of characteristic 0 with a $p^{th}$ primitive root of unity and $q,u_1,u_2,\dots,u_d\in R^*$ such that 
\begin{equation}\label{uures}
    \frac{u_i}{u_j}\neq 1,q\textbf{ or } q^2
\end{equation}
for all $i\neq j$ and
\begin{equation}
    (1+q)(1+q+q^2)\neq 0.
\end{equation}
Moreover, we assume that there exists $w_i\in R$ such that $w_i^p=u_i$ for $i=1,2,\dots,d$. 

We see from their matrix realisations that $G(r,p,n)$ is a subgroup of $G(r,1,n)$. It is a natural question to ask whether there is a connection between the two Hecke algebras.
To review the construction of the Hecke algebra of $G(r,p,n)$ by Ariki in \cite{ARIKI1995164} which indicates the connection between $H(r,1,n)$ and $H(r,p,n)$, we recall the definition of $H(r,1,n)$.

\begin{defn}\label{Hr}(\cite{broue1994complex},Definition 4.21)
	Let $R$ be as defined above and $q,v_1,v_2,\dots,v_r\in R^*$. The cyclotomic Hecke algebra $H(r,1,n)$ corresponding to $G(r,1,n)$ over $R$ is the associative algebra 
	generated by $T_0,T_1,\dots,T_{n-1}$ subject to the  following relations:
	\begin{align}
		(T_0-v_1)(T_0-v_2)\dots (T_0-v_r)&=0 ;\label{ordering relation}\\
		(T_i-q)(T_i+1)&=0\textit{ for } 1\leq i\leq n-1;\label{quadra rlt}\\
		T_0T_1T_0T_1&=T_1T_0T_1T_0;\\
		T_iT_{i+1}T_i&=T_{i+1}T_iT_{i+1}\textit{ for }1\leq i\leq n-2;\\
		T_iT_j&=T_jT_i \textit{ for }|i-j|\geq 2.
		\end{align}
\end{defn} 
We remind readers that in the original definition of $H(r,1,n)$, the base field $R$ is only required as a commutative ring with unity. The restrictions on $R$ are for the following specialisation:

Let $\zeta\in R$ be a primitive root of unity of order $p$. Choose the parameters $v_1,v_2,\dots,v_r$ in $(\ref{ordering relation})$ so that the equation (2.6) takes  the following form:
\begin{equation} \label{eq8.3}
	\prod_{k=1}^{d}\prod_{i=1}^{p}(T_0-\zeta^i w_k)=0,
\end{equation}
where $w_k$ is an element in $R$ such that $w_k^p=u_k$.
This equation can be written as follows:
 \begin{equation}
 	\prod_{k=1}^{d}(T_0^p-u_k)=0.
 \end{equation}

Denote by $H_n(q,u_k)$ the algebra $H(r,1,n)$ specialised as above. Then we have an automorphism $\sigma:H_n(q,u_k) \mapsto H_n(q,u_k) $ such that:
\begin{equation}\label{eq:defsigma}
	\sigma(T_0)=\zeta T_0,\;\;\; \sigma(T_i)=T_i\text{ for all } 1\leq i\leq n-1.
\end{equation}

Ariki shows in \cite{ARIKI1995164}:
\begin{prop} \label{ariki1.6}
	(\cite{ARIKI1995164}, Proposition 1.6) The algebra homomorphism $\phi: H(r,p,n)\to H_n(q,u_i)$ given by:
	\begin{align*}
		\phi(S)&=T_0^p,\\
		\phi(T_1')&=T_0^{-1}T_1T_0,\\
		\phi(T_i)&=T_i \text{ for }1\leq i\leq n-1
	\end{align*}
    is well-defined and one-to-one. And $H(r,p,n)\cong \phi( H(r,p,n))$.
\end{prop}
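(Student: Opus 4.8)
The plan is to prove that the map $\phi$ is a well-defined algebra homomorphism and then establish its injectivity separately. For well-definedness, I would take each defining relation of $H(r,p,n)$ from Definition \ref{dfhrpn} and verify that the proposed images satisfy the same relation inside $H_n(q,u_i)$. Writing $\phi(S)=T_0^p$, $\phi(T_1')=T_0^{-1}T_1T_0$, and $\phi(T_i)=T_i$, most of these checks are routine. For example, the quadratic relation for $S$ becomes $\prod_{k=1}^{d}(T_0^p-u_k)=0$, which is exactly the specialisation (\ref{eq8.3}) rewritten as $\prod_{k=1}^{d}(T_0^p-u_k)=0$; the quadratic relation for $T_1'$ follows because conjugation by $T_0$ is an automorphism and $T_1$ satisfies (\ref{quadra rlt}); and the relations among the $T_i$ for $i\geq 1$ are immediate since $\phi$ fixes them. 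The braid and commutation relations involving $S$ and $T_1'$ require manipulating $T_0^p$ and $T_0^{-1}T_1T_0$, repeatedly invoking the length-four braid relation $T_0T_1T_0T_1=T_1T_0T_1T_0$ of Definition \ref{Hr}.

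The step I expect to be the main obstacle is the verification of the long ``$p+1$'' relation
\[
    \underbrace{ST_1T_1'T_1T_1'\dots}_{p+1} = \underbrace{T_1'ST_1T_1'T_1\dots}_{p+1},
\]
together with the relations $ST_1'ST_1'=T_1'ST_1'S$ and $ST_1T_1'=T_1T_1'S$. After substituting $\phi(S)=T_0^p$ and $\phi(T_1')=T_0^{-1}T_1T_0$, these become identities purely in $T_0$ and $T_1$, and the natural strategy is to reduce everything to the rank-two subalgebra generated by $T_0$ and $T_1$. The key computational fact is that the braid relation $T_0T_1T_0T_1=T_1T_0T_1T_0$ lets one move powers of $T_0$ past alternating products of $T_0^{-1}T_1T_0$ and $T_1$ in a controlled way; carefully tracking how conjugates $T_0^{-i}T_1T_0^{i}$ interact should collapse both sides of the long relation to a common expression. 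I would carry this out by induction on $p$, establishing an auxiliary identity describing how $T_0^p$ commutes past the alternating string.

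For injectivity, rather than a direct kernel computation I would use a dimension or basis argument. The cleanest approach is to identify $\phi(H(r,p,n))$ with the fixed-point subalgebra $H_n(q,u_k)^{\sigma}$ under the automorphism $\sigma$ with $\sigma(T_0)=\zeta T_0$ and $\sigma(T_i)=T_i$: since $\sigma(T_0^p)=\zeta^p T_0^p=T_0^p$ (as $\zeta$ has order $p$) and $\sigma(T_0^{-1}T_1T_0)=T_0^{-1}T_1T_0$, the image of $\phi$ lands in $H_n(q,u_k)^{\sigma}$. One then compares dimensions: $H(r,p,n)$ has rank $p^{n}n!$ as an $R$-module (the order of $G(r,p,n)$), while $H_n(q,u_k)$ has rank $r^{n}n!=p^n d^n n!$, and the fixed subalgebra of the order-$p$ action has the expected rank $p^{n-1}d^{n}n!$—but crucially one checks the image has the correct rank $p^n n!$ matching the source. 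Since $\phi$ is a surjection onto its image, matching ranks forces injectivity. I would invoke the known $R$-freeness of $H(r,p,n)$ and $H(r,1,n)$ on their standard bases (from \cite{broue1994complex} and \cite{ARIKI1995164}) to make this count rigorous, completing the proof that $H(r,p,n)\cong\phi(H(r,p,n))$.
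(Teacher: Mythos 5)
The paper itself offers no proof of this proposition: it is imported verbatim from \cite{ARIKI1995164}, so your proposal can only be measured against Ariki's original argument. Your plan for well-definedness (check every relation of Definition \ref{dfhrpn} on the proposed images, with the $(p+1)$-term relation as the main computation) is indeed how that argument begins, but be warned that the verification is not merely a matter of ``collapsing both sides'' with the braid relation: as printed, the formula $\phi(T_1')=T_0^{-1}T_1T_0$ and the relation $\underbrace{ST_1T_1'T_1T_1'\cdots}_{p+1}=\underbrace{T_1'ST_1T_1'T_1\cdots}_{p+1}$ are actually incompatible. Take $p=2$, $n=2$ and the admissible specialisation $q=1$, $\{u_k\}=\{d\text{-th roots of unity}\}$ (the group algebra of $G(r,1,2)$, allowed by (\ref{uures})): the images of the two sides are $\mathrm{diag}(\zeta^3,\zeta^{-1})$ and $\mathrm{diag}(\zeta^{-1},\zeta^3)$, $\zeta$ a primitive $r$-th root of unity, which differ whenever $r\nmid 4$. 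The paper's own matrix model shows $s_1'=t_0s_1t_0^{-1}$, so the check only closes after matching Ariki's conventions (e.g.\ $\phi(T_1')=T_0T_1T_0^{-1}$ for the relation as printed); any honest attempt at your ``routine'' verification would run into this.

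The genuine gap, however, is your injectivity argument. First, the numerology is wrong: the rank of $H(r,p,n)$ is $|G(r,p,n)|=r^nn!/p=p^{n-1}d^nn!$, not $p^nn!$, and your own (correct) figure $p^{n-1}d^nn!$ for the rank of $H_n(q,u_k)^{\sigma}$ contradicts your claim that the image, which you have just placed inside that fixed subalgebra, has rank $p^nn!$; those two numbers agree only in the coincidental case $p=d^n$. Second, and more fundamentally, the logic runs in the wrong direction: the containment $\phi(H(r,p,n))\subseteq H_n(q,u_k)^{\sigma}$ gives only an \emph{upper} bound on the rank of the image, whereas injectivity requires a \emph{lower} bound. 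The sentence ``one checks the image has the correct rank'' is precisely the content of the proposition; with the corrected counts it is equivalent to $\phi$ being surjective onto $H_n(q,u_k)^{\sigma}$, which is the (stronger) statement recorded separately as Proposition \ref{rosprop}, and you offer no method for it. Third, invoking the $R$-freeness of $H(r,p,n)$ ``from \cite{ARIKI1995164}'' is circular: in Ariki's paper that freeness is deduced from this very proposition, and in \cite{broue1994complex} freeness at this generality was conjectural. Ariki's actual proof avoids all of this: he produces from the defining relations an explicit spanning set of $H(r,p,n)$ of cardinality $r^nn!/p$ and shows its $\phi$-image is linearly independent because it sits inside the Ariki--Koike basis of $H(r,1,n)$; that single linear-independence step yields injectivity and freeness simultaneously, and it is exactly the step missing from your proposal.
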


Further, we have
\begin{prop} (\cite{Rostam_2018},Corollary 1.18) \label{rosprop}
	The algebra $H(r,p,n)$ is isomorphic to $H_n(q,u_k)^{\sigma}$ via $\phi$, where the latter algebra is the fixed point subalgebra of $H_n(q,u_k)$ under $\sigma$.
\end{prop}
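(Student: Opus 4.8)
The plan is to leverage Proposition~\ref{ariki1.6}, which already supplies an injective algebra homomorphism $\phi\colon H(r,p,n)\to H_n(q,u_k)$ with $H(r,p,n)\cong\phi(H(r,p,n))$; it therefore remains only to identify the image $\phi(H(r,p,n))$ with the fixed-point subalgebra $H_n(q,u_k)^{\sigma}$. One inclusion is a direct computation on generators, and the reverse inclusion I would obtain by a dimension count, using a grading that makes $\sigma$ transparent.

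First I would equip $H_n=H_n(q,u_k)$ with a $\mathbb{Z}/p\mathbb{Z}$-grading by declaring $\deg T_0=1$ and $\deg T_i=0$ for $1\le i\le n-1$. The point is that every defining relation of $H_n$ is homogeneous for this grading: the braid and quadratic relations among the $T_i$ have degree $0$, the relation $T_0T_1T_0T_1=T_1T_0T_1T_0$ has degree $2$ on both sides, and---crucially---the specialised relation $\prod_{k=1}^{d}(T_0^p-u_k)=0$ is a polynomial in $T_0^p$, hence homogeneous of degree $0$ modulo $p$. (This is exactly the feature that makes $\sigma$ well defined.) On a homogeneous element $x$ of degree $j$ one then has $\sigma(x)=\zeta^{j}x$, so $\sigma$ acts as the scalar $\zeta^{\deg}$ on each graded piece. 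Since $\zeta$ is a primitive $p$-th root of unity, $\zeta^{j}-1\neq 0$ for $1\le j\le p-1$, and writing an arbitrary element as the sum of its homogeneous components shows that it is $\sigma$-fixed if and only if all components of nonzero degree vanish. Hence $H_n^{\sigma}$ equals the degree-$0$ component $H_n^{[0]}$. Checking the generators, $\phi(S)=T_0^p$ has degree $p\equiv 0$, $\phi(T_1')=T_0^{-1}T_1T_0$ has degree $-1+0+1=0$ (note $T_0$ is invertible, as $\prod_k(-u_k)\neq 0$, and $T_0^{-1}$ is homogeneous of degree $-1$), and $\phi(T_i)=T_i$ has degree $0$; thus $\phi(H(r,p,n))\subseteq H_n^{[0]}=H_n^{\sigma}$.

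It then remains to compare dimensions. On the one hand $\dim_R H(r,p,n)=|G(r,p,n)|=r^n n!/p$. On the other hand I would use the Ariki--Koike basis $\{L_1^{c_1}\cdots L_n^{c_n}T_w:0\le c_i\le r-1,\ w\in\mathfrak{S}_n\}$, where $L_1=T_0$ and $L_{k+1}=T_kL_kT_k$; since each $L_k$ has degree $1$ and each $T_w$ has degree $0$, such a basis element lies in degree $c_1+\cdots+c_n\pmod p$. As $r=pd$, each residue modulo $p$ occurs exactly $d$ times among $0,1,\dots,r-1$, so the multiset of sums $c_1+\cdots+c_n$ is equidistributed modulo $p$; a short generating-function computation in $\mathbb{Z}[x]/(x^p-1)$ gives exactly $r^n/p$ tuples in each residue class, whence $\dim_R H_n^{[0]}=(r^n/p)\,n!=r^n n!/p$. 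Comparing with the previous paragraph, $\dim_R\phi(H(r,p,n))=\dim_R H(r,p,n)=r^n n!/p=\dim_R H_n^{[0]}=\dim_R H_n^{\sigma}$, and the inclusion $\phi(H(r,p,n))\subseteq H_n^{\sigma}$ of spaces of equal finite dimension is an equality. Together with the injectivity from Proposition~\ref{ariki1.6}, this proves that $\phi$ is an isomorphism onto $H_n^{\sigma}$.

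I expect the main obstacle to be the dimension bookkeeping rather than any conceptual difficulty: one must ensure that the chosen basis is genuinely homogeneous (so that the grading refines it) and that $\dim_R H(r,p,n)=|G(r,p,n)|$, which rests on the freeness of the cyclotomic Hecke algebra of type $G(r,p,n)$ over $R$. Given these inputs, the equidistribution argument and the resulting equality of dimensions are routine.
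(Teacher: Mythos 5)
Your argument is correct, but note that the paper itself gives no proof of this proposition: it is imported verbatim from Rostam (Corollary 1.18 of \cite{Rostam_2018}), who in turn builds on Ariki, so there is no internal argument to compare against. Your route --- putting the $\mathbb{Z}/p\mathbb{Z}$-grading on $H_n(q,u_k)$ with $\deg T_0=1$, observing that the specialised cyclotomic relation $\prod_{k}(T_0^p-u_k)=0$ is homogeneous (which is exactly the feature that makes $\sigma$ well defined), identifying $H_n(q,u_k)^{\sigma}$ with the degree-zero component over a field containing a primitive $p$-th root of unity, and then matching dimensions against $\phi(H(r,p,n))$ --- is essentially a reconstruction of the standard Ariki/Rostam argument, and each step checks out: the grading descends to the quotient because every defining relation is homogeneous; $T_0^{-1}$ is indeed homogeneous of degree $-1$ (it is an $R$-combination of $T_0^{pk-1}$'s, by the cyclotomic relation with nonzero constant term); the Ariki--Koike basis $\{L_1^{c_1}\cdots L_n^{c_n}T_w\}$ is homogeneous of degree $\sum_i c_i$; and the count of $r^n/p$ tuples per residue class is right since $r=pd$. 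The one point requiring citation discipline is your pair of external inputs: the Ariki--Koike basis theorem for $H(r,1,n)$ is unproblematic, but the freeness statement $\dim_R H(r,p,n)=|G(r,p,n)|=r^n n!/p$ is proved by Ariki in \cite{ARIKI1995164} essentially in the same breath as the injectivity of $\phi$ (a spanning set of size $r^n n!/p$ is shown to map to linearly independent elements of $H_n(q,u_k)$, yielding freeness and injectivity simultaneously), so you should cite it as part of the same package as Proposition \ref{ariki1.6} rather than treat it as an independent downstream fact, to avoid any appearance of circularity. With that caveat, your proof is complete and is, if anything, more self-contained than the paper's treatment, which simply quotes the result; what the citation buys the paper is brevity, and what your argument buys is a transparent explanation of why the fixed-point subalgebra is exactly the image of $\phi$, namely that both coincide with the degree-zero piece of a natural $\mathbb{Z}/p\mathbb{Z}$-grading.
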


This proposition implies that the cyclotomic Hecke algebra $H(r,p,n)$ can be realised as the subalgebra of a specialisation of the cyclotomic Hecke algebra $H(r,1,n)$, consisting of the points fixed by the automorphism $\sigma$ defined in (\ref{sigma}). In analogy with this situation, we will define the generalised Temperley-Lieb algebra $TL_{r,p,n}$ as a  fixed point subalgebra of $TL_{r,1,n}$ under $\sigma$ in Section \ref{subsec:813}.

In the remaining part of this section, we interpret the automorphism $\sigma$ in the language of quivers which are used to define a KLR algebra, 
from which we obtain the cellularity of $TL_{r,p,n}$. This interpretation was first discussed by Rostam in Section 3.1 in \cite{Rostam_2018}.

Let $q\in R^*$ be the parameter in definition \ref{dfhrpn} and $e$ be the smallest positive integer such that $1+ q + \dots+ q ^{e-1} = 0$, setting $e := 0$ 
if no such integer exists. Let $\Gamma_{e,p}$ be the quiver with vertex set $K:=I\times J$ where $I= \mathbb{Z}/p\mathbb{Z}$ and $J= \mathbb{Z}/e\mathbb{Z}$, 
and with a directed edge from $(i_1,j_1)$ to $(i_1,j_1+1)$. As there is no edge between $(i,j)$ and $(i',j')$ if $i\neq i'$ in the quiver $\Gamma_{e,p}$, 
it decomposes as $p$ layers which are isomorphic to each other. For example, $\Gamma_{0,3}$ consists of three layers, each of them is a quiver of type $A_{\infty}$,
as depicted below.

\begin{tikzcd}
  \Gamma_{0,3}   :   &  \dots    \arrow[r ] & (0,-1)\arrow[r ] &   (0,0)  \arrow[r ] &   (0,1)   \arrow[r ] &    (0,2) \arrow[r ] &    \dots\\
 &  \dots    \arrow[r ] & (1,-1)\arrow[r ] &   (1,0)  \arrow[r ] &   (1,1)   \arrow[r ] &    (1,2) \arrow[r ] &    \dots\\
 &  \dots    \arrow[r ] & (2,-1)\arrow[r ] &   (2,0)  \arrow[r ] &   (2,1)   \arrow[r ] &    (2,2) \arrow[r ] &    \dots
 \end{tikzcd}

Let $(c_{(i,j)(i',j')})$ be the Cartan matrix associated with $\Gamma_{e,p}$, so that
\begin{gather} \label{216}
	c_{(i,j)(i',j')}=
	\begin{cases}
		2 &\textit{ if } (i,j)=(i',j');\\
		-1 & \textit{ if } e\neq 2,i=i'\textit{ and } j=j'\pm1;\\
		-2 & \textit{ if } e= 2, i=i'\textit{ and } j=j'\pm1;\\
		0 &\textit{ otherwise.} 
	\end{cases}
\end{gather}
Let $\{\alpha_{(i,j)}|(i,j)\in K\}$ be the associated set of simple roots and $\{\Lambda_{(i,j)}|(i,j)\in K\}$ be the set of fundamental weights. Let $(-,-)$ be the bilinear form determined by
\begin{gather*}
	(\alpha_{(i,j)},\alpha_{(i',j')})=c_{(i,j)(i',j')} \textit{ and }(\Lambda_{(i,j)},\alpha_{(i',j')})=\delta_{(i,j)(i',j')}.
\end{gather*}

Let $P_+=\oplus_{(i,j)\in K}\mathbb{N}\Lambda_{(i,j)}$ be the dominant weight lattice and  $Q_+=\oplus_{(i,j)\in K}\mathbb{N}\alpha_{(i,j)}$ be the positive root lattice. For $\Lambda\in P_+$, define the 
length of $\Lambda$, $$l(\Lambda):=\sum_{(i,j)\in K}(\Lambda,\alpha_{(i,j)}),$$ and for $\alpha\in Q_+$, the height of $\alpha$, $ht(\alpha):=\sum_{(i,j)\in K}(\alpha,\Lambda_{(i,j)})$. 

Fix a dominant weight $\Lambda$ such that $l(\Lambda)=r$ and $(\Lambda,\alpha_{(i,j)})=(\Lambda,\alpha_{(i',j)})$ for all $i,i'\in I\text{ and }j\in J$.
Let $\zeta$ be a $p^{th}$ primitive root of unity and choose the parameters $v_1,v_2,\dots,v_r$ so that the equation $(\ref{ordering relation})$ becomes:
\begin{equation} \label{domwt}
	\prod_{i\in I}\prod_{j\in J}(T_0-\zeta^iq^j)^{(\Lambda,\alpha_{(i,j)})}=0,
\end{equation}
 which, due to our choice of $\Lambda$, can be written as
 \begin{equation}
 	\prod_{j\in J}(T_0^p-q^{pj})^{(\Lambda,\alpha_{(i,j)})}=0.
 \end{equation}

Denote by $H_n^{\Lambda}(q,\zeta)$ the algebra $H(r,1,n)$ specialised as above. Then $H_n^{\Lambda}(q,\zeta)$ can be also regarded as a specialisation of $H_n(q,u_k)$ which is given by $(\ref{eq8.3})$ with $w_k=q^{j_k}$ for some $j_k\in J$. Then we have an automorphism $\sigma:H_n^{\Lambda}(q,\zeta) \mapsto H_n^{\Lambda}(q,\zeta) $ such that:
\begin{equation} \label{sigma}
	\sigma(T_0)=\zeta T_0, \sigma(T_i)=T_i\text{ for all } 1\leq i\leq n-1.
\end{equation}

Similarly, choose the parameters $u_1,\dots,u_d$ such that the first equation in Definition \ref{dfhrpn} becomes
\begin{equation}
	\prod_{j\in J}(S-q^{pj})^{(\Lambda,\alpha_{(i,j)})}=0
\end{equation}
and denote by $H_{p,n}^{\Lambda}(q)$ the new algebra with this specialisation. 
As a direct consequence of Proposition \ref{rosprop}, the algebra $H_{p,n}^{\Lambda}(q)$ is isomorphic to $H_n^{\Lambda}(q,\zeta)^{\sigma}$ via the automorphism $\phi$ given in Proposition \ref{ariki1.6}.

We next recall the definition of the cyclotomic KLR algebra associated with the quiver $\Gamma_{e,p}$ and the weight $\Lambda$. The following definition is originally from the work of Khovanov and Lauda in \cite{khovanov2008diagrammatic} and Rouquier in \cite{rouquier20082kacmoody}. We shall use some of the notation introduced here in our exposition below.

\begin{defn} $\label{klrdf}$
	The cyclotomic KLR algebra associated with the weight $\Lambda$ is defined as $\mathcal{R}_n^{\Lambda}:=\oplus_{ht(\alpha)=n}\mathcal{R}_{\alpha}^{\Lambda}$, where $\mathcal{R}_{\alpha}^{\Lambda}$ is the unital associative $F$-algebra with generators
	\begin{equation}
		\{\psi_1,\dots,\psi_{n-1}\}\cup\{y_1,y_2,\dots,y_n\}\cup \{e(\mathbf{i})|\mathbf{i}\in K^{\alpha}\}
	\end{equation}
	subject to  the following relations
	\begin{equation}
		y_1^{(\Lambda,\alpha_{\mathbf{i}_1})}e(\mathbf{i})=0,
	\end{equation}
	\begin{equation}
		e(\mathbf{i})e(\mathbf{i}')=\delta_{\mathbf{i}\mathbf{i}'}e(\mathbf{i}), \label{224eq}
	\end{equation}
	\begin{equation}
		\sum_{\mathbf{i}\in K^{\alpha}}e(\mathbf{i})=1,
	\end{equation}
	\begin{equation} \label{18eq}
		e(\mathbf{i})y_r=y_r e(\mathbf{i}),
	\end{equation} 
	\begin{equation} 
		\psi_r e(\mathbf{i})=e(s_r\circ \mathbf{i})\psi_r \label{sym},
	\end{equation} 
	\begin{equation}
		y_r y_s=y_sy_r,
	\end{equation} 
	\begin{equation}\label{21eq}
		\psi_r y_s=y_s \psi_r \textit{ if }s\neq r,r+1;
	\end{equation}
	\begin{equation} \label{22eq}
		\psi_r \psi_s=\psi_s \psi_r \textit{ if }|r-s|\neq 1;
	\end{equation}
	\begin{gather} \label{23eq}
		\psi_r y_{r+1}e(\mathbf{i})=
		\begin{cases}
			y_r \psi_r e(\mathbf{i})+e(\mathbf{i}) &\textit{ if } \mathbf{i}_r= \mathbf{i}_{r+1};\\
			y_r \psi_r e(\mathbf{i}) &\textit{ if } \mathbf{i}_r\neq \mathbf{i}_{r+1};
		\end{cases}
	\end{gather}
	\begin{gather} \label{24eq}
		y_{r+1} \psi_r e(\mathbf{i})=
		\begin{cases}
			\psi_r y_r e(\mathbf{i})+e(\mathbf{i}) &\textit{ if } \mathbf{i}_r= \mathbf{i}_{r+1};\\
			\psi_r y_r e(\mathbf{i}) &\textit{ if } \mathbf{i}_r\neq \mathbf{i}_{r+1};
		\end{cases}
	\end{gather}
	\begin{gather} \label{25eq}
		\psi_r^2e(\mathbf{i})=
		\begin{cases}
			0 &\textit{ if } \mathbf{i}_r= \mathbf{i}_{r+1};\\
			e(\mathbf{i}) &\textit{ if } \mathbf{i}_r\neq \mathbf{i}_{r+1}\pm 1;\\
			(y_{r+1}-y_r)e(\mathbf{i}) &\textit{ if } \mathbf{i}_r\rightarrow \mathbf{i}_{r+1};\\
			(y_r-y_{r+1})e(\mathbf{i}) &\textit{ if } \mathbf{i}_r\leftarrow \mathbf{i}_{r+1};\\
			(y_{r+1}-y_r)(y_r-y_{r+1})e(\mathbf{i}) &\textit{ if } \mathbf{i}_r\leftrightarrow \mathbf{i}_{r+1}.
		\end{cases}
	\end{gather}
	\begin{gather} \label{26eq}
		(\psi_r\psi_{r+1}\psi_r-\psi_{r+1}\psi_r\psi_{r+1})e(\mathbf{i})=
		\begin{cases}
			e(\mathbf{i}) &\textit{ if } \mathbf{i}_{r+2}=\mathbf{i}_r\rightarrow \mathbf{i}_{r+1};\\
			-e(\mathbf{i}) &\textit{ if } \mathbf{i}_{r+2}=\mathbf{i}_r\leftarrow \mathbf{i}_{r+1};\\
			(y_r-2y_{r+1}+y_{r+2})e(\mathbf{i}) &\textit{ if } \mathbf{i}_{r+2}=\mathbf{i}_r\leftrightarrow \mathbf{i}_{r+1};\\
			0 &\textit{ otherwise } .
		\end{cases}
	\end{gather}
	where $s_r\circ i$ in (\ref{sym}) is the natural action of $\mathfrak{S}_n$ on $I^n$ and the arrows in (\ref{25eq}) and (\ref{26eq}) are those in the quiver $\Gamma$.
\end{defn}

Note that all the relations above are homogeneous with respect to the following degree function on the generators:
\begin{equation}\label{degf}
	\text{deg } e(\mathbf{i})=0,\text{deg } y_r=2\text{ and  deg }\psi_se(\mathbf{i})=-c_{\mathbf{i}_s,\mathbf{i}_{s+1}}
\end{equation}
where $\mathbf{i}\in K^n$,$1\leq r\leq n$ and $1\leq s\leq n-1$. Therefore, the cyclotomic KLR algebra defined above is $\mathbb{Z}$-graded with respect to the degree function in (\ref{degf}).

We next recall the interpretation of the automorphism $\sigma$ as that of the cyclotomic KLR algebra by Rostam in \cite{Rostam_2018}.
Let $\sigma_0:\Gamma_{e,p}\mapsto \Gamma_{e,p}$ be the quiver automorphism given by
\begin{equation*}
	\sigma_0((i,j))=(i-1,j)
\end{equation*}
and $\sigma_1$ be the map on $K^n$ induced by $\sigma_0$, that is the map such that
\begin{equation}\label{8.14}
	\sigma_1(\mathbf{i})_l=\sigma_0(\mathbf{i}_l)
\end{equation}
for all $\mathbf{i}\in K^n$ and $1\leq l\leq n$.
Let $\sigma'$ be the algebra automorphism of the cyclotomic KLR algebra $R_n^{\Lambda}(\Gamma_{e,p})$ given, in terms of the presentation \cite[Def. 2.27]{LL22} by 
\begin{align*}
	\sigma'(e(\mathbf{i}))&=e(\sigma_1(\mathbf{i}));\\
	\sigma'(y_i)=y_i &\text{  for }1\leq i\leq n\\
	\sigma'(\psi_i)=\psi_i &\text{  for } 1\leq i\leq n-1.
\end{align*}

Then we have
\begin{thm} $\label{auto}$
	(\cite{Rostam_2018},Theorem 4.14) 
	Let $f:R_n^{\Lambda}(\Gamma_{e,p})\to H_n^{\Lambda}(q,\zeta)$ be the isomorphism between the cyclotomic KLR algebra and Hecke algebra given in Theorem 3.17 in \cite{Rostam_2018} and $\sigma$ be the isomorphism of $H_n^{\Lambda}(q,\zeta)$ defined in (\ref{sigma}). 
	Let $\sigma'$ be the isomorphism of $R_n^{\Lambda}(\Gamma_{e,p})$ defined above. Then we have $f\circ \sigma'=\sigma\circ f$.
\end{thm}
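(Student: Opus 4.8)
The plan is to verify $f\circ\sigma' = \sigma\circ f$ on a generating set, since both sides are algebra homomorphisms $R_n^{\Lambda}(\Gamma_{e,p})\to H_n^{\Lambda}(q,\zeta)$. The cyclotomic KLR algebra is generated by the idempotents $e(\mathbf{k})$ ($\mathbf{k}\in K^n$), the elements $y_1,\dots,y_n$ and the elements $\psi_1,\dots,\psi_{n-1}$, so it is enough to check
\begin{equation*}
	f(\sigma'(g)) = \sigma(f(g))
\end{equation*}
for $g$ in each of these three families. First I would recall the explicit form of the Brundan--Kleshchev isomorphism $f$ from \cite{Brundan_2009}: writing $L_1 = T_0$ and $L_{r+1} = T_r L_r T_r$ (up to a power of $q$) for the Jucys--Murphy elements, $f(e(\mathbf{k}))$ is the spectral idempotent projecting onto the simultaneous generalised eigenspace on which each $L_r$ has eigenvalue $\zeta^{i_r}q^{j_r}$, where $\mathbf{k}_r = (i_r,j_r)$, while $f(y_r)$ and $f(\psi_s)$ are the associated normalised Jucys--Murphy expressions and intertwiners, each weighted by these idempotents.

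The central computation is the effect of $\sigma$ on the Jucys--Murphy elements. Since $\sigma(T_0)=\zeta T_0$ and $\sigma(T_i)=T_i$ for $i\geq 1$, an immediate induction from $L_{r+1}=T_rL_rT_r$ gives $\sigma(L_r)=\zeta L_r$ for all $r$. Because $\sigma$ is an algebra automorphism, applying it to $(L_r-\lambda)^N e_\lambda = 0$ shows $(L_r - \zeta^{-1}\lambda)^N\sigma(e_\lambda)=0$, so $\sigma$ sends the generalised $\lambda$-eigenspace idempotent of $L_r$ to the $\zeta^{-1}\lambda$-eigenspace idempotent. As $\zeta^{-1}\cdot\zeta^{i}q^{j}=\zeta^{i-1}q^{j}$, this is precisely the residue shift $(i,j)\mapsto(i-1,j)=\sigma_0(i,j)$. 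Hence $\sigma$ permutes the spectral idempotents according to $\sigma_1$, giving $\sigma(f(e(\mathbf{k}))) = f(e(\sigma_1(\mathbf{k}))) = f(\sigma'(e(\mathbf{k})))$, which settles the idempotent case.

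For $y_r$ and $\psi_s$, where $\sigma'$ acts trivially, I must show that $f(y_r)$ and $f(\psi_s)$ are fixed by $\sigma$. The mechanism is that the scalar $\zeta$ acquired by each $L_r$ is exactly absorbed by the shifted eigenvalue attached to the image idempotent: in $f(y_r)$, which has the schematic form $\sum_{\mathbf{k}}(1-(\zeta^{i_r}q^{j_r})^{-1}L_r)e(\mathbf{k})$ (and analogously for the intertwiners defining $f(\psi_s)$, which also involve the $\sigma$-fixed $T_s$), the factor $(\zeta^{i_r}q^{j_r})^{-1}\zeta$ equals $(\zeta^{i_r-1}q^{j_r})^{-1}$, which is the inverse eigenvalue recorded by $e(\sigma_1(\mathbf{k}))$; reindexing the sum by the bijection $\mathbf{k}\mapsto\sigma_1(\mathbf{k})$ then returns $f(y_r)$. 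Thus $\sigma(f(y_r))=f(y_r)=f(\sigma'(y_r))$ and likewise for $\psi_s$. The reason this works is structural: $\Gamma_{e,p}$ has edges only within layers of fixed $I$-coordinate, so the data seen by the $y$'s and $\psi$'s are differences of eigenvalues inside a single layer, and these are invariant under the common rescaling by $\zeta$, whereas the $I$-coordinate labelling the idempotents is permuted by $\sigma_1$.

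The main obstacle is this last step: one must unwind the Brundan--Kleshchev formulae for $f(y_r)$ and $f(\psi_s)$ carefully enough to confirm that the common $\zeta$-scaling of the Jucys--Murphy elements cancels out of every coefficient once the idempotents are shifted by $\sigma_1$, so that these images are genuinely $\sigma$-invariant. With the idempotent case and this invariance in hand, agreement on all three families of generators yields $f\circ\sigma'=\sigma\circ f$.
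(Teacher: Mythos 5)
The paper does not actually prove this statement---it is imported verbatim from Rostam (\cite{Rostam_2018}, Theorem 4.14)---and your argument is essentially a reconstruction of the proof in that cited source: verify the intertwining relation on the KLR generators via the explicit Brundan--Kleshchev formulas, using $\sigma(L_r)=\zeta L_r$ to show the spectral idempotents are permuted according to $\sigma_1$, and the invariance of eigenvalue ratios within each layer of $\Gamma_{e,p}$ (together with $\sigma_0$ being a quiver automorphism) to show $f(y_r)$ and $f(\psi_s)$ are $\sigma$-fixed. Your outline is correct and follows the same route; what remains is only the bookkeeping you yourself flag, namely unwinding the power-series coefficients in the $f(\psi_s)$ formulas to confirm they depend on the residues only through quiver adjacency and eigenvalue ratios, both of which are preserved by the shift.
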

The following corollary is a direct consequence of this theorem:

\begin{cor}
	The Hecke algebra $H_{p,n}^{\Lambda}(q)$ is isomorphic to $R_n^{\Lambda}(\Gamma_{e,p})^{\sigma}$, the fixed point subalgebra of $R_n^{\Lambda}(\Gamma_{e,p})$ under $\sigma$.
\end{cor}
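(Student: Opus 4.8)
The plan is to obtain the corollary by transporting the fixed-point description of $H_{p,n}^{\Lambda}(q)$ across the KLR--Hecke isomorphism $f$. Two inputs are already in hand. First, specialising Proposition \ref{rosprop} to the weight $\Lambda$ chosen above identifies $H_{p,n}^{\Lambda}(q)$ with the fixed-point subalgebra $H_n^{\Lambda}(q,\zeta)^{\sigma}$ via the embedding $\phi$ of Proposition \ref{ariki1.6}; this is exactly the statement recorded immediately after the defining display for $H_{p,n}^{\Lambda}(q)$. Second, Theorem \ref{auto} supplies the intertwining relation $f\circ\sigma'=\sigma\circ f$, where $f\colon R_n^{\Lambda}(\Gamma_{e,p})\to H_n^{\Lambda}(q,\zeta)$ is an isomorphism and $\sigma'$ is the KLR automorphism described before that theorem. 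The symbol $\sigma$ appearing in the statement on the KLR side should be read as $\sigma'$, equivalently as $f^{-1}\sigma f$; these define the same subalgebra of $R_n^{\Lambda}(\Gamma_{e,p})$.

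The one substantive step is to check that an algebra isomorphism intertwining two automorphisms restricts to an isomorphism of their fixed-point subalgebras. Concretely, I would argue as follows. If $x\in R_n^{\Lambda}(\Gamma_{e,p})$ satisfies $\sigma'(x)=x$, then $\sigma(f(x))=f(\sigma'(x))=f(x)$ by Theorem \ref{auto}, so $f(x)\in H_n^{\Lambda}(q,\zeta)^{\sigma}$; thus $f$ maps $R_n^{\Lambda}(\Gamma_{e,p})^{\sigma'}$ into $H_n^{\Lambda}(q,\zeta)^{\sigma}$. Conversely, given $y\in H_n^{\Lambda}(q,\zeta)^{\sigma}$, write $y=f(x)$ using surjectivity of $f$; then $f(\sigma'(x))=\sigma(f(x))=\sigma(y)=y=f(x)$, and injectivity of $f$ forces $\sigma'(x)=x$, so $x\in R_n^{\Lambda}(\Gamma_{e,p})^{\sigma'}$. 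Hence $f$ restricts to a bijective algebra homomorphism from $R_n^{\Lambda}(\Gamma_{e,p})^{\sigma'}$ onto $H_n^{\Lambda}(q,\zeta)^{\sigma}$.

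Composing this restriction with the isomorphism $H_{p,n}^{\Lambda}(q)\cong H_n^{\Lambda}(q,\zeta)^{\sigma}$ from the first input then yields $H_{p,n}^{\Lambda}(q)\cong R_n^{\Lambda}(\Gamma_{e,p})^{\sigma'}$, which is the assertion. I do not expect any genuine obstacle here: the entire argument is formal once Theorem \ref{auto} is granted, and the only points requiring care are the use of injectivity of $f$ to pull fixed points back in the converse direction, and the harmless identification of the KLR-side automorphism in the statement with $\sigma'$.
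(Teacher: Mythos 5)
Your proposal is correct and follows exactly the route the paper intends: the paper states the corollary is a direct consequence of Theorem \ref{auto} together with the identification $H_{p,n}^{\Lambda}(q)\cong H_n^{\Lambda}(q,\zeta)^{\sigma}$ noted after Proposition \ref{rosprop}, and your write-up simply makes explicit the standard verification that an intertwining isomorphism restricts to an isomorphism of fixed-point subalgebras. Your identification of the KLR-side $\sigma$ with $\sigma'$ also matches the paper's remark that the two automorphisms are not distinguished.
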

We shall not distinguish below between the two automorphisms $\sigma$ and $\sigma'$.

\section{The generalised Temperley-Lieb algebra $TL_{r,1,n}$}

In this section, we recall the definition of the generalised Temperley-Lieb algebra $TL_{r,1,n}$ constructed in \cite{LL22}. Our new generalised Temperley-Lieb algebra $TL_{r,p,n}$ will later be defined as the subalgebra of $TL_{r,1,n}$ fixed by automorphism induced by $\sigma$.

Let $H(r,1,n)$ be the cyclotomic Hecke algebra defined in Definition $\ref{Hr}$ and $H_{i,i+1}$ be the subalgebra of $H(r,1,n)$ which is generated by two non-commuting generators, $T_{i}$ and $T_{i+1}$ where $0\leq i\leq n-2$. We call $H_{i,i+1}$ a parabolic subalgebra of $H(r,1,n)$. The parabolic subalgebra $H_{0,1}$ has $2r$ 1-dimensional representations corresponding to the multipartitions $(0,0,\dots,(2),\dots,0)$ and $(0,0,\dots,(1,1),\dots,0)$ where $(2)$ and $(1,1)$ are in the $j^{th}$ component of the $r$-partition with $1\leq j \leq r$. Denote by $E_0^{(j)}$ and $F_0^{(j)}$ the corresponding primitive central idempotents in $H_{0,1}$. These exist because of our choice of parameters, which implies semisimplicity. 

 For $i$ such that $1\leq i\leq n-2$, the parabolic subalgebra $H_{i,i+1}$ is a Hecke algebra of type $A_2$. It has two 1-dimensional representations whose corresponding primitive central idempotents are
\begin{equation*}
  \begin{aligned}
	E_i&=a(T_iT_{i+1}T_i+T_iT_{i+1}+T_{i+1}T_i+T_i+T_{i+1}+1);\\
	F_i&=b(T_iT_{i+1}T_i-qT_iT_{i+1}-qT_{i+1}T_i+q^2T_i+q^2T_{i+1}-q^3),
\end{aligned}  
\end{equation*}

where $a=(q^3+2q^2+2q+1)^{-1}$ and $b=-a$. The generalised Temperley-Lieb algebra $TL_{r,1,n}$ is defined as the quotient of $H(r,1,n)$ by the two-sided ideal generated by half of the central idempotents listed above; specifically, the definition is as follows.

\begin{defn}\label{TL} (\cite{LL22}, Definition 3.3)
	Let $H(r,1,n)$ be the cyclotomic Hecke algebra defined in \ref{Hr} and $E_i$ ($E_i^{(j)}$, if $i=0$) be the primitive central idempotents of $H_{i,i+1}$ listed above. 
	The generalised Temperley-Lieb algebra $TL_{r,1,n}$ is
	\begin{equation*}
		TL_{r,1,n}:=H(r,1,n)/\langle E_0^{(1)},\dots,E_0^{(r)},E_1,\dots,E_{n-2}\rangle.
	\end{equation*}
    
\end{defn}

According to Theorem 1.1 in \cite{Brundan_2009}, the Hecke algebra $H(r,1,n)$ is isomorphic to a cyclotomic KLR algebra $\mathcal{R}_n^{\Lambda}$ with $\Lambda$ a dominant weight determined by the parameters $q,v_1,v_2,\dots,v_r$.
Therefore, we have the following alternative definition of $TL_{r,1,n}$ as a quotient of the cyclotomic KLR algebra $\mathcal{R}_n^{\Lambda}$:

\begin{thm} $\label{dftl}$ \cite[Theorem 3.24]{LL22}
    Let $TL_{r,1,n}$ be the generalised Temperley-Lieb algebra in Definition \ref{TL} and $\mathcal{R}_n^{\Lambda}$ be the cyclotomic KLR algebra isomorphic to the corresponding Hecke algebra $H(r,1,n)$.
	 Then
	\begin{equation}
		TL_{r,1,n}\cong  \mathcal{R}_n^{\Lambda}/\mathcal{J}_{n},
	\end{equation} 
	
	where the ideal $\mathcal{J}_n$ is defined by
{ \begin{equation}\label{ieq}
 	\begin{aligned}
 		\mathcal{J}_{n}(\Lambda)=&\sum_{\mathbf{i}\in I^{n},\atop\mathbf{i}_2=\mathbf{i}_1+1,(\alpha_{\mathbf{i}_1},\Lambda)>0}\langle  e(\mathbf{i}) \rangle_{\mathcal{R}_{n}^{\Lambda}}\\
&+\sum_{\mathbf{i}\in I^{n},\atop(\alpha_{\mathbf{i}_j},\Lambda)>0\text{ for }j=1,2,3,\mathbf{i}_j\neq \mathbf{i}_k \textit{ for }j\neq k}\langle  e(\mathbf{i}) \rangle_{\mathcal{R}_{n}^{\Lambda}},
 	\end{aligned}	
 \end{equation}
 where $e(\mathbf{i})$ is the corresponding KLR generator of $\mathcal{R}_n^{\Lambda}$ (cf. Definition \ref{klrdf}).}
\end{thm}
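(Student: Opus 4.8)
The plan is to deduce the isomorphism from the Brundan--Kleshchev isomorphism $f\colon \mathcal{R}_n^{\Lambda}\to H(r,1,n)$ of \cite{Brundan_2009}, by showing that $f$ carries the ideal $\mathcal{J}_n$ onto the defining ideal of $TL_{r,1,n}$; that is, I would prove the equality of two-sided ideals $f(\mathcal{J}_n)=\langle E_0^{(1)},\dots,E_0^{(r)},E_1,\dots,E_{n-2}\rangle$ inside $H(r,1,n)$. Passing to quotients then yields $TL_{r,1,n}=H(r,1,n)/\langle E_0^{(1)},\dots,E_{n-2}\rangle\cong \mathcal{R}_n^{\Lambda}/\mathcal{J}_n$. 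Thus the theorem reduces to identifying, for each generating idempotent, the two-sided ideal it generates in terms of the KLR idempotents $e(\mathbf{i})$.

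First I would reduce to a computation inside the parabolic subalgebras. Because $\{e(\mathbf{i})\}$ is a complete set of orthogonal idempotents with $\sum_{\mathbf{i}}e(\mathbf{i})=1$, each generator decomposes as $E=\sum_{\mathbf{i},\mathbf{i}'}e(\mathbf{i})\,E\,e(\mathbf{i}')$, and since $E_i$ lies in the parabolic $H_{i,i+1}$ it is a polynomial in $T_i,T_{i+1}$ whose $\mathbf{i}$-support is stable under permuting the entries in positions $i,i+1,i+2$; similarly $E_0^{(j)}\in H_{0,1}$ has support governed by the entries in positions $1,2$. The next step is to determine these supports. Each parabolic is semisimple under the hypothesis (\ref{uures}), so I would use the classification of its one-dimensional modules: $E_i$ is the projector onto the trivial module of the type-$A_2$ Hecke algebra, on which $T_i,T_{i+1}$ act by $q$, while $E_0^{(j)}$ is the projector onto the one-dimensional module labelled by the two-box row $(0,\dots,(2),\dots,0)$ in component $j$. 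Matching these projectors against the KLR weight spaces, I expect to show that $E_i\,e(\mathbf{i})\neq 0$ precisely when the three residues of $\mathbf{i}$ in positions $i,i+1,i+2$ are pairwise distinct (each with $(\alpha,\Lambda)>0$), and that $E_0^{(j)}\,e(\mathbf{i})\neq 0$ precisely when the residues in positions $1,2$ form the consecutive pair $(i,i+1)$ carried by a two-box row in component $j$. These are exactly the index sets appearing in the second and first sums defining $\mathcal{J}_n$.

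Finally I would upgrade these support statements to the required equality of two-sided ideals: for one inclusion, each generator $e(i_1,i_2,i_3,\mathbf{i})$ (resp.\ $e(i,i+1,\mathbf{i})$) of $\mathcal{J}_n$ is recovered from some $E_i$ (resp.\ $E_0^{(j)}$) after multiplication by suitable $\psi$- and $T$-elements that transport the distinguished strands into positions $1,2,3$; for the reverse inclusion, the support computation writes each $E$ as an element of the ideal generated by the listed idempotents. The main obstacle will be the precise support computation of the classical idempotents under $f$: the preimages $f^{-1}(T_i)$ are not simply the $\psi_i$ but intricate combinations of $\psi_i$, $y_i$ and the $e(\mathbf{i})$, so pinning down that the trivial $A_2$-module contributes to a weight space exactly when the three residues are \emph{distinct} --- rather than only for a literal consecutive row $c,c+1,c+2$ --- is the delicate point. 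It is here that the semisimplicity of the parabolics and the hypotheses (\ref{uures}) and $(1+q)(1+q+q^2)\neq 0$ are essential, in particular to control the degenerate small-$e$ cases ($e=2,3$) where residues on genuine rows can coincide.
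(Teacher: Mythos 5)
First, a remark on scope: the paper you are reading does not prove this statement at all --- it is imported verbatim from \cite[Theorem 3.24]{LL22} --- so the comparison here is with the proof that result requires (and receives in \cite{LL22}), which indeed runs through the Brundan--Kleshchev isomorphism and an equality of two-sided ideals, as you propose. Your overall frame is therefore the right one, but your pivotal support computation is wrong, and it is wrong exactly at the point you yourself flag as delicate. You claim $E_i\,e(\mathbf{i})\neq 0$ \emph{precisely} when the three residues of $\mathbf{i}$ in positions $i,i+1,i+2$ are pairwise distinct, each pairing positively with $\Lambda$. Counterexample: take $r=1$, $\Lambda=\Lambda_0$, $q$ generic, and the one-dimensional module on which every $T_k$ acts by $q$ (the partition $(n)$, with residue sequence $(0,1,\dots,n-1)$). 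Both $E_i$ and $e(0,1,\dots,n-1)$ act as the identity on it, so $E_i\,e(0,1,\dots,n-1)\neq 0$, yet $(\alpha_1,\Lambda_0)=(\alpha_2,\Lambda_0)=0$. In general $E_i$ is non-zero on every weight space arising from a standard tableau in which the entries $i,i+1,i+2$ occupy boxes no two of which lie in the same column: this includes a row of three boxes, a row of two plus a disconnected box, \emph{and} your configuration of three first boxes of distinct components. The trivial $A_2$-module contributes in all of these cases, not ``distinct instead of consecutive''; it contributes for both.

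The consequence is that the generator-by-generator matching on which both of your inclusions rest ($E_i$ versus the second sum in (\ref{ieq}), $E_0^{(j)}$ versus the first) breaks down; only the \emph{total} ideals coincide. When you expand $E_i=\sum_{\mathbf{i}}E_i\,e(\mathbf{i})$ for your reverse inclusion, the terms whose residues at positions $i,i+1,i+2$ form consecutive (row-type) patterns are not covered by the second family at all: they must be absorbed by the first family of generators (the ones you associate to the $E_0^{(j)}$), and moreover the relevant patterns sit in arbitrary positions, whereas the listed generators of $\mathcal{J}_n$ have their distinguished entries in positions $1,2$ or $1,2,3$. So in addition to correcting the support statement to the weaker (and provable) claim ``every weight space in the support of $E_i$, resp.\ $E_0^{(j)}$, has its idempotent lying in $\mathcal{J}_n$'', you need genuine transport/straightening lemmas in the KLR algebra showing that an idempotent $e(\mathbf{i})$ whose pattern occurs in non-initial positions already lies in the two-sided ideal generated by the initial-position ones; this cannot be done by simple conjugation by $\psi_w$, since $\psi_w e(\mathbf{i})\psi_{w^{-1}}$ is not an idempotent permutation of entries in general. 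These cross-contributions between the two families, together with the genericity hypothesis (\ref{uures}) (which you do invoke correctly, e.g.\ to pin down the support of $E_0^{(j)}$), are the actual technical content of the cited proof; as written, your argument would stall at the false ``precisely when''.
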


\section{The definition of $TL_{r,p,n}$} \label{subsec:813}

In this section, we define the generalised Temperley-Lieb algebra $TL_{r,p,n}$ corresponding to $G(r,p,n)$ using the automorphism $\sigma$. In analogy with the case $p=1$, we will show that $TL_{r,p,n}$ is a quotient of the corresponding Hecke algebra $H(r,p,n)$.

Denote by $TL_{r,1,n}(q,\zeta)$ 
the generalised Temperley-Lieb algebra $TL_{r,1,n}$ corresponding to $H_n^{\Lambda}(q,\zeta)$. It is a specialisation of the generalised Temperley-Lieb algebra $TL^{\Lambda}_{r,1,n}$.

By Theorem \ref{dftl}, $TL_{r,1,n}(q,\zeta)=\mathcal{R}_n^{\Lambda}(\Gamma_{e,p})/\mathcal{J}_{n}(\Lambda)$ where {$\mathcal{J}_{n}(\Lambda)$} is the two-sided ideal 
generated by all the $e(\mathbf{i}),\mathbf{i}\in K^n$ such that:
\begin{equation}\label{con1}
	i_1=i_2,j_1=j_2-1\text{  if }\mathbf{i}_1=(i_1,j_1)\text{ and }\mathbf{i}_2=(i_2,j_2)
\end{equation}
or
\begin{equation}\label{con2}
	(\alpha_{\mathbf{i}_l},\Lambda)>0\text{ for }l=1,2,3.
\end{equation}

As $\sigma_0((i,j))=(i-1,j)$, $e(\mathbf{i})$ satisfies $(\ref{con1})$ if and only if $\sigma(e(\mathbf{i}))$ does. The restriction on $\Lambda$ implied by $(\Lambda,\alpha_{(i,j)})=(\Lambda,\alpha_{(i',j)})$ guarantees that $e(\mathbf{i})$ satisfies $(\ref{con2})$ if and only if $\sigma(e(\mathbf{i}))$ does. Therefore, we have
\begin{equation}\label{eq:i-sigma-inv}
	\sigma(\mathcal{J}_{n}(\Lambda))=\mathcal{J}_{n}(\Lambda),
\end{equation}
which implies that $\sigma$ induces an automorphism of $TL_{r,1,n}(q,\zeta)$, which we denote by $\sigma_{TL}$. Thus $\sigma=\sigma_{TL}$ is the automorphism of $TL_{r,1,n}(q,\zeta)$ such that
\begin{equation} \label{eq817}
    \begin{aligned}
	\sigma_{TL}(e((i_1,j_1),(i_2,j_2),\dots,(i_n,j_n)))&=e((i_1-1,j_1),(i_2-1,j_2),\dots,(i_n-1,j_n));\\
	&\sigma_{TL}(y_i)=y_i \text{  for }1\leq i\leq n\\
	&\sigma_{TL}(\psi_i)=\psi_i \text{  for } 1\leq i\leq n-1.
\end{aligned}
\end{equation}
We can now state the main theorem of this section.

\begin{thm}$\label{TLdf}$
Let $\zeta\in R$ be a primitive root of unity of order $p$ and let $TL_{r,1,n}(q,\zeta)$ be the specialisation of the generalised Temperley-Lieb algebra 
$TL_{r,1,n}^{\Lambda}$ such that the relation $(\ref{ordering relation})$ is transformed into (\ref{domwt}).
    Let $\sigma_{TL}$ be the automorphism of $TL_{r,1,n}(q,\zeta)$ defined above, let
     $TL_{r,1,n}(q,\zeta)^{\sigma_{TL}}$ be the fixed point subalgebra under $\sigma_{TL}$ and let $\mathcal{J}^{\sigma}=\mathcal{J}_n(\Lambda)\cap \mathcal{R}_n^{\Lambda}(\Gamma_{e,p})^{\sigma}$. Then we have
	\begin{equation}
		TL_{r,1,n}(q,\zeta)^{\sigma_{TL}}\cong \mathcal{R}_n^{\Lambda}(\Gamma_{e,p})^{\sigma}/\mathcal{J}^{\sigma}.
	\end{equation}
	Thus the fixed subalgebra $TL_{r,1,n}(q,\zeta)^{\sigma_{TL}}$ is a quotient of $\mathcal{R}_n^{\Lambda}(\Gamma_{e,p})^{\sigma}$, which is isomorphic to the cyclotomic Hecke algebra of type $G(r,p,n)$.
\end{thm}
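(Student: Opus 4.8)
The plan is to apply the first isomorphism theorem to the restriction of the quotient map to the fixed-point subalgebra, and to establish surjectivity by means of an averaging (Reynolds) operator, which is available because the order of $\sigma$ is invertible in $R$. Write $\mathcal{R} := \mathcal{R}_n^{\Lambda}(\Gamma_{e,p})$ and let $\pi \colon \mathcal{R} \to TL_{r,1,n}(q,\zeta)$ be the canonical surjection with kernel $\mathcal{J}$. By the discussion preceding the theorem we have $\sigma(\mathcal{J}) = \mathcal{J}$, hence $\pi \circ \sigma = \sigma_{TL} \circ \pi$.

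First I would observe that $\pi$ maps $\mathcal{R}^{\sigma}$ into $TL_{r,1,n}(q,\zeta)^{\sigma_{TL}}$: if $\sigma(x) = x$ then $\sigma_{TL}(\pi(x)) = \pi(\sigma(x)) = \pi(x)$. Denote by $\theta \colon \mathcal{R}^{\sigma} \to TL_{r,1,n}(q,\zeta)^{\sigma_{TL}}$ the resulting algebra homomorphism. Its kernel is $\mathcal{R}^{\sigma} \cap \ker \pi = \mathcal{R}^{\sigma} \cap \mathcal{J} = \mathcal{J}^{\sigma}$, and one checks directly that $\mathcal{J}^{\sigma}$ is a two-sided ideal of $\mathcal{R}^{\sigma}$, being the intersection of an ideal of $\mathcal{R}$ with the subalgebra $\mathcal{R}^{\sigma}$. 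Thus $\theta$ descends to an injection $\mathcal{R}^{\sigma}/\mathcal{J}^{\sigma} \hookrightarrow TL_{r,1,n}(q,\zeta)^{\sigma_{TL}}$, and it remains only to prove that $\theta$ is surjective.

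Surjectivity is the heart of the matter, since forming invariants does not commute with forming quotients in general. Here I would use that $\sigma$ is induced by the order-$p$ quiver automorphism $\sigma_0(i,j) = (i-1,j)$ on $I = \mathbb{Z}/p\mathbb{Z}$, so $\sigma^p = \mathrm{id}$, and that $p$ is invertible in $R$ (as $R$ is a field of characteristic $0$ containing a primitive $p$-th root of unity). Define the Reynolds operators $\mathcal{E} = \frac{1}{p}\sum_{k=0}^{p-1}\sigma^k$ on $\mathcal{R}$ and $\mathcal{E}_{TL} = \frac{1}{p}\sum_{k=0}^{p-1}\sigma_{TL}^k$ on $TL_{r,1,n}(q,\zeta)$; these are $R$-linear projections onto $\mathcal{R}^{\sigma}$ and $TL_{r,1,n}(q,\zeta)^{\sigma_{TL}}$ respectively. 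From $\pi \circ \sigma^k = \sigma_{TL}^k \circ \pi$ one gets $\pi \circ \mathcal{E} = \mathcal{E}_{TL} \circ \pi$. Now given $\bar y \in TL_{r,1,n}(q,\zeta)^{\sigma_{TL}}$, choose $x \in \mathcal{R}$ with $\pi(x) = \bar y$; then $\mathcal{E}(x) \in \mathcal{R}^{\sigma}$ and $\theta(\mathcal{E}(x)) = \pi(\mathcal{E}(x)) = \mathcal{E}_{TL}(\pi(x)) = \mathcal{E}_{TL}(\bar y) = \bar y$, the last equality holding because $\bar y$ is already $\sigma_{TL}$-fixed. Hence $\theta$ is onto, and the first isomorphism theorem yields $\mathcal{R}^{\sigma}/\mathcal{J}^{\sigma} \cong TL_{r,1,n}(q,\zeta)^{\sigma_{TL}}$.

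Finally, the displayed isomorphism exhibits $TL_{r,1,n}(q,\zeta)^{\sigma_{TL}}$ as the quotient $\mathcal{R}^{\sigma}/\mathcal{J}^{\sigma}$ of $\mathcal{R}^{\sigma} = \mathcal{R}_n^{\Lambda}(\Gamma_{e,p})^{\sigma}$, and by the Corollary following Theorem \ref{auto} we have $\mathcal{R}_n^{\Lambda}(\Gamma_{e,p})^{\sigma} \cong H_{p,n}^{\Lambda}(q)$, a specialisation of the cyclotomic Hecke algebra of type $G(r,p,n)$; this gives the concluding assertion. I expect the only genuine obstacle to be the surjectivity of $\theta$: once the invertibility of $p$ is used to construct $\mathcal{E}$ the argument is formal, so the substantive input is the verification, already carried out before the statement, that $\sigma$ preserves $\mathcal{J}$, which is what makes $\sigma_{TL}$, $\mathcal{E}_{TL}$, and the entire intertwining diagram well-defined.
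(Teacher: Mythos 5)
Your proposal is correct and takes essentially the same approach as the paper: both consider the natural map induced by the quotient $\pi$ (the paper defines $f(a+\mathcal{J}^{\sigma})=a+\mathcal{J}$ directly on $\mathcal{R}_n^{\Lambda}(\Gamma_{e,p})^{\sigma}/\mathcal{J}^{\sigma}$) and prove surjectivity by averaging a lift of a $\sigma_{TL}$-fixed coset over the $\langle\sigma\rangle$-orbit, which is valid because $p$ is invertible in the characteristic-zero field $R$. The paper carries out this averaging on a coset representative $a'=\frac{1}{p}\bigl(a+\sigma(a)+\dots+\sigma^{p-1}(a)\bigr)$ rather than packaging it as Reynolds operators with the intertwining relation $\pi\circ\mathcal{E}=\mathcal{E}_{TL}\circ\pi$, but this is only a difference of presentation.
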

\begin{proof}
{By Theorem \ref{dftl}, $TL_{r,1,n}(q,\zeta)=\mathcal{R}_n^{\Lambda}(\Gamma_{e,p})/\mathcal{J}_{n}(\Lambda)$.}
	Let $f:\mathcal{R}_n^{\Lambda}(\Gamma_{e,p})^{\sigma}/\mathcal{J}^{\sigma}$ $\to TL_{r,1,n}(q,\zeta)^{\sigma_{TL}}$ be the map such that $f(a+\mathcal{J}^{\sigma})=a+\mathcal{J}_{n}(\Lambda)$ for all $a\in \mathcal{R}_n^{\Lambda}(\Gamma_{e,p})^{\sigma}$. We only need to prove the surjectivity.
	
	
	 
	 For any $a+\mathcal{J}\in TL_{r,1,n}(q,\zeta)^{\sigma_{TL}}$, we have $a+\mathcal{J}=\sigma(a+\mathcal{J})=\sigma(a)+\mathcal{J}$. Thus, $a+\mathcal{J}=\sigma(a)+\mathcal{J}=\dots=\sigma^{p-1}(a)+\mathcal{J}$. As the characteristic of the field is 0 , $a'=\frac{a+\sigma(a)+\dots+\sigma^{p-1}(a)}{p}$ is an element in $\mathcal{R}_n^{\Lambda}(\Gamma_{e,p})^{\sigma}$ and $a'+\mathcal{J}=a+\mathcal{J}$. Further, $a'+\mathcal{J}^{\sigma}\in \mathcal{R}_n^{\Lambda}(\Gamma_{e,p})^{\sigma}/\mathcal{J}^{\sigma}$ and $f(a'+\mathcal{J}^{\sigma})=a+\mathcal{J}$. So $f$ is a surjection.
	 
	 So $f:\mathcal{R}_n^{\Lambda}(\Gamma_{e,p})^{\sigma}/\mathcal{J}^{\sigma}\mapsto TL_{r,1,n}(q,\zeta)^{\sigma_{TL}}$ is an isomorphism. Therefore, we have $TL_{r,1,n}(q,\zeta)^{\sigma_{TL}}\cong \mathcal{R}_n^{\Lambda}(\Gamma_{e,p})^{\sigma}/\mathcal{J}^{\sigma}$.
\end{proof}
We now define our generalised Temperley-Lieb algebra of type $G(r,p,n)$ as follows:
\begin{defn} \label{dfrpn}
        Let $R$ be a field of characteristic $0$ and let $TL_{r,1,n}(q,\zeta)$ be the generalised Temperley-Lieb algebra defined over $R$ as in \S \ref{subsec:813} above.
        Let $\sigma_{TL}$ be the automorphism of $TL_{r,1,n}(q,\zeta)$ used in Theorem \ref{TLdf}. 
        We define the fixed subalgebra $TL_{r,1,n}(q,\zeta)^{\sigma_{TL}}$ as $TL_{r,p,n}$ and refer to it as the Temperley-Lieb algebra corresponding to the complex reflection group $G(r,p,n)$.
\end{defn}

Since the map $\sigma_{TL}$ is trivial when $p=1$, $TL_{r,1,n}(q,\zeta)$ can be regarded as a special case of the generalised Temperley-Lieb algebra $TL_{r,p,n}$. In particular, the Temperley-Lieb algebras of types $A_{n-1}$ and $B_{n}$ are both special cases of our $TL_{r,p,n}$, where $r=p=1$ and $r=2,\; p=1$, respectively.
As another special case of $TL_{r,p,n}$ with $r=p=2$, the algebra $TL_{2,2,n}$ is a quotient of the Temperley-Lieb algebra of type-$D_n$ in the sense of \cite{Sun2009} and \cite{Li2021}.
This quotient is called a forked Temperley-Lieb algebra. We refer readers to \cite{GROSSMAN2007477} for details.

\section{3-dimensional multipartitions and the cellular structure of $TL_{r,1,n}$} \label{3dmlp}

As is evident from the above definition, $TL_{r,1,n}(q,\zeta)$ is a specialisation of the Temperley-Lieb algebra $TL_{r,1,n}$. Hence by Theorem 4.19 in \cite{LL22}, there is a natural cellular
structure  on $TL_{r,1,n}(q,\zeta)$.

  In this section, we focus on this specialisation. As we see from the interpretation above, this specialisation transforms the quiver with a single layer in section 2.4 in \cite{LL22} into one with $p$ layers. Accordingly, we need to rearrange the partitions in a multipartition.
More precisely, we introduce 3-dimensional multipartitions and tableaux. They are the primary tool we will later use to prove the cellularity of the generalised Temperley-Lieb algebra $TL_{r,p,n}$.

  Let $\mathcal{K}=I'\times L$ be an index set where $I'=\{0,1,\dots,p-1 \}$ and $L=\{0,1,\dots,d-1 \}$. Note that this index set $\mathcal{K}$ is different from the vertex set $K$ of the quiver $\Gamma_{e,p}$. We first construct a table with $d$ rows and $p$ columns and label the boxes in the table with the elements of $\mathcal{K}$. We call this table the floor of a 3D multipartition. Figure-\ref{floor} shows the floor with $p=5$ and $d=3$:
  
  \begin{figure}[h]
	\begin{center}
		\begin{tikzpicture}[scale=1]
			\foreach \x in {0,1,2,3}
			\draw (0,\x)--(5,\x);
			\foreach \x in {0,1,2,3,4,5}
			\draw (\x,3)--(\x,0);
			
			\draw node at (0.5,0.5){(0,0)};\draw node at (1.5,0.5){(1,0)};\draw node at (2.5,0.5){(2,0)};\draw node at (3.5,0.5){(3,0)};\draw node at (4.5,0.5){(4,0)};
						\draw node at (0.5,1.5){(0,1)};\draw node at (1.5,1.5){(1,1)};\draw node at (2.5,1.5){(2,1)};\draw node at (3.5,1.5){(3,1)};\draw node at (4.5,1.5){(4,1)};
									\draw node at (0.5,2.5){(0,2)};\draw node at (1.5,2.5){(1,2)};\draw node at (2.5,2.5){(2,2)};\draw node at (3.5,2.5){(3,2)};\draw node at (4.5,2.5){(4,2)};
		\end{tikzpicture}
	\end{center}
	\caption{The floor of $5\times 3$ } \label{floor}
\end{figure}
  
  A {\em 3D multipartition of $n$} consists of a floor labelled with $\mathcal{K}$ and $n(i,l)$ nodes in a column in the box $(i,l)$ on the floor such that 
  \begin{equation*}
      \sum_{i=0}^{p-1}\sum_{l=0}^{d-1}n(i,l)=n.
  \end{equation*}
  Denote by $\mathfrak{P}_n$ the set of 3D multipartitions of $n$ in which at most two boxes on the floor are non-empty. Figures \ref{3dir} and \ref{3dr} give two examples of such 3D multipartitions. We next introduce some notation relating to the 3D multipartitions in $\mathfrak{P}_n$.
  
 For $\lambda\in \mathfrak{P}_n$, if $\lambda$ has only one non-empty component, which is in the box $(i,l)$, call $\lambda$ a single multipartition, and denote it by $\lambda_{(i,l)}$. Otherwise, let $(i_1,l_1)$ and $(i_2,l_2)$ be the two non-empty boxes with $l_1<l_2$ or $l_1=l_2$ and $i_1<i_2$. If there are $x$ nodes in the partition in box $(i_1,l_1)$ and $y$ nodes in $(i_2,l_2)$, denote $\lambda$ by $\lambda_{(i_1,l_1),(i_2,l_2)}^{[x-y]}$. As $\lambda$ is a multipartition of $n$, we have $x+y=n$. Therefore, the multipartition $\lambda$ is determined by the indices $(i_1,l_1),(i_2,l_2)$ and the number ${x-y}$. We are now in a position to define the reducibility of a 3D multipartition.
 \begin{defn}\label{irrr}
     Let $\lambda\in \mathfrak{P}_n$ be a 3D multipartition. If $\lambda$ is of the form $\lambda_{(i_1,l_1),(i_2,l_2)}^{[x-y]}$ with $i_1\neq i_2$, we call it an irreducible multipartition. Otherwise, we call it reducible.
 \end{defn}

 In other words, if the two non-empty boxes are in the same column of the floor, we call the multipartition a reducible one; otherwise, it is irreducible. In the last section of this chapter, we will show that the cell modules of $TL_{r,p,n}$ corresponding to irreducible multipartitions are simple. Figure-\ref{3dir} gives an irreducible multipartition of $5$ and Figure-\ref{3dr} gives a reducible one.
  
   \begin{figure}[h]
	\begin{center}
		\begin{tikzpicture}[scale=1]
			\foreach \x in {0,0.5,1,1.5}
			\draw [dashed] (0+\x,\x)--(5+\x,\x);
			\foreach \x in {0,1,2,3,4,5}
			\draw [dashed] (\x+1.5,1.5)--(\x,0);
			
			\draw[blue, very thick] (2,0)--(2,2);\draw[blue, very thick] (3,0)--(3,2);\draw[blue, very thick] (2.5,0.5)--(2.5,2.5);\draw[blue, very thick] (3.5,0.5)--(3.5,2.5);
			\draw[blue, very thick] (2,0)--(3,0);\draw[blue, very thick] (2.5,0.5)--(3.5,0.5);\draw[blue, very thick] (2,0)--(2.5,0.5);\draw[blue, very thick] (3,0)--(3.5,0.5);
			\draw[blue, very thick] (2,1)--(3,1);\draw[blue, very thick] (2.5,1.5)--(3.5,1.5);\draw[blue, very thick] (2,1)--(2.5,1.5);\draw[blue, very thick] (3,1)--(3.5,1.5);
			\draw[blue, very thick] (2,2)--(3,2);\draw[blue, very thick] (2.5,2.5)--(3.5,2.5);\draw[blue, very thick] (2,2)--(2.5,2.5);\draw[blue, very thick] (3,2)--(3.5,2.5);
			
			\draw[orange, very thick] (5,1)--(5,4);\draw[orange, very thick] (6,1)--(6,4);\draw[orange, very thick] (5.5,1.5)--(5.5,4.5);\draw[orange, very thick] (6.5,1.5)--(6.5,4.5);
			\draw[orange, very thick] (5,1)--(6,1);\draw[orange, very thick] (5.5,1.5)--(6.5,1.5);\draw[orange, very thick] (6,1)--(6.5,1.5);\draw[orange, very thick] (5,1)--(5.5,1.5);
			\draw[orange, very thick] (5,2)--(6,2);\draw[orange, very thick] (5.5,2.5)--(6.5,2.5);\draw[orange, very thick] (6,2)--(6.5,2.5);\draw[orange, very thick] (5,2)--(5.5,2.5);
			\draw[orange, very thick] (5,3)--(6,3);\draw[orange, very thick] (5.5,3.5)--(6.5,3.5);\draw[orange, very thick] (6,3)--(6.5,3.5);\draw[orange, very thick] (5,3)--(5.5,3.5);
			\draw[orange, very thick] (5,4)--(6,4);\draw[orange, very thick] (5.5,4.5)--(6.5,4.5);\draw[orange, very thick] (6,4)--(6.5,4.5);\draw[orange, very thick] (5,4)--(5.5,4.5);

            	\draw node at (0.5,-0.5){0};\draw node at (1.5,-0.5){1};\draw node at (2.5,-0.5){2};\draw node at (3.5,-0.5){3};\draw node at (4.5,-0.5){4};
            	\draw node at (-0.5,0.25){0};\draw node at (0,0.75){1};\draw node at (0.5,1.25){2};
		\end{tikzpicture}
	\end{center}
	\caption{ The irreducible multipartition $\lambda_{(2,0),(4,2)}^{[-1]}$ } \label{3dir}
\end{figure}

 \begin{figure}[h]
	\begin{center}
		\begin{tikzpicture}[scale=1]
			\foreach \x in {0,0.5,1,1.5}
			\draw [dashed] (0+\x,\x)--(5+\x,\x);
			\foreach \x in {0,1,2,3,4,5}
			\draw [dashed] (\x+1.5,1.5)--(\x,0);
			
			\draw [blue, very thick](2,0)--(2,1);\draw[blue, very thick] (3,0)--(3,1);\draw[blue, very thick] (2.5,0.5)--(2.5,1.5);\draw[blue, very thick] (3.5,0.5)--(3.5,1.5);
			\draw[blue, very thick] (2,0)--(3,0);\draw[blue, very thick] (2.5,0.5)--(3.5,0.5);\draw[blue, very thick] (2,0)--(2.5,0.5);\draw[blue, very thick] (3,0)--(3.5,0.5);
			\draw[blue, very thick] (2,1)--(3,1);\draw[blue, very thick] (2.5,1.5)--(3.5,1.5);\draw[blue, very thick] (2,1)--(2.5,1.5);\draw[blue, very thick] (3,1)--(3.5,1.5);

			\draw[orange, very thick] (3,1)--(3,5);\draw[orange, very thick] (4,1)--(4,5);\draw[orange, very thick] (3.5,1.5)--(3.5,5.5);\draw[orange, very thick] (4.5,1.5)--(4.5,5.5);
			\draw[orange, very thick] (3,1)--(4,1);\draw[orange, very thick] (3.5,1.5)--(4.5,1.5);\draw[orange, very thick] (4,1)--(4.5,1.5);\draw[orange, very thick] (3,1)--(3.5,1.5);
			\draw[orange, very thick] (3,2)--(4,2);\draw[orange, very thick] (3.5,2.5)--(4.5,2.5);\draw[orange, very thick] (4,2)--(4.5,2.5);\draw[orange, very thick] (3,2)--(3.5,2.5);
			\draw[orange, very thick] (3,3)--(4,3);\draw[orange, very thick] (3.5,3.5)--(4.5,3.5);\draw[orange, very thick] (4,3)--(4.5,3.5);\draw[orange, very thick] (3,3)--(3.5,3.5);
			\draw[orange, very thick] (3,4)--(4,4);\draw[orange, very thick] (3.5,4.5)--(4.5,4.5);\draw[orange, very thick] (4,4)--(4.5,4.5);\draw[orange, very thick] (3,4)--(3.5,4.5);
			\draw[orange, very thick] (3,5)--(4,5);\draw[orange, very thick] (3.5,5.5)--(4.5,5.5);\draw[orange, very thick] (4,5)--(4.5,5.5);\draw[orange, very thick] (3,5)--(3.5,5.5);

                        	\draw node at (0.5,-0.5){0};\draw node at (1.5,-0.5){1};\draw node at (2.5,-0.5){2};\draw node at (3.5,-0.5){3};\draw node at (4.5,-0.5){4};
            	\draw node at (-0.5,0.25){0};\draw node at (0,0.75){1};\draw node at (0.5,1.25){2};
		\end{tikzpicture}
	\end{center}
	\caption{ The reducible multipartition $\lambda_{(2,0),(2,2)}^{[-3]}$ } \label{3dr}
\end{figure}

  We next define the residue of each node in a multipartition. This residue provides a connection between the multipartitions and the quivers we introduced above.
  Let {$j_1, \ldots, j_{d-1} \in J=\mathbb{Z}/e\mathbb{Z}$} be such that the dominant weight $\Lambda$ associated with the parameters in (\ref{domwt}) is of the following form
  \begin{equation}
  	\Lambda=\sum_{i=0}^{p-1}\sum_{l=0}^{d-1}\Lambda_{(i,j_l)}.
  \end{equation}
  For $\lambda\in\mathfrak{P}_n$, let $\gamma=(a,1,(i,l))$ be the node in the $a^{th}$ row of the $(i,l)^{th}$ component of $\lambda$. We denote
  \begin{equation*}
  	Res^{\Lambda}(\gamma)=(i,1-a+j_l)\in K
  \end{equation*}
where $K=\mathbb{Z}/p\mathbb{Z}\times \mathbb{Z}/e\mathbb{Z}$ is the vertex set of the quiver $\Gamma_{e,p}$.

A tableau corresponding to a 3D multipartition of $n$ is defined as a {bijective} filling of the multipartition with numbers $1,2,\dots, n$; we call it standard if these numbers increase from the floor table along each column.
For a 3D multipartition $\lambda$, denote by $Tab (\lambda)$ the set of tableaux of shape $\lambda$ and by $Std(\lambda)$ the standard ones. For $t\in Tab (\lambda)$ and $1\leq m\leq n$, set $Res_t^{\Lambda}(m)=Res^{\Lambda}(\gamma)$, where $\gamma$ is the unique node such that $t(m)=\gamma$. Define the residue sequence of $t$ as follows:
\begin{equation} \label{5.3}
	res^{\Lambda}(t):=(Res_t^{\Lambda}(1),Res_t^{\Lambda}(2),\dots,Res_t^{\Lambda}(n))\in K^n.
\end{equation}

For two nodes $\gamma=(a,1,(i,l))$ and $\gamma'=(a',1,(i',{l'}))$ in a 3D multipartition $\lambda$ we say $\gamma <\gamma'$ if one of the following conditions holds:

  (i) $a<a'$;
 
  (ii) $a=a'$ and $l<l'$;
  
  (iii)$a=a'$, $l=l'$ and $i<i'$. 
  
  We write $\gamma \leq \gamma'$ if $\gamma<\gamma'$ or $\gamma=\gamma'$. The order $\leq$ is a total order on the set of nodes. Denote by $t^{\lambda}$ the unique tableau of shape $\lambda$ such that $t^{\lambda}(i)<t^{\lambda}(j)$ if $1\leq i<j\leq n$. Let $e_{\lambda}=e(res^{\Lambda}(t^{\lambda}))$ be the {corresponding} idempotent in $R_n^{\Lambda}(\Gamma_{e,p})$. 

  Now the total order $\leq$ on nodes leads to a partial order $\unlhd $ on $\mathfrak{P}_n$, the set of 3D-multipartitions in the following way: for $\lambda$ and $ \mu \in \mathfrak{P}_n $, we say $\lambda\unlhd \mu$ if for each $\gamma_0\in \mathbb{N}\times \{1\}\times \mathcal{K}$ we have
\begin{equation}\label{podf}
	|\{\gamma\in[\lambda]:\gamma \leq \gamma_0\}|\geq 	|\{\gamma\in[\mu ]:\gamma \leq \gamma_0\}|,
\end{equation}
     where $\leq$ is the total order of nodes. 
  
  Let $s,t\in Std(\lambda)$ and fix reduced expressions $d(s)=s_{i_1}s_{i_2}\dots s_{i_k}$ and $d(t)=s_{j_1}s_{j_2}\dots s_{j_m}$ such that $s=t^{\lambda}\circ d(s)$ and $t=t^{\lambda}\circ d(t)$ where $\circ$ is the natural $\mathfrak{S}_n$-action on the tableaux. Let $*$ be the unique $R$-linear anti-automorphism of the KLR algebra $\mathcal{R}_n^{\Lambda}$ introduced by Brundan and Kleshchev in section 4.5 of \cite{BrundanKleshchev_2009} which fixes all the generators of KLR type.

  For any 3D-multipartition $\lambda\in \mathfrak{P}_n$ and $s,t\in Std(\lambda)$, let
  \begin{equation}\label{eq104}
  		C_{s,t}^{\lambda}=\psi_{d(s)}^* e_{\lambda}\psi_{d(t)}.
  \end{equation}

{The rules (ii) and (iii) above give a total order of the positions on the floor and the derived partial order $\unlhd$ is equivalent to the partial order $\unlhd$ in \cite{LL22} and the partial order $\unlhd_0$ in \cite{LOBOSMATURANA2020106277}. By giving up the 3D construction, we can obtain the following cellular basis:}

\begin{thm}(\cite{LL22}, Theorem 4.19) \label{cb419}
    The set $\{ C_{s,t}^{\lambda}| \lambda\in \mathfrak{P}_n, s,t\in Std(\lambda)\}$ forms a (graded) cellular basis of the generalised Temperley-Lieb algebra $TL_{r,1,n}(q,\zeta)$ with respect to the partial order $\unlhd $.
\end{thm}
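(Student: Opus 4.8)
The plan is to transport the known cellular basis of $TL_{r,1,n}(q,\zeta)$ from the ordinary (one-layer) multipartition combinatorics to the 3D-multipartition combinatorics, showing that the two indexing schemes are in bijection and that this bijection is compatible with all the cellular data. Since $TL_{r,1,n}(q,\zeta)$ is already known to be cellular by Theorem 4.19 of \cite{LL22} (with respect to the one-layer multipartitions and their dominance order), the content here is that the $p$-layer reorganisation of the quiver $\Gamma_{e,p}$ induces a faithful relabelling of the same basis elements, and that the new partial order $\unlhd$ on $\mathfrak{P}_n$ defined in $(\ref{podf})$ refines or agrees with the old one in the way required by the cellular axioms. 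Concretely, I would first set up the dictionary: given a 3D-multipartition $\lambda\in\mathfrak{P}_n$ with floor-boxes indexed by $\mathcal{K}=I'\times L$, I would explain how the residue map $Res^{\Lambda}(\gamma)=(i,1-a+j_l)$ packages each column of the floor into a component of an ordinary multipartition carrying a KLR-multicharge, so that $\lambda$ corresponds to a unique $r$-multipartition appearing in the cellular datum of \cite{LL22}, and conversely.

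Second, I would verify that the elements $C^\lambda_{s,t}=\psi^*_{d(s)}\,e_\lambda\,\psi_{d(t)}$ defined in $(\ref{eq104})$ literally coincide (up to the relabelling of residue sequences) with the cellular basis elements of Theorem \ref{cb419}'s source in \cite{LL22}. The key observation is that $e_\lambda=e(\mathrm{res}^\Lambda(t^\lambda))$ depends only on the residue sequence, and the $\psi$-generators and the anti-involution $*$ are defined identically in both pictures; only the combinatorial bookkeeping of which idempotent is attached to which multipartition has changed. Thus I must check three things: (a) the map $\lambda\mapsto$ (ordinary multipartition) is a bijection $\mathfrak{P}_n\to$ (the index set used in \cite{LL22}); (b) for each $\lambda$, the standard-tableau sets $Std(\lambda)$ match under this bijection, so the $(s,t)$ ranges agree; and (c) the residue sequences $\mathrm{res}^\Lambda(t)$ agree, so the $e_\lambda$ and $\psi_{d(t)}$ are unchanged. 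Points (b) and (c) reduce to checking that the node-order $\leq$ and the standardness condition (numbers increasing up each column from the floor) are the transport of the corresponding conditions for ordinary tableaux.

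Third, and this is where the real work lies, I would prove that the partial order $\unlhd$ on $\mathfrak{P}_n$ given by $(\ref{podf})$ corresponds under the bijection to the dominance order used in \cite{LL22}, or at least that it is a coarsening that still satisfies the cellular straightening axiom
\[
C^\lambda_{s,t}\,h \equiv \sum_{v\in Std(\lambda)} r_v\,C^\lambda_{s,v} \pmod{\mathcal{R}^{\unrhd\lambda}}
\]
for all $h\in TL_{r,1,n}(q,\zeta)$, where $\mathcal{R}^{\unrhd\lambda}$ is the span of basis elements labelled by multipartitions strictly dominating $\lambda$. The anti-involution axiom $(C^\lambda_{s,t})^*=C^\lambda_{t,s}$ is immediate from $(\ref{eq104})$ and $\psi^{**}=\psi$, so it needs only a one-line remark. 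The multiplication/straightening axiom is inherited from \cite{LL22} precisely because the basis elements are unchanged; the only thing to confirm is that the ideal filtration defined by $\unlhd$ coincides with the one defined by the old order, which is exactly the order-comparison in the previous step.

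I expect the main obstacle to be the order comparison in the third step: establishing that the counting condition $(\ref{podf})$, phrased in terms of cumulative node counts $\le\gamma_0$ under the total order $\leq$ on $\mathbb{N}\times\{1\}\times\mathcal{K}$, reproduces (the relevant restriction of) the dominance order on ordinary multipartitions. Because $\mathfrak{P}_n$ contains only multipartitions with at most two non-empty floor-boxes, I would exploit this severe restriction to make the comparison explicit: for single and two-box multipartitions the cumulative counts reduce to comparing the pair of column-heights $(a,b)$ together with the floor-positions $(i_1,l_1),(i_2,l_2)$, and I would check directly that $(\ref{podf})$ then matches the induced order. The cellular axioms themselves then follow formally, so the proof is essentially a transport-of-structure argument whose only non-formal ingredient is this order-matching lemma.
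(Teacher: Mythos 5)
Your proposal is correct and is essentially the paper's own approach: the paper gives no independent proof of Theorem \ref{cb419} but simply cites Theorem 4.19 of \cite{LL22}, the 3D-multipartition formalism being a relabelling of the $r$ components of an ordinary multipartition by the floor set $\mathcal{K}=I'\times L$, under which the elements $C^{\lambda}_{s,t}$, the standard tableaux, the residue sequences and the order (\ref{podf}) all correspond to their counterparts in \cite{LL22}. Your transport-of-structure plan, including the order-matching step you single out as the only non-formal ingredient, is precisely the implicit content of that citation.
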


  \section{Skew cellular algebras}
In this section, we review some general results on graded skew cellular algebras, which were introduced by Hu, Mathas and Rostam in \cite{hu2021skew} as a
 generalisation of the cellular algebras by Graham and Lehrer in \cite{Lehrer1996}. We will demonstrate the cellularity of our generalised Temperley-Lieb algebra $TL_{r,p,n}$ using the theory of skew cellularity.

In a similar way to cellular algebras, skew cellular algebras are defined in terms of a skew cell datum, which consists of the terms of a cell datum as well as a poset involution.

\begin{defn}
	Let $\mathfrak{P}$ be a finite poset with $\trianglelefteq$ as the partial order. A poset automorphism of $(\mathfrak{P},\trianglelefteq)$ is a permutation $\sigma$ of $\mathfrak{P}$ such that
	\begin{equation*}
		\lambda\trianglelefteq\mu \text{ if and only if }\sigma(\lambda)\trianglelefteq\sigma(\mu )
	\end{equation*}
    for all $\lambda,\mu\in\mathfrak{P}$. If $\sigma=\iota$ is an involution, we say that $\iota$ is a poset involution of $\mathfrak{P}$.
\end{defn}

Graded skew cellular algebras are defined as follows.

\begin{defn}

	(\cite{hu2021skew}, Definition 2.2) Let $R$ be an integral domain and $A$ be a $\mathbb{Z}$-graded algebra over $R$ which is a free module of finite rank over $R$. 
	
	$A$ is a graded skew cellular algebra if it has a graded skew cellular datum $(\mathfrak{P},\iota,T,C,deg)$ where $(\mathfrak{P},\trianglelefteq)$ is a poset, $\iota$ is a poset involution and for each $\lambda\in\mathfrak{P}$, there is a finite set $T(\lambda)$ together with a bijection $\iota_{\lambda}: T(\lambda)\mapsto T(\iota(\lambda))$ such that
	\begin{equation*}
		\iota_{\iota_{\lambda}}\circ \iota_{\lambda}=id_{T(\lambda)}.
	\end{equation*}
    The injection 
    \begin{align*}
    	C: \cup_{\lambda \in \mathfrak{P}}T(\lambda)\times T(\lambda) & \mapsto A \\
    	(S,T) &\mapsto C_{S,T}^{\lambda}
    \end{align*}
    and the degree map
    \begin{equation*}
    	deg:  \cup_{\lambda \in \mathfrak{P}}T(\lambda)\mapsto \mathbb{Z}
    \end{equation*}
    satisfy the following conditions:
    
    ($C_1$) $C_{S,T}^{\lambda}$ is homogeneous of degree $deg(S)+deg(T)$;
    
    ($C_2$)The image of $C$ forms an $R$-basis of $A$;
    
    ($C_3$) Let $A_{\triangleleft\lambda}$ be the $R$-span of all the elements of form $C_{X,Y}^{\mu}$ with $\mu \triangleleft\lambda$ in the poset.
    Then for all $a\in A$,
    \begin{equation}\label{eq 106}
    	aC_{S,T}^{\lambda}=\sum_{S'\in T(\lambda)} r_a(S',S)C_{S',T}^{\lambda}  \text{    } mod \text{   }A_{\triangleleft\lambda}
    \end{equation}
    with the coefficients $r_a(S',S)$ independent of $T$.
    
    ($C_4$) There is a unique anti-isomorphism $*:A\mapsto A$ such that
    \begin{equation}
    (C_{S,T}^{\lambda})^*=C_{\iota_{\lambda}(T),\iota_{\lambda}(S)}^{\iota(\lambda)}.
    \end{equation} 

    The basis $\{C_{S,T}^{\lambda}|\lambda\in\mathfrak{P},S,T\in T(\lambda) \}$ is called a $\mathbb{Z}$-graded skew cellular basis of $A$.
\end{defn}
    
The following is a baby example of the concept of a skew cellular structure. 
\begin{exmp}
(\cite[Example 2.5]{hu2021skew})
    Let $R$ be a ring and $x,y$ be two indeterminates. For any integer $m\geq 1$, let $A=R[x]/(x^m)\oplus R[y]/(y^m)$. Let $\mathfrak{P}=\mathbb{Z}_2\times \{0,1,\dots,m-1\}$ and $\trianglelefteq$ be the partial order on $\mathfrak{P}$ such that $(i_1,k_1)\trianglelefteq (i_2,k_2)$ only if $i_1=i_2$ and $k_1\leq k_2$. Let $\iota$ be the poset involution on $\mathfrak{P}$ such that $\iota(i,k)=(i+1,k)$. Define $\mathcal{T}(i,k)=\{k\}$ and $deg(k)=k$, that is, there exists a unique `tableau' of each element in $\mathfrak{P}$. Define
    \begin{equation*}
        C_{kk}^{(i,k)}=
        \begin{cases}
        x^k &\textit{ if } i=0;\\
        y^k &\textit{ if } i=1.
        \end{cases}
    \end{equation*}
    Then $(\mathfrak{P},\iota,\mathcal{T},C,deg)$ is a graded skew cellular datum for $A$.
\end{exmp}

By applying the anti-isomorphism $*$ to ($\ref{eq 106}$) and relabelling, we obtain
    \begin{equation}\label{eq 106*}
	C_{S,T}^{\lambda}a=\sum_{T'\in T(\lambda)} r_{a^*}(\iota_{\lambda}(T'),\iota_{\lambda}(T))C_{S,T'}^{\lambda}  \text{    } mod \text{   }A_{\triangleleft\lambda}
\end{equation}
  where the coefficients $r_{a^*}(\iota_{\lambda}(T'),\iota_{\lambda}(T))$ are the same ones as in ($\ref{eq 106}$) and do not depend on $S$.
  
 Note that a graded skew cellular algebra is graded cellular when the poset involution $\iota$ coincides with $id_{\mathfrak{P}}$ and $\iota_{\lambda}=id_{T(\lambda)}$.
 It will later transpire that this observation may be used to show that the skew cellularity of our $TL_{r,p,n}$ is actually cellularity.

  Similarly to cellular structures, skew cellular ones also constitute a useful tool to study the representations of algebras which are not semisimple. Since our generalised Temperley-Lieb algebra $TL_{r,p,n}$ will later turn out to be a cellular algebra, we omit details of skew cell modules and decomposition matrices. We refer the reader to \cite{hu2021skew} for further information.

   Next, recall the definition of shift automorphisms which provide a general method for the construction of skew cellular algebras from cellular ones. 
    \begin{defn}
        
 $\label{shift}$
    (\cite[Definition 2.22]{hu2021skew})
    	Let $A$ be a $\mathbb{Z}$-graded cellular algebra with graded cell datum $(\mathfrak{P},T,C,deg)$. A shift automorphism of $A$ is a triple of automorphisms $\sigma=(\sigma_A,\sigma_{\mathfrak{P}},\sigma_\mathcal{T})$ where $\sigma_A$ is an algebra automorphism of $A$, $\sigma_{\mathfrak{P}}$ is a poset automorphism of $\mathfrak{P}$ and $\sigma_\mathcal{T}$ is a bijection on $\mathcal{T}=\bigsqcup_{\lambda\in \mathfrak{P}}T(\lambda)$ such that:
    	
    	($a$) For $S\in T(\lambda)$, $\sigma_\mathcal{T}(S)\in T(\sigma_{\mathfrak{P}}(\lambda))$ and $deg(\sigma_\mathcal{T}(S))=deg(S)$;
    	
    	($b$) For $S,T\in T(\lambda)$, $\sigma_A (C_{S,T}^{\lambda})=C_{\sigma_\mathcal{T}(T),\sigma_\mathcal{T}(S)}^{\sigma_{\mathfrak{P}}(\lambda)}$;
    	
    	($c$) For $S,T\in T(\lambda)$ {and $k\in \mathbb{N}$}, $\sigma_\mathcal{T}^k(T)=T$ if and only if $\sigma_\mathcal{T}^k(S)=S$.
    \end{defn}

Before stating the main theorem of this section, we introduce some notation.
    
     Denote by $A^{\sigma}$ the subalgebra of $A$ consisting of $\sigma_A$-fixed points. Let $p$ be the order of $\sigma_{A}$ and $p'$ be the order of $\sigma_{\mathfrak{P}}$. Let $\mathfrak{P}_{\sigma}$ be a set of representatives for the $\langle \sigma_{\mathfrak{P}}\rangle$-orbits in $\mathfrak{P}$. Define a partial order $\trianglelefteq_{\sigma}$ on $\mathfrak{P}_n$ by
     \begin{equation*}
     	\lambda\triangleleft_{\sigma}\mu \text{ if and only if } \sigma_{\mathfrak{P}}^k(\lambda)\triangleleft\mu \text{ for some } k\in \mathbb{Z}.
     \end{equation*}
     Denote by $o_{\lambda}$ the size of the $\langle \sigma_{\mathfrak{P}}\rangle$-orbit through $\lambda$. Let $\sigma_{\lambda}=\sigma_{\mathcal{T}}^{o_{\lambda}}$. Similarly, let $T_{\sigma}(\lambda)$ be a set of representatives for the $\langle \sigma_{\lambda}\rangle$-orbit on $T(\lambda)$. The condition ($c$) above implies that for a $\lambda\in \mathfrak{P}$, all the orbits in $T(\lambda)$ are of the same size. Denote this common value by $o_T(\lambda)$. Then $o_{\lambda}=o_{\mu}$ and $o_T(\lambda)=o_T(\mu)$ for $\lambda$ and $\mu$ in the same $\langle \sigma_{\mathfrak{P}}\rangle$-orbit on $\mathfrak{P}$.
     
     Let $\mathfrak{P}_{\sigma,p}:=\{(\lambda,k)|\lambda\in\mathfrak{P}_{\sigma}, k\in\mathbb{Z}/o_T(\lambda)\mathbb{Z} \}$ be the poset with partial order $\triangleleft_{\sigma}$ given by
     \begin{equation*}
     	(\lambda,k)\trianglelefteq_{\sigma}(\mu,l) \text{ if and only if } (\lambda,k)=(\mu,l)\text{ or }\lambda\triangleleft_{\sigma}\mu,
     \end{equation*}
     for all $(\lambda,k),(\mu,l)\in \mathfrak{P}_{\sigma,p}$.
     
     Define $T_{\sigma}(\lambda,k)=T_{\sigma}(\lambda)$ for $(\lambda,k)\in \mathfrak{P}_{\sigma,p}$. Assuming there exists a  primitive $p^{th}$ root of unity {$\zeta \in R$}, set
     \begin{equation*}
     	C_{\sigma}(S,T)=\sum_{j=0}^{o_T(\lambda)-1}\sum_{l=0}^{p-1}\zeta_{\lambda}^{kj}\sigma_{A}^l(C_{S,\sigma_{\lambda}^j(T)}^{\lambda})
     \end{equation*}
      where $\zeta_{\lambda}=\zeta^{p/o_T(\lambda)}$ and $S,T\in T_{\sigma}(\lambda,k)$. Let $deg_{\sigma}(S)=deg(S)$. Finally, let $\iota_{\sigma}$ be the poset involution such that $\iota_{\sigma}(\lambda,k)=(\lambda,-k)$ and $(\iota_{\sigma})_{\lambda,k}:T_{\sigma}(\lambda,k)\mapsto T_{\sigma}(\lambda,-k)$ be the map given by the identity map on $T_{\sigma}(\lambda)$.
     
     Then we have a shift automorphism that leads to a skew cellular structure. More precisely, we have
     \begin{thm} $\label{skc}$
     	(\cite[Theorem 2.28]{hu2021skew}) Suppose $A$ is a $\mathbb{Z}$-graded cellular algebra with graded cell datum $(\mathfrak{P},T,C,deg)$ over the integral domain $R$. Let $\sigma=(\sigma_A,\sigma_{\mathfrak{P}},\sigma_\mathcal{T})$ be a shift automorphism of $A$. Denote by $p$ the order of $\sigma_A$. If $R$ contains a  primitive  $p^{th}$ root of unity and $p\in R^*$, then $A^{\sigma}$ is a graded skew cellular algebra with skew cellular datum $(\mathfrak{P}_{\sigma,p},\iota_{\sigma},T_{\sigma},C_{\sigma},deg_{\sigma})$.
     \end{thm}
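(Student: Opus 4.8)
The plan is to verify directly the four axioms ($C_1$)--($C_4$) for the proposed datum $(\mathfrak{P}_{\sigma,p},\iota_\sigma,T_\sigma,C_\sigma,\deg_\sigma)$, treating the fixed subalgebra $A^\sigma$ as a submodule of the permutation module furnished by the original cellular basis. The starting observation is that, by condition ($b$) of Definition \ref{shift}, $\sigma_A$ permutes the cellular basis $\{C_{S,T}^\lambda\}$ of $A$, transposing the two tableau indices while moving $\lambda$ along its $\langle\sigma_\mathfrak{P}\rangle$-orbit. First I would check that each $C_\sigma(S,T)$ genuinely lies in $A^\sigma$: applying $\sigma_A$ to the defining sum merely re-indexes the outer summation $\sum_{l=0}^{p-1}\sigma_A^l$ (as $\sigma_A^p=\mathrm{id}$), so the element is $\sigma_A$-fixed. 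Homogeneity ($C_1$) is then immediate, since $\sigma_A$ preserves the grading and, by ($a$), $\deg(\sigma_\lambda^j(T))=\deg(T)$, whence every term of the sum has degree $\deg(S)+\deg(T)=\deg_\sigma(S)+\deg_\sigma(T)$.

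The crux is ($C_2$), that the $C_\sigma(S,T)$ form an $R$-basis of $A^\sigma$. Since $\sigma_A$ acts as a permutation of the cellular basis and $p\in R^*$, the averaging idempotent $\tfrac1p\sum_l\sigma_A^l$ exhibits $A^\sigma$ as a direct summand which is free of rank equal to the number of $\langle\sigma_A\rangle$-orbits on $\{C_{S,T}^\lambda\}$, with the orbit-averages as one natural basis. I would then show that the twisted sums $C_\sigma(S,T)$ are a Fourier-type change of that basis: grouping the terms of $C_\sigma(S,T)$ according to the $\langle\sigma_A\rangle$-orbit they traverse, the coefficients $\zeta_\lambda^{kj}$ assemble into a Vandermonde matrix in the primitive $o_T(\lambda)$-th root of unity $\zeta_\lambda=\zeta^{p/o_T(\lambda)}$, which is invertible over $R$ because $o_T(\lambda)$ divides $p\in R^*$ and $\zeta$ is primitive. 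A parallel count, using that the $\langle\sigma_\mathfrak{P}\rangle$-orbit of $\lambda$ has size $o_\lambda$ and that every $\langle\sigma_\lambda\rangle$-orbit on $T(\lambda)$ has the common size $o_T(\lambda)$ by ($c$), shows that the number of triples $((\lambda,k),S,T)$ matches the number of orbits, so the $C_\sigma(S,T)$ are indeed an $R$-basis.

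Next I would establish the straightening law ($C_3$). Given $a\in A^\sigma$, I would expand $a\,C_\sigma(S,T)$ by applying the ordinary-cellular multiplication rule of $A$ to each summand $\sigma_A^l(C_{S,\sigma_\lambda^j(T)}^\lambda)$; the $\sigma_A$-invariance of $a$ together with ($b$) lets one reassemble the right-hand side into a combination of the $C_\sigma(S',T)$ modulo the span of the lower terms, where ``lower'' is now governed by $\trianglelefteq_\sigma$ precisely because passing to $\langle\sigma_\mathfrak{P}\rangle$-orbits collapses $\trianglelefteq$ onto $\trianglelefteq_\sigma$; the resulting coefficients remain independent of $T$. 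For ($C_4$), one first notes that $*$ commutes with $\sigma_A$, since both ($b$) and the ordinary-cellular identity $(C_{S,T}^\lambda)^*=C_{T,S}^\lambda$ give $*\circ\sigma_A=\sigma_A\circ *$ on basis elements and hence on all of $A$. Applying $*$ to the definition of $C_\sigma(S,T)$ then transposes $S$ and $T$ and conjugates each coefficient $\zeta_\lambda^{kj}\mapsto\zeta_\lambda^{-kj}$, which is exactly $C_\sigma(\iota_\lambda(T),\iota_\lambda(S))$ in the $(\lambda,-k)$-block; this is the required compatibility with the poset involution $\iota_\sigma(\lambda,k)=(\lambda,-k)$.

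The main obstacle is ($C_2$): keeping the three interacting orbit structures mutually consistent -- the $\langle\sigma_\mathfrak{P}\rangle$-orbit of $\lambda$, the $\langle\sigma_\lambda\rangle$-orbits on $T(\lambda)$, and the $\langle\sigma_A\rangle$-orbits on basis pairs -- and proving that the Vandermonde/Fourier transform relating $\{C_\sigma(S,T)\}$ to the orbit-average basis is invertible over $R$. Condition ($c$) of the shift automorphism is exactly what guarantees that all $\langle\sigma_\lambda\rangle$-orbits on $T(\lambda)$ share the common size $o_T(\lambda)$, without which the index range $\mathbb{Z}/o_T(\lambda)\mathbb{Z}$ for $k$ would not be well defined; I expect verifying that this uniformity propagates correctly through the change of basis, and that it is compatible with the transposition built into ($b$), to be the most delicate point of the argument.
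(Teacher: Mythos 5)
The paper itself offers no proof of this statement: it is quoted from \cite[Theorem 2.28]{hu2021skew}, so the only benchmark is Hu--Mathas--Rostam's original argument, and your proposal reconstructs essentially that argument --- direct verification of $(C_1)$--$(C_4)$, with $A^{\sigma}$ identified via orbit-sums of the permuted cellular basis and the twisted elements $C_{\sigma}(S,T)$ related to those orbit-sums by an invertible discrete Fourier transform. Two glosses in your sketch should be made explicit, though neither is a flaw. First, the inner sums $\sum_{l=0}^{p-1}\sigma_A^l\bigl(C^{\lambda}_{S,\sigma_{\lambda}^j(T)}\bigr)$ are not orbit-sums but $p/(o_{\lambda}o_T(\lambda))$ times orbit-sums, because each basis element has stabiliser of exactly that order in $\langle\sigma_A\rangle$; since every divisor of $p$ is a unit in $R$, your Vandermonde block remains invertible after this rescaling. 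Second, in $(C_3)$ the cellular expansion produces indices $S'\in T(\lambda)$ outside $T_{\sigma}(\lambda)$; folding $S'=\sigma_{\lambda}^m(S'')$ back onto the representative $S''$ by inserting $\sigma_A^{-mo_{\lambda}}$ inside the full sum over $l$ costs a factor $\zeta_{\lambda}^{km}$, so the structure constants of $A^{\sigma}$ are the twisted sums $\sum_m\zeta_{\lambda}^{km}r_a(\sigma_{\lambda}^m(S''),S)$ (still independent of $T$), while the error terms lie in the $\sigma_A$-stable span of the $C^{\mu}_{U,V}$ with $\mu\triangleleft_{\sigma}\lambda$, whose intersection with $A^{\sigma}$ is exactly the span of the lower skew-basis elements; that is the precise content of your phrase about collapsing $\trianglelefteq$ onto $\trianglelefteq_{\sigma}$.

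The one genuine issue is the point you yourself flag as most delicate: compatibility with the transposition in condition $(b)$ of Definition \ref{shift} as printed here. Your argument is not compatible with it, and neither is the theorem; the transposition is a misprint in this paper's transcription of \cite{hu2021skew}. Indeed, if $\sigma_A(C^{\lambda}_{S,T})=C^{\sigma_{\mathfrak{P}}(\lambda)}_{\sigma_{\mathcal{T}}(T),\sigma_{\mathcal{T}}(S)}$ held literally, then for odd $p$ the relation $\sigma_A^p=\mathrm{id}$ would force $\sigma_{\mathcal{T}}^p(T)=S$ for every pair $S,T\in T(\lambda)$, which is impossible once $|T(\lambda)|>1$ (and $p$ is odd in this paper's own application, e.g.\ $p=3$); and when $p$ is even the transposition merges the $\langle\sigma_A\rangle$-orbits coming from the pairs $(S,T)$ and $(T,S)$, making the proposed elements $C_{\sigma}$ linearly dependent, so $(C_2)$ would fail. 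With the untransposed condition $\sigma_A(C^{\lambda}_{S,T})=C^{\sigma_{\mathfrak{P}}(\lambda)}_{\sigma_{\mathcal{T}}(S),\sigma_{\mathcal{T}}(T)}$ --- which is what the paper's $\sigma_{TL}$ actually satisfies and what Theorem \ref{thshift} implicitly checks --- your orbit count in $(C_2)$ is correct: distinct $j$ give distinct orbits, distinct representative pairs give disjoint orbit families, and the numbers match. The skewness $\iota_{\sigma}(\lambda,k)=(\lambda,-k)$ then arises, exactly as you say at the end, not from any transposition in $(b)$ but from the fact that the anti-involution $*$ inverts the root-of-unity twist.
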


 \section{The skew cellularity of $TL_{r,p,n}$} $\label{sec7}$
 
 In \cite{hu2021skew}, Hu, Mathas and Rostam introduced skew cellular algebras as a generalisation of the original cellular algebras. They define the notion of a ``shift automorphism'' of a cellular algebra and show that the fixed subalgebra has a skew cellular structure (see Theorem \ref{skc} above).
    In this section, we prove that $TL_{r,p,n}$ is skew cellular by constructing a shift automorphism on  $TL_{r,1,n}(q,\zeta)$ and identifying $TL_{r,p,n}$ as the subalgebra consisting of the fixed points of the shift automorphism. 
    
    We first define the shift automorphism on the specialised Temperley-Lieb algebra $TL_{r,1,n}(q,\zeta)$.
    As shown in Theorem $\ref{TLdf}$, $TL_{r,p,n}$ is the fixed-point subalgebra of $TL_{r,1,n}(q,\zeta)$ under $\sigma_{TL}$, which is induced by the automorphism $\sigma'$ of the cyclotomic KLR algebra $R_n^{\Lambda}(\Gamma_{e,p})$ defined above Theorem $\ref{auto}$. So we choose the algebra automorphism $\sigma_{TL}$ as the first term in the shift automorphism $\sigma$. 
    
    Condition (b) in Definition \ref{shift} indicates that the poset automorphism $\sigma_{\mathfrak{P}}$ and the bijection $\sigma_{\mathcal{T}}$ are determined by the cellular structure of $A$ and the algebra automorphism $\sigma_{A}$. However, the poset map $\sigma_{\mathfrak{P}}$ induced by the algebra automorphism $\sigma_{TL}$ with respect to the cellular structure in Theorem \ref{cb419} is not an automorphism of the poset  $(\mathfrak{P}_n,\unlhd)$ with the partial order $\unlhd$ given by (\ref{podf}). As a counterexample, let $\lambda_{(0,0)}$ and $\lambda_{(1,0)} \in \mathfrak{P}_n$ be the 3D multipartitions of $n$ with the $(0,0)^{th}$ and $(1,0)^{th}$ components are the only non-empty ones respectively. We have $\lambda_{(0,0)} \unlhd \lambda_{(1,0)}$ but 
    \begin{equation*}
        \sigma_{\mathfrak{P}}(\lambda_{(1,0)})=\lambda_{(0,0)} \unlhd \lambda_{(p-1,0)}=\sigma_{\mathfrak{P}}(\lambda_{(0,0)}).
    \end{equation*}
   
   We therefore introduce a new cellular structure on the algebra $TL_{r,1,n}(q,\zeta)$. It should be pointed out that the following construction cannot be carried out
    for the original $TL_{r,1,n}$ without specialisation to the case we treat.
    We first introduce a new partial order on the indices on the floor table.
    
    \begin{defn} $\label{2po}$
    	Let $\mathcal{K}=\{0,1,\dots,p-1 \}\times\{0,1,\dots,d-1 \}$ be the index set introduced in \S $\ref{3dmlp}$. Define a total order $\leq $ on $\mathcal{K}$, 
	by stipulating that for $(i_1,l_1),(i_2,l_2)\in \mathcal{K}$:
    	\begin{equation}
    		(i_1,l_1)\leq (i_2,l_2) \text{ if and only if } l_1< l_2 \text{  or } l_1=l_2 \text{ and }i_1\leq i_2.
    	\end{equation}
      Further, define the partial order $\leq'$ on  $\mathcal{K}$, as the unique one satisfying that for $(i_1,l_1),(i_2,l_2)\in \mathcal{K}$:
        \begin{equation}
        	(i_1,l_1)\leq' (i_2,l_2) \text{ if and only if } l_1< l_2  \text{  or } l_1=l_2 \text{ and }i_1= i_2.
        \end{equation}
    \end{defn}
    
     It is obvious that $(i_1,l_1)\leq' (i_2,l_2)$ implies $(i_1,l_1)\leq (i_2,l_2)$. Similarly to the notation in \S $\ref{3dmlp}$, denote a multipartition $\lambda\in \mathfrak{P}_n$ by:
     
     (1) $\lambda_{(i,l)}$, if the $(i,l)^{th}$ component is the only non-empty one in $\lambda$. Equivalently, $\lambda$ consists of a column of $n$ nodes in the $(i,l)^{th}$ position.
     
     (2) $\lambda_{(i_1,l_1),(i_2,l_2)}^{[a]}$, if the $(i_1,l_1)^{th}$ and $(i_2,l_2)^{th}$ components in $\lambda$ are non-empty with $(i_1,l_1)<(i_2,l_2)$ in $(\mathcal{K},\leq )$ and $a=a_{i_1,l_1}-a_{i_2,l_2}$ where $a_{i,l}$ is the number of nodes in the $(i,l)^{th}$ component. It is obvious that $2-n\leq a\leq n-2$ and $a\equiv n$ ($mod$ 2).
     
     We next define two partial orders $\trianglelefteq $ and $\trianglelefteq'$ on $\mathfrak{P}_n$, which respectively correspond to $\leq $ and $\leq' $ on $\mathcal{K}$, as follows:
     
     \begin{defn} $\label{2PO}$
      Define the  two partial orders $\trianglelefteq$ and $\trianglelefteq'$ on the set of 3D multipartitions of $n$ to be the unique partial orders satisfying:
     	
     	(1) If $\lambda$ and $\mu$ are of the form $\lambda_{(i_1,l_1)}$ and $\mu_{(i_3,l_3)}$, then
     		\begin{equation*}
		\begin{aligned}
     			&\lambda_{(i_1,l_1)}\trianglelefteq \mu_{(i_3,l_3)} (resp.\; \lambda_{(i_1,l_1)}\trianglelefteq' \mu_{(i_3,l_3)}) \\
			&\text{  if and only if } (i_1,l_1)\leq  (i_3,l_3) (\;resp.\; (i_1,l_1)\leq'  (i_3,l_3))\text{ in } \mathcal{K}.\\
			\end{aligned}
     		\end{equation*}
     (2) If $\lambda$ and $\mu$ are respectively of the form $\lambda_{(i_1,l_1),(i_2,l_2)}^{[a]}$ and $\mu_{(i_3,l_3)}$, then
     
     	\begin{equation*}
	\begin{aligned}
     		&\lambda_{(i_1,l_1)(i_2,l_2)}^{[a]}\trianglelefteq\mu_{(i_3,l_3)} (resp.\;\lambda_{(i_1,l_1)(i_2,l_2)}^{[a]} \trianglelefteq'\mu_{(i_3,l_3)}) \\
		&\text{  if and only if } (i_1,l_1)\leq  (i_3,l_3) (\;resp.\;  (i_1,l_1) \leq' (i_3,l_3) )\text{ in } \mathcal{K}.\\
		\end{aligned}
     	\end{equation*}
	{ Note that if the multipartitions $\lambda_{(i_1,l_1),(i_2,l_2)}^{[a]}$ and $\mu_{(i_3,l_3)}$ in this case are reversed, there is no relation $\mu_{(i_3,l_3)}\trianglelefteq\lambda_{(i_1,l_1),(i_2,l_2)}^{[a]}$ or $\mu_{(i_3,l_3)}\trianglelefteq' \lambda_{(i_1,l_1),(i_2,l_2)}^{[a]}$.}
	
        (3) If $\lambda$ and $\mu$ are respectively of the form $\lambda_{(i_1,l_1),(i_2,l_2)}^{[a]}$ and $\mu_{(i_3,l_3),(i_4,l_4)}^{[b]}$, then
        
        \begin{equation*}
	\begin{aligned}
     		\lambda_{(i_1,l_1)(i_2,l_2)}^{[a]}&\trianglelefteq\mu_{(i_3,l_3), (i_4,l_4)}^{[b]} (resp.\;\lambda_{(i_1,l_1)(i_2,l_2)}^{[a]} \trianglelefteq'\mu_{(i_3,l_3),(i_4,l_4)}^{[b]}) \text{  if and only if }\\
		& (i_1,l_1)\leq  (i_3,l_3), (i_2,l_2)\leq(i_4,l_4) (\;resp.\;  (i_1,l_1) \leq' (i_3,l_3), (i_2,l_2)\leq'(i_4,l_4)) \text{ in } \mathcal{K},\\
		\end{aligned}
     	\end{equation*}
	 and in addition one of the following holds:
        	
        	($i$) $|a|<|b|$;
        	
        	($ii$) $|a|=|b|$ and $a\geq b$;
        	
        	($iii$) $|a|=|b|$, $a<b$ and $(i_2,l_2)\leq  (i_3,l_3)$  $(\;resp.\;(i_2,l_2)\leq' (i_3,l_3))$.

     \end{defn}

     
   Comparing this definition and Lemmas 2.5,2.6 and 2.7 in \cite{LL22}, we have
     
  \begin{cor} \label{poeq}
        The partial order $\trianglelefteq$  defined above is equivalent to the partial order defined in \cite{LL22} which leads to the cellular structure of $TL_{r,1,n}(q,\zeta)$ in Theorem \ref{cb419}.  
    \end{cor}


    Theorem \ref{cb419} implies $\{ C_{s,t}^{\lambda}\}$ is a cellular basis with respect to the partial order $\trianglelefteq$.
    We next use the second partial order $\trianglelefteq'$ to define a new cellular structure on $TL_{r,1,n}(q,\zeta)$.
     As a direct consequence of the fact that $\leq'$ is covered by $\leq$, $\lambda\trianglelefteq' \mu$ implies $\lambda\trianglelefteq \mu$ for any $\lambda,\mu \in \mathfrak{P}_n$. The converse is not always true. Nevertheless, we have the following lemmas:


     
     \begin{lem} $\label{weaklm}$
     	Let $\lambda,\mu\in\mathfrak{P}_n$ be two 3D multipartitions satisfying $\lambda\mla{\mla{\triangleleft}} \mu \in \mathfrak{P}_n$. 
	If there exist a standard tableau $t$ of shape $\lambda$ and a standard tableau $s$ of shape $\mu$ such that
     	\begin{equation*}
     		e(t)=e(s),
     	\end{equation*}
     	 then we have $\lambda\mla{\mla{\triangleleft'}} \mu$ in $\mathfrak{P}_n$ where $\mla{\mla{\triangleleft'}}$ is the finer partial order defined above.
     \end{lem}
     \begin{proof}
     	As $\mu \in \mathfrak{P}_n$, it has one of the following three shapes:
     	
     	(a) $\mu$ has only one non-empty component, that is $\mu=\mu_{(i,l)}$. In this case, we have $s$ is the unique standard tableau $t^{\mu}$ and
     	\begin{equation}
     		res^{\Lambda}(t^{\mu})=((i,j_l),(i,j_l-1),\dots,(i,j_l-n)).
     	\end{equation}
         So $res^{\Lambda}(t)=res^{\Lambda}(t^{\mu})=((i,j_l),(i,j_l-1),\dots,(i,j_l-n))$, which implies that the shape of $t$ is either of the form $\lambda_{(i,l_1)}$ or $\lambda_{(i,l_2),(i,l_3)}^{[a]}$. By Definition \ref{2PO}, $\lambda\mla{\mla{\triangleleft}} \mu$ implies $(i,l_1)\leq (i,l)$ in the first case and $(i,l_2)\leq (i,l)$ in the second. Comparing the two orders in Definition $\ref{2po}$, we have $(i,l_1)\leq' (i,l)$ in the first case and $(i,l_2)\leq' (i,l)$ in the second. As an immediate consequence, $\lambda\mla{\mla{\triangleleft'}} \mu $ in both cases.
         
         (b)  $\mu$ has two non-empty components and is of the form $\mu=\mu_{(i,l_1),(i,l_2)}^{[a]}$. Using the same argument as in the first case, we have $\lambda=\lambda_{(i,l_3),(i,l_4)}^{[b]}$. As the first terms in all the indices are the same, the two partial orders on the indices, $\leq $ and $\leq'$, are equivalent. Therefore, $\lambda_{(i,l_3),(i,l_4)}^{[b]}\mla{\mla{\triangleleft}} \mu_{(i,l_1),(i,l_2)}^{[a]}$ implies $\lambda_{(i,l_3),(i,l_4)}^{[b]}\mla{\mla{\triangleleft'}} \mu_{(i,l_1),(i,l_2)}^{[a]}$. 
         
         (c) $\mu$ is of the form $\mu=\mu_{(i_1,l_1),(i_2,l_2)}^{[a]}$. There are $\dfrac{1}{2}(n+a)$ terms of the form $(i_1,x)$ and $\dfrac{1}{2}(n-a)$ terms of the form $(i_2,y)$ in the residue of $s$. As $res^{\Lambda}(t)=res^{\Lambda}(s)$,the number of nodes in $\lambda$ with residue of the form $(i_1,x)$ is $\dfrac{1}{2}(n+a)$ and that of the form $(i_2,y)$ is $\dfrac{1}{2}(n-a)$. 
         The restriction on the parameters imposed in (\ref{uures}) guarantees that the nodes in the first two rows of a 3D multipartition have different residues. In other words, the nodes in the first two rows are uniquely determined by their residues. As $s$ and $t$ are standard tableau, the number $1$ is in the first row of both $s$ and $t$. Let $c$ be the smallest number such that $Res_s(c)=(i_2,l_2)$.
         Further, $res^{\Lambda}(t)=res^{\Lambda}(s)$ implies that $Res_{t}(1)=Res_{s}(1)=(i_1,l_1)$. Therefore, $t(1)=s(1)=(1,1,(i_1,l_1))$ where $t(1)$ is the node labelled with $1$ in $t$. Similarly, $t(c)=s(c)=(1,1,(i_2,l_2))$. As $\lambda$ consists of at most two non-empty components, we have $\lambda=\lambda_{(i_1,l_1),(i_2,l_2)}^{[a]}=\mu$ which contradicts $\lambda\mla{\mla{\triangleleft}} \mu$. So there are no standard tableaux $s$ and $t$, such that $e(t)=e(s)$ in this case.
         
         Therefore, If there exists a standard tableau $t$ of shape $\lambda$ and a standard tableau $s$ of shape $\mu$ such that
     	\begin{equation*}
     		e(t)=e(s),
     	\end{equation*}
        the 3D multipartition $\mu$ is of the form either $\mu_{(i,l)}$ or $\mu_{(i,l_1),(i,l_2)}^{[a]}$. In both of these cases, $\lambda\mla{\mla{\triangleleft'}} \mu \in \mathfrak{P}_n$ where 
        $\mla{\mla{\triangleleft'}}$ is the finer partial order in Definition \ref{2PO}.
     \end{proof}
     
     In the following calculation, we will need to deal with some non-standard tableaux. Following Lobos and Ryom-Hansen, we use Garnir tableaux as a tool to deal with these non-standard tableaux. 
     This method is originally due to Murphy in \cite{murphy1995representations}. We start with the definition of a Garnir tableau.
 
 \begin{defn}
 	Let $\lambda \in \mathfrak{P}_n$ be a 3D multipartition of $n$ and $g$ be a $\lambda$-tableau. We call $g$ a Garnir tableau if and only if there is a unique $k,\;\;1\leq k\leq n-1$ such that $g(k)>g(k+1)$ with respect to the order on nodes defined after $(\ref{5.3})$ and this number $k$ is in the same column as $k+1$.
 	
 \end{defn}
It should be remarked that a  Garnir tableau $g$ is not standard but $g\circ s_k$ is, where $g\circ s_k$ is the tableau obtained by swapping $k$ and $k+1$ in $g$. Because a 3D multipartition in $\mathfrak{P}_n$ consists
of at most two components, either of which has a single column, the rules $(ii)$ and $(iii)$ after $(\ref{5.3})$ define a total order on the places on the floor, the poset $(\mathfrak{P}_n,\unlhd)$ is a subset of poset of the one-column $r$-multipartitions with partial order $\unlhd_0$ defined in \cite{LOBOSMATURANA2020106277}. Therefore, we have:

\begin{lem} \cite[Corollary 22]{LOBOSMATURANA2020106277}$\label{41}$
 	Let $t$ be a non-standard tableau of shape $\lambda\in \mathfrak{P}_n$. Then there exists a Garnir tableau $g$ and $w\in \mathfrak{S}_n$ such that $t=g\circ w$ and $l(d(t))=l(d(g))+l(w)$.
 \end{lem}
 
 
      \begin{lem} $\label{garlm}$
 	Let $\lambda\mla{\mla{\triangleleft}} \mu \in \mathfrak{P}_n$ be two 3D multipartitions. We have $\lambda\mla{\mla{\triangleleft'}} \mu$
	 if there exists a standard tableau $t$ of shape $\lambda$ and a tableau $g$ of shape $\mu$, which is either standard or Garnir, such that
 	\begin{equation*}
 		e(t)=e(g).
 	\end{equation*}
 \end{lem}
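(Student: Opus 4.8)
The plan is to mirror the case analysis in the proof of Lemma~\ref{weaklm}, organised according to the three possible shapes of $\mu$, but to replace the comparison of full residue \emph{sequences} used there by a comparison of residue \emph{multisets}. The key observation is that a Garnir tableau $g$ is a filling of the diagram of $\mu$ with the numbers $1,\dots,n$, so $res^{\Lambda}(g)$ is merely a permutation of $res^{\Lambda}(t^{\mu})$; in particular $g$ and $t^{\mu}$ carry the same multiset of residues. Since $e(t)=e(g)$ gives $res^{\Lambda}(t)=res^{\Lambda}(g)$, the multiset of residues of $\lambda$ coincides with that of $\mu$. I would also record at the outset that the restriction~(\ref{uures}) forces the values $j_0,\dots,j_{d-1}\in J$ to be pairwise distinct, since $u_k=q^{pj_k}$ and $u_i/u_j\neq 1$ for $i\neq j$.

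First I would treat the reducible cases, where $\mu$ is of the form $\mu_{(i,l)}$ or $\mu^{[a]}_{(i,l_1),(i,l_2)}$ (cases (a) and (b) of Lemma~\ref{weaklm}). Here every residue in the multiset of $\mu$ has first coordinate $i$, by the formula $Res^{\Lambda}\big((a,1,(i,l))\big)=(i,1-a+j_l)$. Hence every node of $\lambda$ also has residue with first coordinate $i$, so every non-empty box of $\lambda$ lies in the column $\{(i,l):l\in L\}$ of the floor. On boxes sharing the first coordinate $i$ the orders $\leq$ and $\leq'$ of Definition~\ref{2po} coincide, and consequently $\trianglelefteq$ and $\trianglelefteq'$ (Definition~\ref{2PO}) agree on 3D multipartitions all of whose boxes have first coordinate $i$. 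Thus $\lambda\triangleleft\mu$ immediately yields $\lambda\triangleleft'\mu$, exactly as in Lemma~\ref{weaklm}.

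The substantive case is the irreducible one, $\mu=\mu^{[a]}_{(i_1,l_1),(i_2,l_2)}$ with $i_1\neq i_2$ (case (c)), and the plan is to show it cannot occur under the hypothesis, hence is vacuous. Because $i_1\neq i_2$, the residues split cleanly by first coordinate: the multiset of $\mu$ consists of the run $(i_1,j_{l_1}),(i_1,j_{l_1}-1),\dots$ of length $\tfrac12(n+a)$ together with the run $(i_2,j_{l_2}),\dots$ of length $\tfrac12(n-a)$. Since $\lambda$ has the same residue multiset and at most two non-empty boxes, its first coordinates must be exactly $\{i_1,i_2\}$ with the matching counts, so $\lambda$ is itself irreducible; and since the $j_l$ are distinct, each run determines both its box and its height uniquely. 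This reconstructs $\lambda=\mu$, contradicting $\lambda\triangleleft\mu$. Combining this with the reducible cases establishes $\lambda\triangleleft'\mu$.

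I expect the reconstruction step in the irreducible case to be the main obstacle: one must check that the residue multiset genuinely pins down an irreducible 3D multipartition, which relies essentially on the distinctness of the $j_l$ (from~(\ref{uures})) to prevent the two first-coordinate runs from being realised by different boxes or heights. A subsidiary point worth verifying is that passing from $t^{\mu}$ to a Garnir tableau cannot disturb the first-coordinate data of the residue multiset; this is immediate here because a Garnir belt is contained in a single component, all of whose nodes share one first coordinate, so the Garnir permutation only rearranges residues within a fixed first-coordinate block.
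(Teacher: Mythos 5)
Your handling of the reducible cases is sound: when every residue of $\mu$ has the same first coordinate $i$, the same is true of every node of $\lambda$, and the orders $\trianglelefteq$ and $\trianglelefteq'$ of Definition \ref{2PO} agree on multipartitions supported in a single column of the floor, which is exactly how the paper disposes of cases (a) and (b). The gap is in the irreducible case, and it is fatal to the multiset reduction. Your key claim --- ``since the $j_l$ are distinct, each run determines both its box and its height uniquely'' --- is false whenever $e>0$ divides the length of a run: a run $j_l, j_l-1,\dots$ of length $m$ in $J=\mathbb{Z}/e\mathbb{Z}$ with $e\mid m$ covers every element of $J$ exactly $m/e$ times, so its multiset is independent of the starting point $j_l$, and distinctness of the $j_l$'s is of no help. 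Consequently the residue multiset does not pin down an irreducible 3D multipartition. For instance, take $e=12$, $p=d=2$, $j_0=0$, $j_1=3$ (compatible with (\ref{uures}) and the other parameter restrictions), $n=24$: the distinct irreducible multipartitions $\lambda=\lambda^{[0]}_{(0,0),(1,0)}$ and $\mu=\mu^{[0]}_{(1,0),(0,1)}$ have identical residue multisets, satisfy $\lambda\triangleleft\mu$, and are incomparable under $\trianglelefteq'$. So your reconstruction step cannot yield the contradiction $\lambda=\mu$; worse, for such a pair the desired conclusion $\lambda\triangleleft'\mu$ is simply false, so case (c) can only be handled by showing it is \emph{vacuous}, and multiset data alone cannot show this, since the multisets really are equal.

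What is needed --- and what the paper actually does --- is to use the equality of residue \emph{sequences} together with positional information about $t$ and $g$. Let $c$ be minimal such that $Res_g(c)$ has first coordinate different from that of $Res_g(1)$. Because $g$ is a Garnir tableau of column shapes, each of the entries $1$ and $c$ lies either in the first row of its component, or in the second row with its successor directly above it; because $t$ is standard, $1$ and $c$ lie in the first row of $t$. Since the parameter restrictions make the residues of nodes in the first two rows pairwise distinct, the equalities $Res_t(1)=Res_g(1)$ and $Res_t(c)=Res_g(c)$ force the nodes containing $1$ and $c$ to occupy the same positions in $t$ and in $g$; counting residues by first coordinate then gives $\lambda=\mu$, contradicting $\lambda\triangleleft\mu$. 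This positional step is precisely the information your reduction to multisets discards, and without it the irreducible case cannot be closed.
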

 
 \begin{proof}
 	The proof of cases (a) and (b) defined in the proof of Lemma \ref{weaklm} remains unchanged. 
    
	We claim that the third case (c) does not occur.  If $\mu$ is of form $\mu=\mu_{(i_1,l_1),(i_2,l_2)}^{[a]}$, there are $\dfrac{1}{2}(n+a)$ terms of the form $(i_1,x)$ and $\dfrac{1}{2}(n-a)$ terms of the form $(i_2,y)$ in the residue of $t^{g}$. Let $i$ be such that $Res_g(1)=(i,j)$ and $c$ be the smallest number such that $Res_g(c)=(i',j')$ where $i'\neq i$. As $g$ is a Garnir tableau, $s_k\circ g$ is standard for some $1\leq k\leq n-1$. {If $k=1$ or $c$, $k$ is in the second row and $k+1$ is in the first row of the same component. Otherwise, both $1$ and $c$ are in the first row of $g$. Therefore, $1$ and $c$ are always in the first two row of $g$}
As $res^{\Lambda}(t)=res^{\Lambda}(g)$, we have $Res_t(1)=Res_g(1)$ and $Res_t(c)=Res_g(c)$. Both $1$ and $c$ are in the first row of $t$ as $t$ is standard. As the nodes in the first two rows are uniquely determined by their residue, the nodes containing $1$ and $c$ are in the same position in $t$ and $g$. So we have $\lambda=\lambda_{(i_1,l_1),(i_2,l_2)}^{[a]}=\mu$, which  contradicts $\lambda\mla{\mla{\triangleleft}} \mu$. So there is no standard tableau $t$ of shape $\lambda\mla{\mla{ \triangleleft}} \mu $ such that {$e(t)=e(g)$} for some Garnir tableau $g$ of shape $\mu$.
 \end{proof}


 Before starting the main theorem, we introduce a notation that will be used in the calculation. Denote $N=\{1,2,\dots,n-1\}$. For $U^{(k)}=(U^{(k)}_1,U^{(k)}_2,\dots,U^{(k)}_k)\in N^k$ where $k$ is a non-negative integer, let $\psi_{U^{(k)}}=\psi_{U^{(k)}_1}\psi_{U^{(k)}_2}\dots \psi_{U^{(k)}_k}$ with $\psi_i(i\in N)$ being the KLR generators in Definition $\ref{klrdf}$. We use the following equivalence relation to describe the difference between two elements in the cyclotomic KLR algebra.

  \begin{defn} \label{dfequiv}
  	For any positive integer $k$, define an equivalence relation $\stackrel{k}{\sim}$ on $N^k$ as follows: 
  We write $V^{(k)}\stackrel{k}{\sim}U^{(k)}$ if 
  	\begin{equation*}
  		\psi_{V^{(k)}}^*e_{\lambda}-\psi_{U^{(k)}}^*e_{\lambda}=\sum_{l<k} c(W^{(l)})\psi_{W^{(l)}}^*e_{\lambda}
  	\end{equation*}
     where $c(W^{(l)})\in R$ and $W^{(l)}$ runs over $N^l$ for all $0\leq l<k$.
  \end{defn}
  The following lemma shows that two reduced expressions of the same element lead to equivalent sequences.
\begin{lem} $\label{45}$
	For $w\in \mathfrak{S}_n$, let $s_{V^{(k)}}$ and $s_{U^{(k)}}$ be two reduced expressions of $w$ where $k=l(w)$, then $V^{(k)}\stackrel{k}{\sim}U^{(k)}$.
\end{lem}
\begin{proof}
	As both $s_{V^{(k)}}$ and $s_{U^{(k)}}$ are reduced expressions of the same element $w$, they can be transformed to each other by braid relations. These relations correspond to $(\ref{22eq})$ and $(\ref{26eq})$. The error terms only occur in $(\ref{26eq})$. They lead to strictly shorter sequences. By Definition \ref{dfequiv}, we have $V^{(k)}\stackrel{k}{\sim}U^{(k)}$.
\end{proof}

    For $\lambda\in\mathfrak{P}_n$ and $s,t\in Std(\lambda)$, let $C_{s,t}^{\lambda}\in TL_{r,1,n}(q,\zeta)$ be the element defined in $(\ref{eq104})$. 
    Then by Theorem \ref{cb419}, $\{C_{s,t}^{\lambda}| \lambda\in\mathfrak{P}_n,s,t\in Std(\lambda)\}$ is a graded cellular basis of $TL_{r,1,n}(q,\zeta)$ with respect to the partial order $\trianglelefteq $. We next show that this cellularity still holds with respect to the finer partial order $\trianglelefteq' $, that is,
      
      \begin{thm}\label{8.2.5}
      Let $TL_{r,1,n}(q,\zeta)$ be the specialisation of $TL_{r,1,n}$ in section \ref{8.1.1}.
      	Let $(\mathfrak{P}_n,\trianglelefteq')$ be the poset defined in Definition \ref{2PO} and $C_{s,t}^{\lambda}$ be the element defined in $(\ref{eq104})$. Then $\{C_{s,t}^{\lambda}| \lambda\in\mathfrak{P}_n,s,t\in Std(\lambda)\}$ is a graded cellular basis of $TL_{r,1,n}(q,\zeta)$ over $R$ with respect to with respect to the cell datum $(\mathfrak{P}_n, Std, C,deg)$, {where $\mathfrak{P}_n$ is the poset endowed with 
	the partial order $\trianglelefteq'$}.
      \end{thm}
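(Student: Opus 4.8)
The plan is to leverage the cellularity already available for the coarser order $\trianglelefteq$ and upgrade it to $\trianglelefteq'$. Since $\trianglelefteq'$ refines $\trianglelefteq$ in the sense that $\lambda\triangleleft'\mu$ implies $\lambda\triangleleft\mu$ (noted after Definition \ref{2PO}), the span $A_{\triangleleft'\lambda}$ of the $C^{\mu}_{X,Y}$ with $\mu\triangleleft'\lambda$ is contained in the corresponding span $A_{\triangleleft\lambda}$ for $\trianglelefteq$. The axioms ($C_1$) (homogeneity), ($C_2$) (that the $C^{\lambda}_{s,t}$ form a basis) and ($C_4$) (the anti-involution $*$ with $(C^{\lambda}_{s,t})^{*}=C^{\lambda}_{t,s}$) make no reference to the partial order, so they continue to hold verbatim for the datum $(\mathfrak{P}_n,Std,C,deg)$ ordered by $\trianglelefteq'$. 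Hence the only thing to verify is the straightening axiom ($C_3$) relative to the \emph{smaller} ideal $A_{\triangleleft'\lambda}$; moreover the shape-$\lambda$ part of an expansion is intrinsic, so the structure constants $r_a(s',s)$ may be taken to be exactly those produced by Theorem \ref{cb419}.

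By the cellularity with respect to $\trianglelefteq$ recorded above (a consequence of Theorem \ref{cb419}), for every $a$ and every $C^{\lambda}_{s,t}$ we already have
\[
aC^{\lambda}_{s,t}-\sum_{s'\in Std(\lambda)}r_a(s',s)\,C^{\lambda}_{s',t}\in A_{\triangleleft\lambda}.
\]
So it suffices to prove the key claim: \emph{if $C^{\mu}_{X,Y}$ occurs with non-zero coefficient in the cellular-basis expansion of $aC^{\lambda}_{s,t}$ and $\mu\triangleleft\lambda$, then in fact $\mu\triangleleft'\lambda$.} Applying $*$ together with (\ref{eq 106*}) it is enough to treat left multiplication, and it is enough to let $a$ run over the KLR generators $e(\mathbf{i})$, $y_i$, $\psi_i$, since these generate the algebra. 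Multiplication by $e(\mathbf{i})$ and $y_i$ does not alter residue sequences, so any lower shape they introduce again comes paired with a residue coincidence; the combinatorially substantive case is multiplication by the $\psi_i$, which is governed by the graded (Garnir) straightening relations underlying Theorem \ref{cb419}.

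The heart of the argument is then to read off, from this straightening, the residue coincidence that accompanies each strictly smaller shape $\mu$. Tracking residue sequences through the $\psi$-moves, a term $C^{\mu}_{X,Y}$ of shape $\mu\triangleleft\lambda$ can be produced only in one of two ways: either a standard tableau of shape $\mu$ carries the cell idempotent $e_{\lambda}$ (the direct idempotent-matching case), or it is created through a Garnir relation, in which a standard tableau of shape $\mu$ shares its residue sequence with a Garnir tableau of shape $\lambda$. These are exactly the hypotheses of Lemma \ref{weaklm} and Lemma \ref{garlm} respectively (with the roles of the smaller and larger multipartition interchanged), and in each case the relevant lemma converts $\mu\triangleleft\lambda$ into $\mu\triangleleft'\lambda$. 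Consequently every lower-order remainder lies in $A_{\triangleleft'\lambda}$, so ($C_3$) holds with respect to $\trianglelefteq'$ and $\{C^{\lambda}_{s,t}\}$ is a graded cellular basis for $(\mathfrak{P}_n,\trianglelefteq')$.

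The step I expect to be the main obstacle is precisely this last dictionary: showing that the only mechanisms by which a strictly smaller shape $\mu$ can enter the expansion are the two residue coincidences isolated in Lemmas \ref{weaklm} and \ref{garlm}, and in particular that no lower shape can appear without a standard tableau of shape $\mu$ matching either $e_{\lambda}$ or a Garnir tableau of shape $\lambda$. This requires a careful, term-by-term bookkeeping of residues in the graded Garnir straightening in $TL_{r,1,n}(q,\zeta)$; once that bookkeeping is in place, the two lemmas do the combinatorial work of forcing the finer order and the theorem follows.
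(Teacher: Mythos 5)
Your overall strategy coincides with the paper's: reduce the verification to axiom ($C_3$), check it only on the KLR generators $e(\mathbf{i})$, $y_k$, $\psi_k$, and then use Lemmas \ref{weaklm} and \ref{garlm} to upgrade each lower-order term from $\triangleleft$ to $\triangleleft'$. However, you have left unproven exactly the step on which the whole argument turns, and you say so yourself: you assert that every shape $\mu\triangleleft\lambda$ appearing in the expansion must come with a residue coincidence matching either $e_{\lambda}$ or a Garnir tableau of shape $\lambda$, but you give no mechanism forcing this, deferring it to an unspecified ``term-by-term bookkeeping.'' As stated, this is a genuine gap: nothing in the coarse cellularity of Theorem \ref{cb419} by itself guarantees that a lower term $C^{\mu}_{u,v}$ in the expansion of $aC^{\lambda}_{s,t}$ carries any particular idempotent, so the hypotheses of the two lemmas are not yet available.

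The paper closes this gap with a concrete two-stage argument that your proposal is missing. First, by Lemma 6.7 of \cite{LL22}, multiplication of $C^{\lambda}_{s,t}$ by $y_k$ or $\psi_k$ produces, besides shape-$\lambda$ terms, only elements of the two-sided ideals generated by $y_ie_{\lambda}$ ($1\leq i\leq n$) and by $e(g)$ for $g$ a Garnir tableau of shape $\lambda$; this is what organizes all lower-order contributions into exactly two families. Second, for each family one expands in the coarse cellular basis using Lemmas 17, 35 and 37 of \cite{LOBOSMATURANA2020106277}, and then applies an idempotent-absorption trick: since $y_ie_{\lambda}=e_{\lambda}\,y_ie_{\lambda}$ and $e(g)=e(g)^2$, one may multiply the expansion on the left by $e_{\lambda}$ (resp.\ $e(g)$), and because $e_{\lambda}C^{\mu}_{u,v}=e_{\lambda}e(u)\psi_{d(u)}^*\psi_{d(v)}$ with the $e(\mathbf{i})$ mutually orthogonal, every surviving term satisfies $e(u)=e_{\lambda}$ (resp.\ $e(u)=e(g)$). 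It is precisely this forced equality of idempotents that puts each surviving $\mu\triangleleft\lambda$ into the hypotheses of Lemma \ref{weaklm} (resp.\ Lemma \ref{garlm}), yielding $\mu\triangleleft'\lambda$. Without this step --- or some substitute for it --- your ``key claim'' remains an assertion rather than a proof, so you should incorporate the reduction via Lemma 6.7 of \cite{LL22} and the idempotent absorption argument to complete the proof.
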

      \begin{proof}
      	Comparing with the cellular structure in Theorem \ref{cb419}, it is enough to show that for any $a\in TL_{r,1,n}(q,\zeta)$, $\lambda\in \mathfrak{P}_n$ and $s,t\in Std (\lambda)$, we have
      	\begin{equation}
      		aC_{s,t}^{\lambda}=\sum_{s'\in Std(\lambda)}r_a(s',s)C_{s',t}^{\lambda}+\sum_{\mu \triangleleft' \lambda,u,v\in Std(\mu)}c_a(s,t,u,v)C_{u,v}^{\mu}
      	\end{equation}
      	where $r_a(s',s)\in R$ does not depend on $t$ and $c_a(s,t,u,v)\in R$. 
      	We only need to check the cases where $a$ is in the generating set of $TL_{r,1,n}(q,\zeta)$ as a KLR algebra.
      	
      	If $a=e(\mathbf{i})$, $	aC_{s,t}^{\lambda}$ is either 0 or $C_{s,t}^{\lambda}$ by $(\ref{224eq})$ and $(\ref{sym}).$
      	
      		If $a=y_k$, the following equation is obtained by swapping $y_i$ and $\psi_r$ finitely many times using $(\ref{23eq})$ and $(\ref{24eq})$.
      	      	\begin{equation} 
      		y_kC_{s,t}^{\lambda}=y_i\psi_{d(s)}^*e_{\lambda}\psi_{d(t)}= \psi_{d(s)}^*y_je_{\lambda}\psi_{d(t)}+\sum_{l(V)<l({d(s)})}c(V)\psi_{V}^*e_{\lambda}\psi_{d(t)}
      	\end{equation}
      	where $c(V)$ is 1 or -1. 


        For the second part, let $U^{(k)}$ be the sequence of length $k$ such that $\psi_{V}^*=\psi_{U^{(k)}}^*$. We prove the following equation by induction on $k$:
         \begin{equation} \label{89}
      	 	\psi_{U^{(k)}}^*e_{\lambda}\psi_{d(t)}=\sum_{s'\in Std(\lambda)}r_a(s',s)C_{s',t}^{\lambda}+\sum_{i=1}^{n} Y_i+\sum_{g\in Gar(\lambda)} D_g
      	 \end{equation}
     	where $Y_i$ is of the from $\sum_{U_1^{(k_1)},U_2^{(k_2)}}\psi_{U_1^{(k_1)}}^*y_ie_{\lambda}\psi_{U_2^{(k_2)}}$, and 
	
	$D_g$ is of the from $\sum_{U_3^{(k_3)},U_4^{(k_4)}}\psi_{U_3^{(k_3)}}^*e(g)\psi_{U_4^{(k_4)}}$ 
	with $g$ running over the Garnir tableaux of shape $\lambda$. We will omit the superscripts of $U_1$ to $U_4$ in the following proof since we are not interested in the length of these sequences.
        
        If $k=0$, the statement is evident.
        Assume by way of induction, that the assertion is true for any sequence $U^{(l)}$ with $l<k$ for a positive integer $k$. For a sequence $U^{(k)}$, we consider three cases:
        
	1. Assume $s_{U^{(k)}}$ is a reduced expression of some $w\in \mathfrak{S}_n$ and $t^{\lambda}\circ w$ is a standard tableau. Denote by $s$ the standard tableau $t^{\lambda}\circ w$. Let $V^{(k)}$ be the sequence such that $	\psi_{V^{(k)}}^*$ is the chosen $\psi_{d(s)}^*$ in the definition of the cellular basis. Then $s_{V^{(k)}}$ and $s_{U^{(k)}}$ are two reduced expressions of $w$ with $k=l(w)$, so we have $V^{(k)}\stackrel{k}{\sim}U^{(k)}$ by Lemma $\ref{45}$. Thus,
	\begin{equation}
		\psi_{U^{(k)}}^*e_{\lambda}=\psi_{d(s)}^*e_{\lambda}+\sum_{l<k} c(W^{(l)})\psi_{W^{(l)}}^*e_{\lambda}.
	\end{equation}
    By the inductive assumption, $\psi_{W^{(l)}}^*e_{\lambda}\psi_{d(t)}$ is of the form in $(\ref{89})$.
    Therefore, $\psi_{U^{(k)}}^*e_{\lambda}\psi_{d(t)}$ can be transformed into the form in $(\ref{89})$.
    
    2. Assume $s_{U^{(k)}}$ is a reduced expression of some $w\in \mathfrak{S}_n$, but $t^{\lambda}\circ w$ is not a standard tableau. By Lemma $\ref{41}$, there exists a Garnir tableau $g$ and an element $w_1\in \mathfrak{S}_n$ such that $t^{\lambda}\circ w=g\circ w_1$. Let $V(g)$ and $V(w_1)$ be two sequences consisting of numbers in $\{1,2,\dots,n-1 \} $ such that $s_{V(g)}$ and $s_V(w_1)$ are reduced expressions of $d(g)$ and $w_1$ respectively. Let $V^{(k)}=V(g)V(w_1)$ be the combination of these two sequences. Then $s_{V^{(k)}}$ is a reduced expression of $w$ as well. By Lemma $\ref{45}$, $V^{(k)}\stackrel{k}{\sim}U^{(k)}$. So we have
    \begin{equation*}
    \begin{aligned}
    	\psi_{U^{(k)}}^*e_{\lambda}
    	&=\psi_{V^{(k)}}^*e_{\lambda}+\sum_{l<k} c(W^{(l)})\psi_{W^{(l)}}^*e_{\lambda}\\
    	&=\psi_{V(w_1)}^*\psi_{V(g)}^*e_{\lambda}+\sum_{l<k} c(W^{(l)})\psi_{W^{(l)}}^*e_{\lambda}\\
    	&=\psi_{V(w_1)}^*e(g)\psi_{V(g)}^*+\sum_{l<k} c(W^{(l)})\psi_{W^{(l)}}^*e_{\lambda}	.
    \end{aligned}        
    \end{equation*}

    The first term is of the from $\sum_{U_3,U_4}\psi_{U_3}^*e(g)\psi_{U_4}$ and the other terms satisfy that the length of the sequence decreases strictly, so the inductive assumption can be used.

    3. If $s_{U^{(k)}}$ is not a reduced expression of any $w\in \mathfrak{S}_n$, let $l$ be the largest number such that $s_{U^{(k)}|_l}$ is a reduced expression of some $w\in \mathfrak{S}_n$ where $U^{(k)}|_l$ is the sub-sequence of $U^{(k)}$ consisting of the first $l$ terms. By the exchange condition, there exists a sequence $V^{(l)}$ ending with $U^{(k)}_{l+1}$ such that $s_{V^{(l)}}$ is a reduced expression for $s_{U^{(k)}|_l}$. By Lemma $\ref{45}$, $V^{(l)}\stackrel{k}{\sim}U^{(k)}|_l$. Therefore, we have
    \begin{equation*}
       \begin{aligned}
    	\psi_{U^{(k)}}^*e_{\lambda}&=(\psi_{U^{(k)}|_l}\psi_{U^{(k)}_{l+1}}\psi_{U^{(k-l-1)}})^*e_{\lambda}\\
    	&=(\psi_{V^{(l)}}\psi_{U^{(k)}_{l+1}}\psi_{U^{(k-l-1)}})^*e_{\lambda}+\sum_{l<k} c(W^{(l)})\psi_{W^{(l)}}^*e_{\lambda}\\
    	&=(\psi_{V^{(l)}|_{l-1}}\psi_{U^{(k)}_{l+1}}^2\psi_{U^{(k-l-1)}})^*e_{\lambda}+\sum_{l<k} c(W^{(l)})\psi_{W^{(l)}}^*e_{\lambda}\\
    	&=\psi_{U^{(k-l-1)}}^*e(i)\psi_{U^{(k)}_{l+1}}^2\psi_{V^{(l)}|_{l-1}}^*+\sum_{l<k} c(W^{(l)})\psi_{W^{(l)}}^*e_{\lambda}	.
    \end{aligned}     
    \end{equation*}

    For the same reason as above, we only need to deal with the first term. By ($\ref{25eq}$), $e(i)\psi_{U^{(k)}_{l+1}}^2$ is $0$, $e(i)$, $	(y_{s+1}-y_s)e(i) $ or 	$(y_s-y_{s+1})e(i) $. The first case is trivial. The second case leads to a strictly shorter sequence, so the inductive assumption can be used. If we get $	(y_{s+1}-y_s)e(i) $ or 	$(y_s-y_{s+1})e(i) $, ($\ref{23eq}$) can help to move $y_s$ to the right to get some elements $Y_i$. The error term will also lead to a strictly shorter sequence which is covered by the inductive assumption.

    If $a=\psi_l$, we have
    \begin{equation}
        aC_{s,t}^{\lambda}=\psi_l \psi_{d(s)}^*e_{\lambda}\psi_{d(t)}.
    \end{equation}
    Let $U^{(k)}$ be the sequence of length $k$ such that $\psi_{U^{(k)}}^*=\psi_l \psi_{d(s)}^*$, we can use the same induction on $k$ as above to prove:
     	\begin{equation} 
     		aC_{s,t}^{\lambda}=\sum_{s'\in Std(\lambda)}r_a(s',s)C_{s',t}^{\lambda}+\sum_{i=1}^{n} Y_i+\sum_{g\in Gar(\lambda)} D_g.
      	\end{equation}
        where $Y_i$ is of the from $\sum_{U_1,U_2}\psi_{U_1}^*y_ie_{\lambda}\psi_{U_2}$ and $D_g$ is of the from $\sum_{U_3,U_4}\psi_{U_3}^*e(g)\psi_{U_4}$ with $g$ running over the Garnir tableaux of shape $\lambda$.
        
        
      	
      	
      	We next show that 
      	      	\begin{equation}
      		y_ie_{\lambda}=\sum_{\mu \triangleleft' \lambda,u,v\in Std(\mu)}c_i(u,v)C_{u,v}^{\mu}
      	\end{equation}
      for $1\leq i\leq n$, and
            	\begin{equation}
      	e(g)=\sum_{\mu \triangleleft' \lambda,u,v\in Std(\mu)}c_g(u,v)C_{u,v}^{\mu}
      \end{equation}
      for $g\in Gar(\lambda)$.
      We treat $y_ie_{\lambda}$ first. By Lemma 17 and Lemma 37 in \cite{LOBOSMATURANA2020106277}, we have
      	\begin{equation}
      	y_ie_{\lambda}=\sum_{\mu \triangleleft \lambda,u,v\in Std(\mu)}c_i(u,v)C_{u,v}^{\mu}.
      \end{equation}
     As $y_ie_{\lambda}=e_{\lambda}y_i$ and $e_{\lambda}$ is an idempotent, we have
      \begin{align*}
      	y_ie_{\lambda}&=e_{\lambda}y_ie_{\lambda}\\
      	    &=\sum_{\mu \triangleleft \lambda,u,v\in Std(\mu)}c_i(u,v)e_{\lambda}C_{u,v}^{\mu}\\
      	    &=\sum_{\mu \triangleleft \lambda,u,v\in Std(\mu)}c_i(u,v)e_{\lambda}\psi_{d(u)}^*e_{\mu}\psi_{d(v)}\\
      	    &=\sum_{\mu \triangleleft \lambda,u,v\in Std(\mu)}c_i(u,v)e_{\lambda}e(u)\psi_{d(u)}^*\psi_{d(v)}\\
      	    &=\sum_{\mu \triangleleft \lambda,u,v\in Std(\mu),e_{\lambda}=e(u)}c_i(u,v)e(u)\psi_{d(u)}^*\psi_{d(v)}\\
      	    &=\sum_{\mu \triangleleft \lambda,u,v\in Std(\mu),e_{\lambda}=e(u)}c_i(u,v)C_{u,v}^{\mu}\\
      	    &=\sum_{\mu \triangleleft' \lambda,u,v\in Std(\mu)}c_i(u,v)C_{u,v}^{\mu}
      \end{align*}
      where the last step is by Lemma $\ref{weaklm}$ and the coefficients $c_i(u,v)$ are zero if $e_{\lambda}\neq e(u)$ in the last summand.
      
      For $e(g)$, Lemma 35 in \cite{LOBOSMATURANA2020106277} implies that
      \begin{equation*}
      	e(g)=\sum_{\mu \triangleleft \lambda,u,v\in Std(\mu)}c_g(u,v)C_{u,v}^{\mu}.
      \end{equation*}
      As $e(g)$ is an idempotent, we have
      \begin{align*}
      	e(g)&=e(g)^2\\
      	    &=e(g)\sum_{\mu \triangleleft \lambda,u,v\in Std(\mu)}c_g(u,v)C_{u,v}^{\mu}\\
      	    &=\sum_{\mu \triangleleft \lambda,u,v\in Std(\mu)}c_g(u,v)e(g)\psi_{d(u)}^*e_{\mu}\psi_{d(v)}\\
      	    &=\sum_{\mu \triangleleft \lambda,u,v\in Std(\mu)}c_g(u,v)e(g)e(u)\psi_{d(u)}^*\psi_{d(v)}\\
      	    &=\sum_{\mu \triangleleft \lambda,u,v\in Std(\mu),e(u)=e(g)}c_g(u,v)e(u)\psi_{d(u)}^*\psi_{d(v)}\\
      	    &=\sum_{\mu \triangleleft \lambda,u,v\in Std(\mu),e(u)=e(g)}c_g(u,v)C_{u,v}^{\mu}\\
      	    &=\sum_{\mu \triangleleft' \lambda,u,v\in Std(\mu)}c_g(u,v)C_{u,v}^{\mu}\\
      \end{align*}
        where the last step is by Lemma $\ref{garlm}$ and the coefficients $c_g(u,v)$ are zero if $e_{g}\neq e(u)$ in the last sum.

        
We next show that for any sequences $U_1, U_2\in N^{\infty}$ of finite length, $\mu\in \mathfrak{P}_n$ and $s,t\in Std (\mu)$, we have
\begin{equation*}
    \psi_{U_1}^*C_{s,t}^{\mu}\psi_{U_2} =\sum_{\nu \trianglelefteq' \mu,u,v\in Std(\nu)}c(U_1,U_2,s,t,u,v)C_{u,v}^{\nu}.
\end{equation*}
We have
\begin{align*}
      	\psi_{U_1}^*C_{s,t}^{\mu}
      	    &=\psi_{U_1}^*\psi_{d(s)}^*e_{\mu}\psi_{d(t)} \\
      	    &=\psi_{U_1}^*\psi_{d(s)}^*\psi_{d(t)}e(t)\\
            &=\psi_{U_1}^*\psi_{d(s)}^*\psi_{d(t)} e(t)^2\\
            &=\psi_{U_1}^*C_{s,t}^{\mu}e(t)\\
            &=\sum_{\nu \trianglelefteq \mu,u,v\in Std(\nu)}c(U_1,s,t,u,v)C_{u,v}^{\nu}e(t)\\
            &=\sum_{\nu \trianglelefteq \mu,u,v\in Std(\nu)}c(U_1,s,t,u,v)\psi_{d(u)}^*\psi_{d(v)}e(v)e(t).
      \end{align*}
The second last step is from the cellular structure associated with the partial order $\trianglelefteq$. Since $t$ is a standard tableau, Lemma $\ref{weaklm}$ implies:
\begin{align*}
      	\psi_{U_1}^*C_{s,t}^{\mu} 
            &=\sum_{\nu \trianglelefteq \mu,u,v\in Std(\nu)}c(U_1,s,t,u,v)\psi_{d(u)}^*\psi_{d(v)}e(v)e(t)\\
            &= \sum_{\nu \trianglelefteq \mu,u,v\in Std(\mu),e(v)=e(t)}c(U_1,s,t,u,v)\psi_{d(u)}^*\psi_{d(v)}e(v)e(t)\\
            &= \sum_{\nu \trianglelefteq \mu,u,v\in Std(\mu),e(v)=e(t)}c(U_1,s,t,u,v)\psi_{d(u)}^*\psi_{d(v)}e(v)\\
            &=\sum_{\nu \trianglelefteq' \mu,u,v\in Std(\nu)}c(U_1,s,t,u,v)C_{u,v}^{\nu}.
      \end{align*}
Using a similar argument, we have
\begin{equation*}
      	C_{s,t}^{\mu} \psi_{U_2}
            =\sum_{\nu \trianglelefteq' \mu,u,v\in Std(\nu)}c(U_2,s,t,u,v)C_{u,v}^{\nu}.
      \end{equation*}
Therefore, we have
\begin{align*}
      	\psi_{U_1}^*C_{s,t}^{\mu} \psi_{U_2}
            &=\sum_{\tau \trianglelefteq' \mu,x,y\in Std(\tau)}c(U_1,s,t,x,y)C_{x,y}^{\tau}\psi_{U_2}\\
            &=\sum_{\tau \trianglelefteq' \mu,x,y\in Std(\tau)}\sum_{\nu \trianglelefteq' \tau,u,v\in Std(\nu)}c(U_1,U_2,s,t,x,y,u,v)C_{u,v}^{\nu}\\
            &=\sum_{\nu \trianglelefteq' \mu,u,v\in Std(\nu)}c(U_1,U_2,s,t,u,v)C_{u,v}^{\nu}.            
      \end{align*}
      Thus the terms $T_i$ and $D_g$ are all lower terms with respect to the finer partial order $\trianglelefteq'$.
      
        
        Therefore, for any $a\in TL_{r,1,n}(q,\zeta)$, $\lambda\in \mathfrak{P}_n$ and $s,t\in Std (\lambda)$, we have
        \begin{equation}
        	aC_{s,t}^{\lambda}=\sum_{s'\in Std(\lambda)}r_a(s',s)C_{s',t}^{\lambda}+\sum_{\mu \triangleleft' \lambda,u,v\in Std(\mu)}c_a(s,t,u,v)C_{u,v}^{\mu}.
        \end{equation}
      Therefore, $\{C_{s,t}^{\lambda}| \lambda\in\mathfrak{P}_n,s,t\in Std(\lambda)\}$ is a graded cellular basis of $TL_{r,1,n}(q,\zeta)$ over $R$ with respect to $\trianglelefteq'$.
      \end{proof}
      
      Now consider the corresponding poset automorphism $\sigma_{\mathfrak{P}_n}$ on the poset $(\mathfrak{P}_n,\trianglelefteq')$. Let $\sigma_{\mathfrak{P}_n}:\mathfrak{P}_n\mapsto\mathfrak{P}_n$ be the bijective map given by:
      \begin{align*}
      	\sigma_{\mathfrak{P}_n}(\lambda_{(i,l)})&=\lambda_{(i-1,l)},\\
      	\sigma_{\mathfrak{P}_n}(\lambda_{(i_1,l_1),(i_2,l_2)}^{[a]})&=\lambda_{(i_1-1,l_1),(i_2-1,l_2)}^{[a]},
      \end{align*}
  	  where $i-1=p-1$ when $i=0$. In other words, $\sigma_{\mathfrak{P}_n}$ shifts the 3D-multipartitions forward by one column. We may now prove:
  	  \begin{lem}
  	  	$\sigma_{\mathfrak{P}_n}$ is a poset automorphism of $(\mathfrak{P}_n,\trianglelefteq')$.
  	  \end{lem}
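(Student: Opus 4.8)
The plan is to deduce the statement from a single structural fact about the index set, namely that the cyclic column shift $\sigma_0\colon (i,l)\mapsto (i-1,l)$ (with $i-1$ read modulo $p$) is an order-automorphism of the poset $(\mathcal{K},\leq')$, and then to lift this fact through the three cases of Definition \ref{2PO}. First I would dispose of bijectivity: $\sigma_{\mathfrak{P}_n}$ is induced by the cyclic permutation of the $p$ columns of the floor, which carries fillings with at most two non-empty boxes to fillings of the same type and has inverse induced by $i\mapsto i+1$. Hence only the equivalence $\lambda\trianglelefteq'\mu \iff \sigma_{\mathfrak{P}_n}(\lambda)\trianglelefteq'\sigma_{\mathfrak{P}_n}(\mu)$ needs to be proved.

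The heart of the argument is the claim that $\sigma_0$ preserves $\leq'$. This is immediate from the definition: two indices are $\leq'$-comparable only when their levels differ, in which case the relation is decided purely by the levels, or when they occupy the very same box, in which case they are equal. The cyclic shift fixes every level coordinate $l$ and preserves both the relation $i=i'$ and its negation, so $(i_1,l_1)\leq'(i_2,l_2)$ holds if and only if $\sigma_0(i_1,l_1)\leq'\sigma_0(i_2,l_2)$. It is precisely here that $\leq'$ must replace the total order $\leq$: within a fixed level $\leq$ compares columns in their natural order, which the wrap-around $0\mapsto p-1$ destroys; this is exactly the obstruction recorded in the counterexample before the lemma, and passing to $\leq'$ removes it.

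With this in hand I would run through the three cases of Definition \ref{2PO}. Because $\sigma_{\mathfrak{P}_n}$ fixes every level coordinate and every superscript, the index hypotheses $(i_1,l_1)\leq'(i_3,l_3)$ and $(i_2,l_2)\leq'(i_4,l_4)$, together with the auxiliary comparison $(i_2,l_2)\leq'(i_3,l_3)$ occurring in condition (iii), are each preserved by the first claim, while the numerical conditions $|a|<|b|$, $|a|=|b|$ and $a\geq b$ involve only the superscripts and are untouched. Thus, whenever the shift leaves the canonical form of each multipartition intact, order-preservation is automatic: this covers the single-component comparisons of cases (1) and (2), and every case in which a two-component multipartition has its two boxes on distinct levels $l_1<l_2$, since then the shift keeps $(i_1-1,l_1)<(i_2-1,l_2)$ in $\leq$ and fixes the superscript.

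I expect the one genuinely delicate point — the main obstacle — to be a two-component multipartition both of whose boxes lie in a single level $l_1=l_2$. There the shift may carry column $0$ to column $p-1$ and so reverse the $\leq$-order of the two boxes, so that rewriting $\sigma_{\mathfrak{P}_n}(\lambda)$ in the canonical form replaces the superscript $a$ by $-a$. I would therefore isolate this same-level (irreducible) case and check directly that conditions (i)--(iii) remain consistent under this re-canonicalisation, comparing the relation on $\lambda,\mu$ with that on their shifts and reconciling the sign change of the superscripts with the finer order's behaviour on irreducible multipartitions. Once this case is settled, the equivalence $\lambda\trianglelefteq'\mu\iff\sigma_{\mathfrak{P}_n}(\lambda)\trianglelefteq'\sigma_{\mathfrak{P}_n}(\mu)$ follows, and together with bijectivity this shows $\sigma_{\mathfrak{P}_n}$ is a poset automorphism of $(\mathfrak{P}_n,\trianglelefteq')$.
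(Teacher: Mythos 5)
Your overall strategy coincides with the paper's: dispose of bijectivity, prove that the column shift preserves $\leq'$ on $\mathcal{K}$ (immediate, since strict $<'$-comparability is decided by the level coordinates alone, which the shift fixes), and then pass to $\trianglelefteq'$ through Definition \ref{2PO}. Indeed the paper's entire proof consists of exactly those two observations. But your write-up is not a complete proof: the case you yourself single out as ``the one genuinely delicate point'' --- a two-component multipartition with both boxes on one level, where the wrap-around $0\mapsto p-1$ forces a re-canonicalisation that swaps the two subscripts and negates the superscript --- is only announced (``I would \dots check directly''), never carried out. That deferred verification is the entire content of the lemma beyond the trivial index-level fact, so this is a genuine gap.

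Worse, the deferred check does not go through if Definition \ref{2PO} is read as stated, i.e.\ with every multipartition written in canonical form (first subscript smaller in $(\mathcal{K},\leq)$). Take $p\geq 2$, $n=4$, a level $l$, and set $\lambda=\lambda_{(0,l),(1,l)}^{[2]}$, $\mu=\mu_{(0,l),(1,l)}^{[-2]}$. The index conditions are equalities, and condition (ii) gives $\lambda\triangleleft'\mu$ (the reverse relation fails, since (iii) would require $(1,l)\leq'(0,l)$). After applying $\sigma_{\mathfrak{P}_n}$ the boxes become $(p-1,l)$ and $(0,l)$, so in canonical form $\sigma_{\mathfrak{P}_n}(\lambda)=\lambda_{(0,l),(p-1,l)}^{[-2]}$ and $\sigma_{\mathfrak{P}_n}(\mu)=\mu_{(0,l),(p-1,l)}^{[2]}$; now (i) and (ii) fail, (iii) would require $(p-1,l)\leq'(0,l)$, and in fact the relation reverses: $\sigma_{\mathfrak{P}_n}(\mu)\triangleleft'\sigma_{\mathfrak{P}_n}(\lambda)$. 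The same flip also defeats your claim that case (2) is automatic: $\lambda_{(0,l),(1,l)}^{[2]}\triangleleft'\mu_{(0,l)}$ holds, yet after shifting the required comparison $(0,l)\leq'(p-1,l)$ fails. So the missing step cannot be filled by the routine case check you propose; one must either let the shift act on the symbols $\lambda_{(i_1,l_1),(i_2,l_2)}^{[a]}$ without re-canonicalising (but then the relation being tested depends on the chosen representation, and antisymmetry collapses), or weaken Definition \ref{2PO} so that distinct multipartitions are $\trianglelefteq'$-comparable only when all their non-empty components lie in a single common column --- these are the only strict relations that Lemmas \ref{weaklm} and \ref{garlm}, and hence Theorem \ref{8.2.5}, actually use, and they are visibly stable under the shift. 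In your defence, the paper's own two-line proof steps silently over exactly the same point: you located the real difficulty, but a proof must resolve it rather than record it.
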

    \begin{proof}
    	The bijectivity is obvious. We prove that $\sigma_{\mathfrak{P}_n}$ preserves the partial order $\trianglelefteq'$. For any $(i_1,l_1)<'(i_2,l_2)\in \mathcal{K}$, we have  $l_1<l_2$ which implies that $(i_1-1,l_1)<'(i_2-1,l_2)$. Therefore, $(i_1,l_1)\leq'(i_2,l_2)$ implies $(i_1-1,l_1)\leq'(i_2-1,l_2)$. By Definition $\ref{2PO}$, $\lambda \mla{\mla{\triangleleft'}}\mu\in \mathfrak{P}_n$ implies $\sigma_{\mathfrak{P}_n}(\lambda )\mla{\mla{\triangleleft'}}\sigma_{\mathfrak{P}_n}(\mu)$. Therefore, $\sigma_{\mathfrak{P}_n}$ is a poset automorphism of $(\mathfrak{P}_n,\trianglelefteq')$. 
    \end{proof}
     For $t\in Std(\lambda)$ where $\lambda\in\mathfrak{P}_n$, define $\sigma_{\mathcal{T}}(t)$ as the standard tableau with shape $\sigma_{\mathfrak{P}_n}(\lambda)$ and all the numbers in the same relative positions as those in $t$.
     \begin{thm}\label{thshift}
     	Let $\sigma_{TL}$ be the automorphism of $TL_{r,1,n}(q,\zeta)$ defined by $(\ref{eq817})$, $\sigma_{\mathfrak{P}_n}$ and $\sigma_{\mathcal{T}}$ be the corresponding automorphisms described above. Then $\sigma=(\sigma_{TL}, \sigma_{\mathfrak{P}_n},\sigma_{\mathcal{T}})$ is a shift automorphism ($cf.$ Definition $\ref{shift}$) with respect to the cell datum $(\mathfrak{P}_n, Std, C,deg)$ of $TL_{r,1,n}(q,\zeta)$ where $\mathfrak{P}_n$ is the poset with $\trianglelefteq'$ (cf. Theorem \ref{8.2.5}).
     \end{thm}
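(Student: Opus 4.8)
The plan is to verify the three defining conditions (a), (b) and (c) of a shift automorphism (Definition \ref{shift}) for the triple $\sigma=(\sigma_{TL},\sigma_{\mathfrak{P}_n},\sigma_{\mathcal{T}})$. The first entry $\sigma_{TL}$ is already an algebra automorphism of $TL_{r,1,n}(q,\zeta)$ by $(\ref{eq817})$, and $\sigma_{\mathfrak{P}_n}$ has just been shown to be a poset automorphism of $(\mathfrak{P}_n,\trianglelefteq')$, so only the compatibility conditions tying $\sigma_{TL}$, $\sigma_{\mathfrak{P}_n}$ and $\sigma_{\mathcal{T}}$ together remain to be checked.

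For condition (a) I would first note that $\sigma_{\mathcal{T}}(t)$ is standard of shape $\sigma_{\mathfrak{P}_n}(\lambda)$: the map $\sigma_{\mathfrak{P}_n}$ only relabels the floor-column index $i\mapsto i-1$ and leaves every partition-column intact, so the property of increasing down each column is preserved. For the equality $deg(\sigma_{\mathcal{T}}(t))=deg(t)$, the key observation is that the box-shift induces on residues exactly the quiver automorphism $\sigma_0\colon(i,j)\mapsto(i-1,j)$, since $Res^{\Lambda}(a,1,(i,l))=(i,1-a+j_l)$. As $\sigma_0$ preserves all adjacencies in $\Gamma_{e,p}$ and the degree of a standard tableau is a sum of local contributions depending only on residues of addable and removable nodes, each contribution is unchanged and the degrees agree.

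Condition (b) is the heart of the matter. Using that $\sigma_{TL}$ fixes every $\psi_i$, and hence $\psi_{d(s)}$, $\psi_{d(t)}$ and $\psi_{d(s)}^*$, the computation collapses to
\begin{equation*}
\sigma_{TL}(C^{\lambda}_{s,t})=\psi_{d(s)}^*\,\sigma_{TL}(e_\lambda)\,\psi_{d(t)},\qquad \sigma_{TL}(e_\lambda)=e\big(\sigma_1\,res^{\Lambda}(t^{\lambda})\big),
\end{equation*}
so everything reduces to comparing the residue sequences $\sigma_1\,res^{\Lambda}(t^{\lambda})$ and $res^{\Lambda}(t^{\sigma_{\mathfrak{P}_n}(\lambda)})$. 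I would show these agree, i.e. $\sigma_{TL}(e_\lambda)=e_{\sigma_{\mathfrak{P}_n}(\lambda)}$, whenever the cyclic shift $i\mapsto i-1$ preserves the total order on the nodes of $\lambda$; this holds automatically unless two nodes share both the same row $a$ and the same floor-row $l$ and are separated only by the tie-breaking rule (iii) on the floor-column index $i$. In that generic situation one reads off $d(\sigma_{\mathcal{T}}(s))=d(s)$ and $d(\sigma_{\mathcal{T}}(t))=d(t)$, and the identity in condition (b) follows immediately.

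The main obstacle is precisely the exceptional case: for an irreducible $\lambda_{(i_1,l),(i_2,l)}^{[a]}$ whose two boxes lie in the same floor-row, the cyclic shift can carry $i_1=0$ to $p-1$ and thereby reverse the $i$-order of the two boxes, so that $\sigma_{\mathcal{T}}(t^{\lambda})\neq t^{\sigma_{\mathfrak{P}_n}(\lambda)}$ and the two residue sequences differ by a permutation $w$. I plan to resolve this by observing that the two reordered residues have distinct first coordinates, hence lie in different layers of $\Gamma_{e,p}$ and are non-adjacent; consequently $\psi_w^2 e=e$ on the relevant idempotent and $e(\sigma_1\,res^{\Lambda}(t^{\lambda}))\,\psi_w=\psi_w\,e_{\sigma_{\mathfrak{P}_n}(\lambda)}$, so the discrepancy between the idempotents is exactly cancelled by the compensating reordering $\sigma_{\mathcal{T}}(t^{\lambda})=t^{\sigma_{\mathfrak{P}_n}(\lambda)}\circ w$ already built into $\sigma_{\mathcal{T}}$, which restores (b). Finally, condition (c) is formal: $\sigma_{\mathcal{T}}^k$ acts on every tableau of shape $\lambda$ by the same underlying node bijection $\phi_k$ of $[\lambda]$ (the $k$-fold floor-column shift), and since a standard tableau is a bijection $\{1,\dots,n\}\to[\lambda]$, one has $\sigma_{\mathcal{T}}^k(t)=t$ if and only if $\phi_k=\mathrm{id}_{[\lambda]}$, a condition depending only on $\lambda$ and not on the filling; hence $\sigma_{\mathcal{T}}^k(t)=t$ if and only if $\sigma_{\mathcal{T}}^k(s)=s$ for all $s,t\in Std(\lambda)$, completing the verification that $\sigma$ is a shift automorphism.
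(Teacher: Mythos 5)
Your proposal is correct, and it is substantially more complete than the paper's own proof, which disposes of conditions (a) and (b) with the single sentence that they ``can be checked directly'' and only makes explicit the observation you also use for (c), namely that $\sigma_{\mathcal{T}}^k(t)=t$ if and only if $p\mid k$ (which holds for every nonempty shape, independently of the filling). The genuine content you add is in condition (b): you correctly isolate the only non-trivial point, namely that $\sigma_{\mathcal{T}}(t^{\lambda})\neq t^{\sigma_{\mathfrak{P}_n}(\lambda)}$ exactly when $\lambda$ has two boxes in the same floor row with first index $i_1=0$, so that the cyclic shift reverses the tie-breaking order (iii) and the two residue sequences differ by a permutation $w$ (a product of disjoint adjacent transpositions, one per overlapping row). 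Your resolution is the right mechanism: the two swapped residues lie in different layers of $\Gamma_{e,p}$, hence are distinct and non-adjacent, so the relations $\psi_r e(\mathbf{i})=e(s_r\mathbf{i})\psi_r$ and $\psi_r^2 e(\mathbf{i})=e(\mathbf{i})$ hold without error terms and absorb the discrepancy; one can confirm this in the smallest case $p=2$, $n=2$, one node in each of the boxes $(0,0)$ and $(1,0)$, where $\sigma_{\mathfrak{P}_n}(\lambda)=\lambda$ but $\sigma_{\mathcal{T}}(t^{\lambda})=t^{\lambda}\circ s_1$. Two caveats. First, your sketch still needs care where $d(\sigma_{\mathcal{T}}(s))=w\,d(s)$ fails to be length-additive (e.g.\ $w=d(s)=s_1$, product the identity), so one cannot simply concatenate reduced expressions; the verification must be run through the KLR relations, and the same non-adjacency of the crossed residues guarantees there are no correction terms there either. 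Second, you verify (b) in the form $\sigma_{TL}(C_{s,t}^{\lambda})=C_{\sigma_{\mathcal{T}}(s),\sigma_{\mathcal{T}}(t)}^{\sigma_{\mathfrak{P}_n}(\lambda)}$, without the transposition of the two tableau indices that appears in the paper's rendering of Definition \ref{shift}(b); that transposition is a typo (it is incompatible with $\sigma_A$ being an algebra automorphism rather than an anti-automorphism), and the unswapped identity is indeed the one that holds, as your computation and the example above confirm.
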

     \begin{proof}
     	By the argument above, $\sigma_{TL}$, $\sigma_{\mathfrak{P}_n}$ and $\sigma_{\mathcal{T}}$ are well defined.
 
 We now check the conditions (a), (b) and (c) of Definition \ref{shift} explicitly.
According to the definition, $\sigma_{\mathcal{T}}$ translates the numbers without changing the relative positions in the tableau. More precisely, the tableaux are moved one space to the left on the floor by $\sigma_{\mathcal{T}}$(cf. Figure$\ref{3dir}$ and Figure$\ref{3dr}$). Therefore, for any $t\in Std(\lambda)$, $\sigma_{\mathcal{T}}(t)\in Std(\sigma_{\mathfrak{P}_n}(\lambda))$ and $deg(\sigma_{\mathcal{T}}(t))=deg(t)$, thus (a) holds.

For $s,t\in Std(\lambda)$, we have
\begin{equation*}
C_{s,t}^{\lambda}=\psi_{d(s)}^*e_{\lambda}\psi_{d(t)},    \end{equation*}
and
\begin{equation*}
    C_{\sigma_\mathcal{T}(t),\sigma_\mathcal{T}(s)}^{\sigma_{\mathfrak{P}_n}(\lambda)}=\psi_{d(\sigma_\mathcal{T}(s))}^*e_{\sigma_{\mathfrak{P}_n}(\lambda)}\psi_{d(\sigma_\mathcal{T}(t))}.
\end{equation*}
Since  $\sigma_{\mathcal{T}}$ doesn't change the relative positions in the tableau, we have $\psi_{d(s)}=\psi_{d(\sigma_\mathcal{T}(s))}$ and $\psi_{d(t)}=\psi_{d(\sigma_\mathcal{T}(t))}$.
Since $\sigma_{\mathfrak{P}_n}(\lambda)$ is obtained by moving $\lambda$ one space to the left on the floor, if $\mathbf{i}=((i_1,l_1),(i_2,l_2),\dots, (i_n,l_n))\in \mathcal{K}^n$ such that $e_{\lambda}=e(\mathbf{i})$, $e_{\sigma_{\mathfrak{P}_n}(\lambda)}=e((i_1-1,l_1),(i_2-1,l_2),\dots, (i_n-1,l_n))=\sigma_{TL}(e_{\lambda})$, where $(-1,l)=(p-1,l)$. Therefore, we have
\begin{equation*}
    C_{\sigma_\mathcal{T}(t),\sigma_\mathcal{T}(s)}^{\sigma_{\mathfrak{P}_n}(\lambda)}=\psi_{d(s)}^*\sigma_{TL}(e_{\lambda})\psi_{d(t)}=\sigma_{TL}(C_{s,t}^{\lambda}),
\end{equation*}
thus condition (b) holds.

        For any $t\in Std(\lambda)$, since the tableaux are moved one space to the left on the floor by $\sigma_{\mathcal{T}}$, $\sigma_{\mathcal{T}}^k(t)=t$ if and only if $p|k$. So (c) holds for all $\lambda\in\mathfrak{P}_n$.
     \end{proof}
     We are now in a position to identify the skew cell datum of the point-wise fixed subalgebra $TL_{r,p,n}$ under the shift automorphism $\sigma$. Let $\equiv$ be the equivalence relation on $\mathfrak{P}_n$ such that
     \begin{equation}\label{equmpar}
           \begin{aligned}
     	\lambda_{(i,l)}&\equiv \lambda_{(i+k,l)}\\
     	\mu_{(i_1,l_1),(i_2,l_2)}^{[a]}&\equiv\mu_{(i_1+k,l_1),(i_2+k,l_2)}^{[a]},
     \end{aligned}
     \end{equation}
   
     for all $k\in I$ and $\lambda_{(i,l)},\mu_{(i_1,l_1),(i_2,l_2)}^{[a]}\in \mathfrak{P}_n$. Notice that $\lambda\equiv\sigma_{\mathfrak{P}_n}(\lambda)$ for all $\lambda \in \mathfrak{P}_n$. Denote by $[\lambda]$ the equivalence class in $\mathfrak{P}_n$ containing $\lambda$. Then $[\lambda]$ is the orbit of $\lambda$ under $\sigma_{\mathfrak{P}_n}$. Define the set of orbits
     \begin{equation}\label{poset831}
     	\mathfrak{P}_{n,p}:=\{[\lambda]|\lambda\in\mathfrak{P}_n \}.
     \end{equation}
     For $[\lambda],[\mu] \in\mathfrak{P}_{n,p}$, write $[\lambda]\trianglelefteq_p[\mu]$ if and only if there exist $\lambda_1\equiv\lambda$ and $\mu_1\equiv\mu$ such that $\lambda_1\trianglelefteq'\mu_1\in\mathfrak{P}_n$. 
     Recall that $[\lambda]\trianglelefteq_p[\mu]$ does not imply $\lambda\trianglelefteq'\mu$, as they may not be comparable with respect to $\trianglelefteq'$. We next provide a counterexample.
     \begin{exmp}
     Let $r=12$, $p=3$ and $n=5$.{The floor has size $3\times 4$.} Then $\lambda_{(1,1),(2,3)}^{[1]}$ and $\mu_{(0,1)}$ are two 3D multipartitions in $\mathfrak{P}_5$. We have
     \begin{equation*}
     	[\lambda_{(1,1),(2,3)}^{[1]}]\trianglelefteq_p[\mu_{(0,1)}]
     \end{equation*}
     because $\lambda_{(1,1),(2,3)}^{[1]}\equiv\lambda_{(0,1),(1,3)}^{[1]}\trianglelefteq'\mu_{(0,1)}$ according to $(2)$ in Definition $\ref{2PO}$. But $\lambda_{(1,1),(2,3)}^{[1]}$ and $\mu_{(0,1)}$ are incomparable with respect to $\trianglelefteq'$.
     \end{exmp}

   {We now prove the crucial result that the involution $\iota_{\sigma}$ in the skew cell datum in Theorem $\ref{skc}$ is trivial by showing that the orbit size $o_T(\lambda)=1$. Since the two maps $\sigma_{\mathfrak{P}_n}$ and $\sigma_{\mathcal{T}}$ move the diagrams and tableaux one place to the left on the floor, for any $\lambda\in\mathfrak{P}_n$ and $t\in Std(\lambda)$, the size of the $\langle \sigma_{\mathfrak{P}_n}\rangle$-orbit containing $\lambda$ and the size of the $\langle\sigma_{\mathcal{T}}\rangle$-orbit containing $t$ are both $p$. Comparing the definition of $\sigma_{\lambda}$ in the general theory expounded before Theorem $\ref{skc}$, we have $\sigma_{\lambda}=\sigma_{\mathcal{T}}^p=id$.
     As a direct consequence, the size of orbits of $\sigma_{\lambda}$ in $T(\lambda)$, $o_T(\lambda)=1$.
     The poset involution $\iota_p$ in Theorem $\ref{skc}$ is defined by $\iota_p((\lambda,k))=(\lambda,-k)$ with $\lambda \in \mathfrak{P}$ and $k\in \mathbb{Z}/o_T(\lambda)\mathbb{Z}$. Since $o_T(\lambda)=1$, $\iota_p=id$ in the skew cell datum of the point-wise fixed subalgebra $TL_{r,p,n}$
     This implies the skew cellular structure we construct is actually cellular.}
     
     In analogy to the equivalence relation among the multipartitions, we define an equivalence relation $\equiv$ on the set of tableaux $\mathcal{T}=\bigsqcup_{\lambda\in \mathfrak{P}}T(\lambda)$ as follows:
     
     For $s,t\in \mathcal{T}$, we say $s\equiv t$ if and only if $shape(s)\equiv shape(t)$ and the numbers in $s$ are in the same relative positions as those in $t$. Denote by $[t]$ the equivalence class containing $t$ in $\mathcal{T}$. For $s\equiv t\in \mathcal{T}$, as the numbers in $s$ are in the same relative positions as those in $t$, we have $d(s)=d(t)$ and $deg(s)=deg(t)$, where $d(s)$ is the element in $\mathfrak{S}_n$ transforming $s$ to the unique standard tableau $t^{\lambda}$. Therefore, the element $d([t])$ can be defined as $d(s)$ and $deg_p([t])$ can be defined as $deg(s)$ for any $s\equiv t$.
     For $[\lambda]\in\mathfrak{P}_{n,p}$, define:
     \begin{equation}\label{t_p}
     	T_p([\lambda]):=\{[t]|t\in Std(\lambda) \}.
     \end{equation}
     Finally, for $[s],[t]\in T_p([\lambda])$, define
     \begin{equation} \label{cbrpn}
     	C_p^{[\lambda]}([s],[t])=\psi_{d([s])}^*(\sum_{\mu\equiv\lambda}e_{\mu})\psi_{d([t])}.
     \end{equation}
     
     The following Theorem is a direct consequence of Theorem $\ref{skc}$:
     \begin{thm}\label{csrpn}
        
     	Let $TL_{r,p,n}$ be the Temperley-Lieb algebra of type $G(r,p,n)$ defined in Definition \ref{dfrpn}, $\mathfrak{P}_{n,p}$ be the poset of equivalence classes of multipartitions defined in (\ref{poset831} and $T_p,C_p,deg_p$ be as described above. Then $TL_{r,p,n}$ is a skew cellular algebra with skew cell datum $(\mathfrak{P}_{n,p}, id,T_p,C_p,deg_p)$. Moreover, since the poset involution is trivial, $TL_{r,p,n}$ is a cellular algebra with cell datum $(\mathfrak{P}_{n,p},T_p,C_p,deg_p)$.
     \end{thm}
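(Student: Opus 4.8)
The plan is to obtain Theorem \ref{csrpn} as a direct application of Theorem \ref{skc} (the Hu--Mathas--Rostam machinery) to the graded cellular algebra $A = TL_{r,1,n}(q,\zeta)$, whose cell datum $(\mathfrak{P}_n, Std, C, deg)$ with respect to $\trianglelefteq'$ is furnished by Theorem \ref{8.2.5}, together with the shift automorphism $\sigma = (\sigma_{TL}, \sigma_{\mathfrak{P}_n}, \sigma_{\mathcal{T}})$ established in Theorem \ref{thshift}. The only hypotheses of Theorem \ref{skc} still to check are that $R$ contains a primitive $p$-th root of unity and that $p \in R^*$; both hold since $R$ is a field of characteristic $0$ containing the element $\zeta$ of order $p$. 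As the order of $\sigma_{TL}$ is exactly $p$, Theorem \ref{skc} then yields that the fixed-point subalgebra $A^{\sigma_{TL}}$ is a graded skew cellular algebra with skew cell datum $(\mathfrak{P}_{\sigma,p}, \iota_\sigma, T_\sigma, C_\sigma, deg_\sigma)$. By Definition \ref{dfrpn} we have $A^{\sigma_{TL}} = TL_{r,p,n}$, so the whole task reduces to matching this abstract datum with the concrete one $(\mathfrak{P}_{n,p}, id, T_p, C_p, deg_p)$.

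For the identification I would first record the orbit data. As noted just before the statement, every $\langle \sigma_{\mathfrak{P}_n}\rangle$-orbit on $\mathfrak{P}_n$ and every $\langle \sigma_{\mathcal{T}}\rangle$-orbit on a fixed $Std(\lambda)$ has size $p$; hence $o_\lambda = p$ and $\sigma_\lambda = \sigma_{\mathcal{T}}^{o_\lambda} = \sigma_{\mathcal{T}}^p = \mathrm{id}$, so every $\sigma_\lambda$-orbit on $Std(\lambda)$ is a singleton and $o_T(\lambda) = 1$. Consequently the abstract poset $\mathfrak{P}_{\sigma,p} = \{(\lambda, k) : \lambda \in \mathfrak{P}_\sigma,\, k \in \mathbb{Z}/o_T(\lambda)\mathbb{Z}\}$ collapses to $\{(\lambda, 0) : \lambda \in \mathfrak{P}_\sigma\}$, which is naturally identified with the orbit set $\mathfrak{P}_{n,p}$ of (\ref{poset831}), and under this identification $\trianglelefteq_\sigma$ becomes $\trianglelefteq_p$. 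Likewise $T_\sigma(\lambda) = Std(\lambda)$, which the bijection $t \mapsto [t]$ matches with $T_p([\lambda])$ of (\ref{t_p}), while $deg_\sigma = deg_p$ by construction.

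The substantive step is to verify $C_\sigma = C_p$. With $o_T(\lambda) = 1$ and $\sigma_\lambda = \mathrm{id}$, the defining formula reduces to $C_\sigma(s,t) = \sum_{l=0}^{p-1} \sigma_{TL}^l(C_{s,t}^\lambda)$. Here I would compute $\sigma_{TL}^l(C_{s,t}^\lambda)$ directly from the action (\ref{eq817}): since $\sigma_{TL}^l$ fixes every $\psi_i$ and $y_i$, and sends $e_\lambda = e(res^\Lambda(t^\lambda))$ to $e_{\sigma_{\mathfrak{P}_n}^l(\lambda)}$ (the residue sequence being shifted by $-l$ in the first coordinate, which is exactly the residue sequence of the canonical tableau of $\sigma_{\mathfrak{P}_n}^l(\lambda)$), one obtains $\sigma_{TL}^l(C_{s,t}^\lambda) = \psi_{d(s)}^* e_{\sigma_{\mathfrak{P}_n}^l(\lambda)} \psi_{d(t)}$. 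Summing over $l$ lets $\sigma_{\mathfrak{P}_n}^l(\lambda)$ range over the entire orbit $\{\mu : \mu \equiv \lambda\}$, and using $d(\sigma_{\mathcal{T}}^l(s)) = d(s)$ this gives $C_\sigma(s,t) = \psi_{d([s])}^*\bigl(\sum_{\mu \equiv \lambda} e_\mu\bigr)\psi_{d([t])}$, which is precisely $C_p^{[\lambda]}([s],[t])$ of (\ref{cbrpn}).

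Finally, because $o_T(\lambda) = 1$ forces $k = 0$ for every index, the poset involution $\iota_\sigma(\lambda, k) = (\lambda, -k)$ is the identity and each map $(\iota_\sigma)_{\lambda,k}$ is the identity on $T_\sigma(\lambda)$; hence the skew cell datum is an ordinary cell datum, and $TL_{r,p,n}$ is cellular with datum $(\mathfrak{P}_{n,p}, T_p, C_p, deg_p)$. The one place needing genuine care is the computation of $C_\sigma$: one must confirm that the $\sigma_{TL}$-orbit sum of a single $C_{s,t}^\lambda$ reassembles into exactly one element $C_p^{[\lambda]}([s],[t])$, which rests on the two facts that $\sigma_{TL}$ permutes the idempotents $e_\mu$ along the $\equiv$-class of $\lambda$ and leaves $d(s)$ and $d(t)$ unchanged. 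Everything else is a formal consequence of $o_\lambda = p$ and $o_T(\lambda) = 1$.
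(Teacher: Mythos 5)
Your proposal is correct and follows essentially the same route as the paper: the paper likewise obtains Theorem \ref{csrpn} as a direct consequence of Theorem \ref{skc}, applied to the cell datum of Theorem \ref{8.2.5} and the shift automorphism of Theorem \ref{thshift}, with the key observation (stated just before the theorem) that every $\langle\sigma_{\mathfrak{P}_n}\rangle$-orbit and every $\langle\sigma_{\mathcal{T}}\rangle$-orbit has size $p$, so that $o_T(\lambda)=1$, $\sigma_\lambda=\mathrm{id}$ and the poset involution $\iota_\sigma$ is trivial. Your explicit matching of $(\mathfrak{P}_{\sigma,p},\iota_\sigma,T_\sigma,C_\sigma,deg_\sigma)$ with $(\mathfrak{P}_{n,p},id,T_p,C_p,deg_p)$ --- in particular the orbit-sum computation showing $C_\sigma=C_p$ --- simply spells out identifications the paper makes implicitly when it defines $[\lambda]$, $T_p([\lambda])$, $d([s])$ and $C_p$ in the paragraphs preceding the theorem.
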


\section{The representations of $TL_{r,p,n}$}
In this section, we study the representations of $TL_{r,p,n}$ from the cellular point of view. We first calculate the dimensions of the cell modules and define reducible and irreducible elements in the poset $\mathfrak{P}_{n,p}$. These names come from the fact that the cell modules corresponding to the irreducible elements are simple. Then we show that the cell modules of $TL_{r,p,n}$ corresponding to reducible elements in $\mathfrak{P}_{n,p}$ can be regarded as cell modules of some $TL_{d,1,n}$ where $d=\frac{r}{p}$ with special parameters. Finally, we calculate the decomposition numbers of the reducible cell modules.
\subsection{The cell modules of $TL_{r,p,n}$ and irreducible cells}
In this subsection, we give a description of the cell modules of $TL_{r,p,n}$. Let $[\lambda]\in\mathfrak{P}_{n,p}$ be an equivalence class of multipartitions in $\mathfrak{P}_n$ with respect to the equivalence relation $\equiv$ defined in (\ref{equmpar}). The (left) cell module of $TL_{r,p,n}$ corresponding to $[\lambda]$, denoted $W([\lambda])$, is the free module over $R$ with basis $\{C_{[s]}|[s]\in T([\lambda]) \}$. The $TL_{r,p,n}$-action is defined by
	\begin{equation}
		aC_{[s]}=\sum_{[s']\in T([\lambda])}r_a([s'],[s])C_{[s']}
	\end{equation}
    for all $a\in TL_{r,p,n}$ and $[s]\in T([\lambda])$, where $r_a([s'],[s])$ is the coefficient uniquely defined by 
        \begin{equation}
        	aC_{[s],[t]}^{[\lambda]}=\sum_{[s']\in T([\lambda])}r_a([s'],[s])C_{[s'],[t]}^{[\lambda]}+\sum_{[\mu] \triangleleft_p [\lambda],[u],[v]\in T([\mu])}dC_{[u],[v]}^{[\mu]}.
        \end{equation}
The right cell modules of $TL_{r,p,n}$ are defined similarly. 
We next calculate the dimensions of the cell modules. For $[\lambda]\in \mathfrak{P}_{n,p}$, the dimension of the cell module $W([\lambda])$ equals to the number of standard tableaux of shape $\lambda$. Therefore, 
for $\lambda_{(i,l)},\mu_{(i_1,l_1),(i_2,l_2)}^{[a]} \in \mathfrak{P}_n$, we have
\begin{equation}\label{eq:dimW}
\begin{aligned}
    dim(W([\lambda_{(i,l)}]))=&1\\
    dim(W([\mu_{(i_1,l_1),(i_2,l_2)}^{[a]}]))=&\binom{n}{\frac{n-a}{2}}.
\end{aligned}
\end{equation}

Further, denote by $TL_{\mla{\mla{\triangleleft}}_p[\lambda]}$ and $TL_{\trianglelefteq_p[\lambda]}$ the subspaces spanned by $\{C_{[s],[t]}^{[\mu]}|[\mu]\mla{\mla{\triangleleft}}_p[\lambda]\}$ and $\{C_{[s],[t]}^{[\mu]}|[\mu]\trianglelefteq_p[\lambda]\}$ respectively. These are $TL_{r,p,n}$-bimodules by definition of a cell datum. Define the $TL_{r,p,n}$-bimodule corresponding to $[\lambda]$ as
\begin{equation}\label{eq:filtrn}
    TL([\lambda]):=TL_{\trianglelefteq_p[\lambda]}/TL_{\mla{\mla\triangleleft}_p[\lambda]}.
\end{equation}

Now the argument of \cite[Lemma 2.2]{Lehrer1996} shows that for each $\lambda$, $TL([\lambda])\cong W(\lambda)\otimes W(\lambda)^*$. Thus, from \eqref{eq:dimW},\eqref{eq:filtrn}  we have
\begin{equation}
	dim(TL([\lambda]))=
	\begin{cases}
	    1 & \textit{ if } \lambda=\lambda_{(i,l)};\\
	    \binom{n}{\frac{n-a}{2}}^2 & \textit{ if } \lambda= \lambda_{(i_1,l_1),(i_2,l_2)}^{[a]}.
	\end{cases}
\end{equation}
This permits the calculation of the dimension of $TL_{r,p,n}$:
\begin{align*}
	dim(TL_{r,p,n})&=\sum_{[\lambda]\in\mathfrak{P}_{n,p}} dim(TL([\lambda ]))\\
	             &=\sum_{[\lambda]\in\mathfrak{P}_{n,p}(1)} dim(TL([\lambda ]))+\sum_{[\lambda]\in\mathfrak{P}_{n,p}(2)} dim(TL([\lambda ]))\\
	             &=d+\frac{1}{p}\binom{r}{2}\left(\sum_{a_1=1}^{n-1}\binom{n}{a_1}^2\right)\\
	             &=d+\frac{1}{p}\binom{r}{2}\left(\binom{2n}{n}-2\right)\\
	             &=\frac{1}{p}\left(\binom{r}{2}\binom{2n}{n}-r^2+2r\right)
\end{align*}
where $\mathfrak{P}_{n,p}(i)$ consists of the equivalence classes of 3D multipartitions with exactly $i$ non-empty components.
 By Definition \ref{irrr}, the reducibility of $\lambda$ is equivalent to that of any $\lambda'\in [\lambda]$. So we can say $[\lambda]\in\mathfrak{P}_{n,k}$ is reducible if $\lambda\in\mathfrak{P}_n$ is.
 
{Partition} the poset $\mathfrak{P}_{n,p}$ into two parts, {\it viz.} those subsets $\mathfrak{P}_{n,p}^0$ consisting of the reducible orbits and the $\mathfrak{P}_{n,p}^1$ consisting of the irreducible orbits. We will show that the cell modules corresponding to these two parts have different properties. We begin by showing that the bimodules corresponding to the irreducible orbits are irreducible two-sided ideals of $TL_{r,p,n}$.

\begin{lem} \label{impr}
Let $[\lambda]\in \mathfrak{P}_{n,p}$ be irreducible. For any $[\mu]\neq [\lambda]$, $[s],[t]\in T([\mu])$ and $[u],[v]\in T([\lambda])$, we have
\begin{equation*}
    	C_p^{[\mu]}([s],[t])C_p^{[\lambda]}([u],[v])=0.
\end{equation*}
\end{lem}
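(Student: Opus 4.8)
The plan is to expand the product via the definition (\ref{cbrpn}) of the basis elements and to reduce the claimed vanishing to a residue-matching obstruction coming from the underlying KLR idempotents. Writing $M := \psi_{d([t])}\psi_{d([u])}^*$, the product becomes
\[
C_p^{[\mu]}([s],[t])\,C_p^{[\lambda]}([u],[v]) = \psi_{d([s])}^*\Big(\sum_{\nu\equiv\mu} e_\nu\Big)\, M \,\Big(\sum_{\rho\equiv\lambda} e_\rho\Big)\,\psi_{d([v])},
\]
so it suffices to prove that the inner factor $\big(\sum_{\nu\equiv\mu} e_\nu\big) M \big(\sum_{\rho\equiv\lambda} e_\rho\big)$ vanishes, and by distributivity it is enough to show $e_\nu M e_\rho = 0$ for every pair $\nu\equiv\mu$, $\rho\equiv\lambda$, where $e_\nu = e(res^\Lambda(t^\nu))$ and $e_\rho = e(res^\Lambda(t^\rho))$.

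First I would invoke the standard KLR fact that $e(\mathbf{i})\,\mathcal{R}_n^\Lambda(\Gamma_{e,p})\,e(\mathbf{j}) = 0$ whenever $\mathbf{i}$ and $\mathbf{j}$ do not lie in the same $\mathfrak{S}_n$-orbit, i.e. unless they carry the same multiset of entries in $K$. This is immediate from the relations $\psi_r e(\mathbf{i}) = e(s_r\mathbf{i})\psi_r$ and $y_r e(\mathbf{i}) = e(\mathbf{i})y_r$, since every element of the algebra moves residue sequences only by the symmetric group; moreover it passes to the quotient $TL_{r,1,n}(q,\zeta)=\mathcal{R}_n^\Lambda(\Gamma_{e,p})/\mathcal{J}$. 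Consequently $e_\nu M e_\rho \neq 0$ forces $res^\Lambda(t^\nu)$ and $res^\Lambda(t^\rho)$ to have the same multiset of residues.

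The heart of the argument, and the step I expect to be the main obstacle, is to show that this residue-matching is impossible once $[\lambda]$ is irreducible and $[\mu]\neq[\lambda]$. Because $\lambda$ is irreducible, its two non-empty columns lie in distinct layers $i_1\neq i_2$ of $\Gamma_{e,p}$, so for every shift $\rho\equiv\lambda$ the residues of $t^\rho$ occupy exactly two distinct first-coordinate layers (each non-empty column contributes at least one residue in its layer). A reducible or single-component $\mu$ has all its nodes in a single layer, so $res^\Lambda(t^\nu)$ uses only one layer and can never match; hence $\mu$ must itself be irreducible. For irreducible $\mu$ I would then reconstruct the shape from the multiset exactly as in Lemmas \ref{weaklm} and \ref{garlm}: by the parameter restriction (\ref{uures}) the top node of each column has a residue that is distinct and uniquely determines the pair $(i,l)$ of that column. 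Matching the two occupied layers, the two column lengths (hence $a$), and the two charges $j_{l_1}, j_{l_2}$ then forces $\nu$ to be a shift of $\rho$, i.e. $[\mu]=[\lambda]$, contradicting $[\mu]\neq[\lambda]$.

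Therefore every term $e_\nu M e_\rho$ vanishes, the inner factor is zero, and so is the whole product, as claimed. The only delicate point is the final shape-reconstruction: one must verify that the data recoverable from a residue multiset — namely the set of occupied layers modulo the $\langle\sigma_{\mathfrak{P}_n}\rangle$-shift together with the two charges and the two lengths — form a complete set of invariants for the orbit $[\lambda]$ of an irreducible multipartition. This is precisely where the distinctness $i_1\neq i_2$ of the two layers is essential, since it is what prevents the residue set of $[\lambda]$ from collapsing onto that of a one-layer (reducible) orbit or of a differently-placed irreducible orbit.
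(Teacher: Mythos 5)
Your opening reduction is sound and is the same first move as the paper's proof, but the step you yourself flag as delicate is where the argument genuinely breaks. Because you keep $M=\psi_{d([t])}\psi_{d([u])}^*$ sandwiched between the two sums of idempotents, the only obstruction you can extract from the KLR relations is that non-vanishing of $e_\nu M e_\rho$ forces $res^{\Lambda}(t^\nu)$ and $res^{\Lambda}(t^\rho)$ to lie in the same $\mathfrak{S}_n$-orbit, i.e.\ to have equal residue \emph{multisets}. But the residue multiset is not a complete invariant of the orbit of an irreducible 3D multipartition when $e>0$: a column whose length is divisible by $e$ wraps all the way around the cyclic quiver, and its residue multiset is then independent of the charge $j_l$ of the floor box it occupies. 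Concretely, take $e=4$ (permitted, since $(1+q)(1+q+q^2)\neq 0$ only excludes $e\in\{2,3\}$), $p=3$, $d=2$, with the two charges $j_0\neq j_1$ chosen so that (\ref{uures}) holds, and $n=5$. Then $\lambda^{[3]}_{(0,0),(1,1)}$ and $\mu^{[3]}_{(0,1),(1,1)}$ are both irreducible and lie in distinct $\langle\sigma_{\mathfrak{P}_n}\rangle$-orbits (their floor positions differ, and $\sigma_{\mathfrak{P}_n}$ only shifts layers), yet their residue multisets coincide: the length-$4$ column contributes every residue $(0,j)$, $j\in\mathbb{Z}/4\mathbb{Z}$, whichever charge it starts from, and the remaining single node has residue $(1,j_1)$ in both. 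So the two charges cannot be recovered from the multiset, your reconstruction of the orbit fails, and your argument cannot conclude $e_\nu M e_\rho=0$ in such cases (it is zero, but not for multiset reasons).

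The paper avoids exactly this loss of information. Instead of stopping at the orbit-level obstruction, it pushes the idempotents through the $\psi$'s using the exact intertwining relation $e_{\mu'}\psi_{d([t])}=\psi_{d([t])}e(t')$, where $t'\equiv t$ is the standard tableau of shape $\mu'$; the two families of idempotents then sit adjacent to each other, and the product vanishes iff $e(t')e(u')=0$ for every pair, i.e.\ iff no standard tableaux $t'$, $u'$ of shapes in $[\mu]$, $[\lambda]$ have equal residue \emph{sequences}. Sequence equality retains precisely the standardness data that the multiset destroys: the first entry of the sequence lying in a given layer must label a first-row node, and by (\ref{uures}) first-row nodes are uniquely determined by their residues; this pins down both floor positions, forces the two shapes to be equal, and yields the contradiction with $[\mu]\neq[\lambda]$. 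Your proof becomes correct, and essentially identical to the paper's (compare also Lemmas \ref{weaklm} and \ref{garlm}), once you make this one change: intertwine the idempotents past $M$ and run the final comparison on residue sequences of standard tableaux rather than on residue multisets.
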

\begin{proof}
  By (\ref{cbrpn}), we have
  \begin{align*}
      &C_p^{[\mu]}([s],[t])C_p^{[\lambda]}([u],[v])\\
      =&\psi_{d([s])}^*(\sum_{\mu'\equiv\mu}e_{\mu'})\psi_{d([t])}\psi_{d([u])}^*(\sum_{\lambda'\equiv\lambda}e_{\lambda'})\psi_{d([v])}\\
      =&\psi_{d([s])}^*\psi_{d([t])}(\sum_{t'\equiv t}e(t'))(\sum_{u'\equiv u}e(u'))\psi_{d([u])}^*\psi_{d([v])}.
  \end{align*}
  It suffices to show
  \begin{equation*}
      (\sum_{t'\equiv t}e(t'))(\sum_{u'\equiv u}e(u'))=0.
  \end{equation*}
  As the distinct $e(\mathbf{i})$ are mutually orthogonal, we need only show that $e(t)\neq e(u)$ for all $u\in Std(\lambda)$ and $t\in Std(\mu)$ with $\mu\not\equiv \lambda$. Let $\lambda $ be of the form $\lambda_{(i_1,l_1),(i_2,l_2)}^{[a]}$ with $i_1\neq i_2$ and $e(u)=e(k_1,k_2,\dots,k_n)$. As $u$ is a standard tableau of shape $\lambda$, $\frac{n+a}{2}$ of the $k$'s are of the form $(i_1,x)$ and  $\frac{n-a}{2}$ $k$'s are of the form $(i_2,x)$. If $e(t)= e(u)$, there should be $\frac{n+a}{2}$ nodes in $\mu$ with residues of the form $(i_1,x)$ and $\frac{n-a}{2}$ nodes in $\mu$ with residues of the form $(i_2,x)$. As $\mu$ has at most 2 non-empty components, $\mu=\mu_{(i_1,l_3),(i_2,l_4)}^{[a]}$.
  
  We shall show that $l_3=l_1$ and $l_4=l_2$. Denote by $k_a$ the first $k$ of the form $(i_1,x)$ and $k_b$ the first of the form $(i_2,x)$. As $u$ is a standard tableau, the numbers $a$ and $b$ are in the first row. For the same reason, the numbers $a$ and $b$ are in the first row of $t$. According to the restriction on the parameters in (\ref{uures}), the nodes in the first row are uniquely determined by their residues. Therefore, $a$ and $b$ are in the same position in $u$ and $t$, which implies that $l_3=l_1$ and $l_4=l_2$. Thus $\mu=\lambda$, contradicting our assumption that $\mu\not\equiv \lambda$. So $e(t)\neq e(u)$, so that $e(t)e(u)=0$.
  
  So for any standard tableau $t'\equiv t$, we have $Shape(t')\equiv \mu\not\equiv \lambda$. Thus,
  \begin{equation*}
      \left(\sum_{t'\equiv t}e(t')\right)e(u')=0.
  \end{equation*}
  But for any standard tableau $u'\equiv u$, $Shape(u')\equiv \lambda\not\equiv \mu'$. We have
    \begin{equation*}
      \left(\sum_{t'\equiv t}e(t')\right)\left(\sum_{u'\equiv u}e(u')\right)=0.
  \end{equation*}
  Therefore, $C_p^{[\mu]}([s],[t])C_p^{[\lambda]}([u],[v])=0$.
\end{proof}

By applying the anti-involution of $TL_{r,p,n}$ which fixes the KLR generators, we further obtain:
\begin{lem} \label{832}
Let $[\lambda]\in \mathfrak{P}_{n,p}$ be irreducible. For any $[\mu]\neq [\lambda]$, $[s],[t]\in T([\mu])$ and $[u],[v]\in T([\lambda])$, we have
\begin{equation*}
    	C_p^{[\lambda]}([u],[v])C_p^{[\mu]}([s],[t])=0.
\end{equation*}
\end{lem}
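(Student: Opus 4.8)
The plan is to deduce this statement from Lemma \ref{impr} by transporting it through the anti-involution $*$ of $TL_{r,p,n}$ that fixes all the KLR generators. First I would record how $*$ acts on the basis elements $C_p^{[\lambda]}([s],[t])$. By the defining formula (\ref{cbrpn}) we have $C_p^{[\lambda]}([s],[t])=\psi_{d([s])}^*(\sum_{\mu\equiv\lambda}e_{\mu})\psi_{d([t])}$, and since $*$ is an anti-homomorphism fixing each KLR generator $e(\mathbf{i})$, $y_i$ and $\psi_i$, applying $*$ reverses the product and sends $\psi_{d([t])}\mapsto\psi_{d([t])}^*$, $\sum_{\mu}e_{\mu}\mapsto\sum_{\mu}e_{\mu}$ (as each $e_\mu$ is a KLR idempotent and hence $*$-fixed), and $\psi_{d([s])}^*\mapsto\psi_{d([s])}$. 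Hence $(C_p^{[\lambda]}([s],[t]))^*=C_p^{[\lambda]}([t],[s])$. Equivalently, this is precisely condition ($C_4$) for the skew cell datum $(\mathfrak{P}_{n,p},id,T_p,C_p,deg_p)$ of Theorem \ref{csrpn}, read off with the trivial involution $\iota=id$.

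With this in hand I would start from the identity of Lemma \ref{impr}, namely $C_p^{[\mu]}([s],[t])C_p^{[\lambda]}([u],[v])=0$, which is valid for every irreducible $[\lambda]$, every $[\mu]\neq[\lambda]$, and all choices of tableaux, and apply $*$ to both sides. Since $*$ is an anti-involution, $(XY)^*=Y^*X^*$, so the left-hand side becomes $C_p^{[\lambda]}([v],[u])\,C_p^{[\mu]}([t],[s])$, giving $C_p^{[\lambda]}([v],[u])\,C_p^{[\mu]}([t],[s])=0$. Because $[u],[v]\in T([\lambda])$ and $[s],[t]\in T([\mu])$ were arbitrary, I would then relabel $[u]\leftrightarrow[v]$ and $[s]\leftrightarrow[t]$ to obtain exactly $C_p^{[\lambda]}([u],[v])\,C_p^{[\mu]}([s],[t])=0$, which is the assertion of the lemma.

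I do not expect a serious obstacle here. The only point that requires care is the description of how $*$ acts on $C_p^{[\lambda]}([s],[t])$, and that is a one-line computation from (\ref{cbrpn}) together with the fact that $*$ fixes the KLR generators (in particular the idempotents $e_\mu$). Everything else is the purely formal observation that an anti-homomorphism turns a vanishing product $XY=0$ into the vanishing product $Y^*X^*=0$, followed by a harmless relabelling of the tableau indices.
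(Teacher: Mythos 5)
Your proposal is correct and is exactly the paper's argument: the paper derives Lemma \ref{832} from Lemma \ref{impr} in one line, ``by applying the anti-involution of $TL_{r,p,n}$ which fixes the KLR generators.'' Your write-up simply makes explicit the two points the paper leaves implicit, namely that $(C_p^{[\lambda]}([s],[t]))^*=C_p^{[\lambda]}([t],[s])$ (since $*$ fixes each $e_\mu$ and reverses products) and the subsequent relabelling of tableau indices.
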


The following proposition is a direct consequence of the two lemmas above:

\begin{prop}\label{impcm}
    Let $[\lambda]\in \mathfrak{P}_{n,p}$ be irreducible and $W([\lambda])$ be the cell module of $TL_{r,p,n}$ corresponding to $\lambda$. For any $[\mu]\neq [\lambda]$ and $[s],[t]\in T([\mu])$, we have
\begin{equation*}
    	C_p^{[\mu]}([s],[t])W([\lambda])=0.
\end{equation*}
\end{prop}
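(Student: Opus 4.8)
The plan is to read off the action of $a := C_p^{[\mu]}([s],[t])$ on the cell module $W([\lambda])$ directly from the cellular multiplication rule, and then to kill all the resulting matrix coefficients using Lemma \ref{impr}. Since $W([\lambda])$ is free over $R$ with basis $\{C_{[u]} : [u]\in T_p([\lambda])\}$ and the action is $a\,C_{[u]}=\sum_{[u']} r_a([u'],[u])\,C_{[u']}$, it suffices to prove that every coefficient $r_a([u'],[u])$ vanishes, and for this we may fix an arbitrary $[u]\in T_p([\lambda])$.

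First I would fix $[u]\in T_p([\lambda])$ together with an auxiliary $[v]\in T_p([\lambda])$ and apply the defining relation of the cell datum to the product $a\,C_p^{[\lambda]}([u],[v])$ inside $TL_{r,p,n}$:
\[
a\,C_p^{[\lambda]}([u],[v]) \equiv \sum_{[u']\in T_p([\lambda])} r_a([u'],[u])\,C_p^{[\lambda]}([u'],[v]) \pmod{TL_{\triangleleft_p[\lambda]}},
\]
where the coefficients $r_a([u'],[u])$ are by construction exactly those governing the cell-module action and, by the cell-datum axioms, are independent of $[v]$.

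The one substantive input comes from the preceding lemmas. Because $[\lambda]$ is irreducible and $[\mu]\neq[\lambda]$, Lemma \ref{impr} gives $a\,C_p^{[\lambda]}([u],[v]) = C_p^{[\mu]}([s],[t])\,C_p^{[\lambda]}([u],[v]) = 0$ as an honest identity in $TL_{r,p,n}$. Substituting this into the congruence above shows that $\sum_{[u']} r_a([u'],[u])\,C_p^{[\lambda]}([u'],[v])$ lies in $TL_{\triangleleft_p[\lambda]}$. Since the cellular basis elements indexed by $[\lambda]$ are $R$-linearly independent modulo $TL_{\triangleleft_p[\lambda]}$ (a defining property of the cell datum of Theorem \ref{csrpn}), each coefficient $r_a([u'],[u])$ must vanish. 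As $[u]$ was arbitrary, this yields $a\,C_{[u]}=0$ for every basis vector of $W([\lambda])$, hence $C_p^{[\mu]}([s],[t])\,W([\lambda])=0$. The right cell modules are handled identically, invoking Lemma \ref{832} in place of Lemma \ref{impr}.

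Since everything reduces to the two lemmas, I do not expect any serious obstacle; the only point requiring care is the passage from the algebra identity $a\,C_p^{[\lambda]}([u],[v])=0$ to the vanishing of the individual coefficients $r_a([u'],[u])$. This is legitimate precisely because the $r_a$ are \emph{defined} through the cellular expansion and the basis elements of index $[\lambda]$ survive linearly independently in the subquotient $TL_{\trianglelefteq_p[\lambda]}/TL_{\triangleleft_p[\lambda]}\cong TL([\lambda])$; one must keep the auxiliary $[v]$ genuinely free to vary so that the extracted coefficients are unambiguously the cell-module matrix entries rather than artifacts of a single product.
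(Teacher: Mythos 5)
Your proof is correct and takes essentially the same route as the paper, which simply states the proposition as a direct consequence of Lemmas \ref{impr} and \ref{832}; your argument is the standard spelling-out of that consequence (vanishing of the products forces the cell-module structure constants $r_a([u'],[u])$ to vanish, by linear independence of the cellular basis elements modulo $TL_{\triangleleft_p[\lambda]}$). The only superfluous remark is the final sentence about right cell modules, which is not needed for the statement as given.
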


 

The following proposition is where the name ``irreducible multipartition" comes from. {Let $L([\lambda])=W([\lambda])/rad(\phi_{[\lambda]})$, where $\phi_{[\lambda]}$ 
is the standard bilinear form on a cell module defined in Definition 2.3 of \cite{Lehrer1996}.}
 We have:
\begin{prop} \label{835}
     Let $[\lambda]\in \mathfrak{P}_{n,p}$ be an irreducible multipartition, then $W([\lambda])=L([\lambda])$. In other words, the cell module $W([\lambda])$ is simple.
\end{prop}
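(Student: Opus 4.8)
The plan is to show that, because $[\lambda]$ is irreducible, its cell splits off as a \emph{unital} direct summand of $TL_{r,p,n}$, and that the mere existence of an identity element on this summand forces the cell bilinear form to be non-degenerate; by the general theory of cellular algebras this is exactly the assertion $W([\lambda])=L([\lambda])$.

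First I would set $F_{[\lambda]}:=\sum_{\mathbf{k}}e(\mathbf{k})$, the sum of the KLR idempotents over all residue sequences $\mathbf{k}=res^{\Lambda}(s)$ with $s$ a standard tableau of shape $\mu$ for some $\mu\equiv\lambda$. Since $\equiv$ is the $\sigma_{TL}$-orbit relation, this index set is $\sigma_{TL}$-stable, so $F_{[\lambda]}$ is $\sigma_{TL}$-invariant and lies in $TL_{r,p,n}$; as a sum of orthogonal idempotents it satisfies $F_{[\lambda]}^2=F_{[\lambda]}$. The disjointness of residue sequences established in the proof of Lemma \ref{impr} (for an irreducible shape the residue sequence of a standard tableau determines its orbit) gives, on every cell basis element of (\ref{cbrpn}),
\begin{equation*}
	F_{[\lambda]}\,C_p^{[\mu]}([s],[t])=C_p^{[\mu]}([s],[t])\,F_{[\lambda]}=\delta_{[\mu],[\lambda]}\,C_p^{[\mu]}([s],[t]).
\end{equation*}
Hence $F_{[\lambda]}$ is a central idempotent, and $J:=F_{[\lambda]}\,TL_{r,p,n}$ is a two-sided ideal with $R$-basis $\{C_p^{[\lambda]}([s],[t])\mid[s],[t]\in T_p([\lambda])\}$; it is a unital algebra with identity $F_{[\lambda]}$, and by Proposition \ref{impcm} the $TL_{r,p,n}$-action on $W([\lambda])$ factors through $J$.

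Next I would regard $J$ as a cellular algebra with the single cell $[\lambda]$ and cell module $W([\lambda])$. Because $J$ is the $F_{[\lambda]}$-summand, the product of two of its basis elements has no component in $TL_{\triangleleft_p[\lambda]}$, so the cell multiplication carries no error terms:
\begin{equation*}
	C_p^{[\lambda]}([a],[b])\,C_p^{[\lambda]}([c],[d])=\phi_{[\lambda]}([b],[c])\,C_p^{[\lambda]}([a],[d]),
\end{equation*}
where $\phi_{[\lambda]}$ is the cell bilinear form whose radical defines $L([\lambda])=W([\lambda])/\mathrm{rad}\,\phi_{[\lambda]}$ in the sense of Graham and Lehrer \cite{Lehrer1996}. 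Writing the identity as $F_{[\lambda]}=\sum_{[c],[d]}\beta_{[c],[d]}\,C_p^{[\lambda]}([c],[d])$ and imposing $F_{[\lambda]}C_p^{[\lambda]}([a],[b])=C_p^{[\lambda]}([a],[b])$ yields, after comparing coefficients, $\sum_{[d]}\beta_{[c],[d]}\,\phi_{[\lambda]}([d],[a])=\delta_{[c],[a]}$ for all $[a],[c]$. Thus the matrix $(\beta_{[c],[d]})$ is a two-sided inverse of the Gram matrix $(\phi_{[\lambda]}([d],[a]))$, so $\phi_{[\lambda]}$ is non-degenerate, $\mathrm{rad}\,\phi_{[\lambda]}=0$, and $W([\lambda])=L([\lambda])$ is simple.

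The real content — and the only genuinely non-formal step — is the centrality of $F_{[\lambda]}$, equivalently the fact that the irreducible cell is a unital summand. This rests entirely on Lemma \ref{impr}, i.e. on the hypothesis $i_1\neq i_2$, which places the two components of $\lambda$ in distinct connected components of the quiver $\Gamma_{e,p}$ and makes the residue sequences of $\lambda$-tableaux disjoint from those of every other orbit. Once the summand is unital, non-degeneracy of the form is automatic, as above. By contrast, for a reducible $[\lambda]$ (both components in a single layer) the residue sequences overlap with those of strictly smaller cells, $F_{[\lambda]}$ fails to be central, and the cell form is in general degenerate; this is precisely the source of the nontrivial decomposition numbers treated in the remainder of the section.
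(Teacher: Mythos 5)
Your proof is correct, and it takes a genuinely different route from the paper's. The paper argues at the level of composition factors: by Proposition~\ref{impcm}, every $C_p^{[\mu]}([s],[t])$ with $[\mu]\neq[\lambda]$ annihilates $W([\lambda])$, while by general cellular theory each nonzero simple module $L([\mu])$ is \emph{not} annihilated by the elements of its own cell; hence $[W([\lambda]):L([\mu])]=0$ for all $[\mu]\neq[\lambda]$, and simplicity follows from the standard unitriangularity fact $[W([\lambda]):L([\lambda])]=1$. You instead upgrade the residue-sequence disjointness established inside the proof of Lemma~\ref{impr} (a finer fact than the lemma's statement, but exactly what its proof shows) into a structural statement: $F_{[\lambda]}$ is a $\sigma_{TL}$-invariant central idempotent, the cell $[\lambda]$ spans the unital two-sided summand $F_{[\lambda]}TL_{r,p,n}$, intersecting $TL_{\triangleleft_p[\lambda]}$ trivially, so the cell multiplication rule holds on the nose, and expressing $F_{[\lambda]}$ in the cell basis produces an explicit inverse of the Gram matrix of $\phi_{[\lambda]}$ (strictly, a one-sided inverse, but for square matrices over a field that is automatically two-sided), whence $\mathrm{rad}\,\phi_{[\lambda]}=0$ and $W([\lambda])=L([\lambda])$. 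Both arguments are powered by the same combinatorial core --- for irreducible $\lambda$ the residue sequences of standard tableaux of shapes in $[\lambda]$ meet no other orbit --- but the two conclusions are packaged differently: the paper's proof is shorter and leans on standard facts about decomposition numbers of cellular algebras, while yours proves non-degeneracy of the cell form directly and yields strictly more as a by-product, namely that each irreducible cell splits off as a unital semisimple (full matrix algebra) direct summand of $TL_{r,p,n}$, so that $W([\lambda])$ is in fact projective; this also explains structurally why the reducible cells, whose residue sequences do overlap with lower cells (cf. Lemma~\ref{weaklm}), are the only source of nontrivial decomposition numbers in Theorem~\ref{finaldcm}.
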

\begin{proof}
  Let $[\mu]\in \mathfrak{P}_{n,p}$ be an equivalence class different from $[\lambda]$. By Proposition \ref{impcm}, $C_p^{[\mu]}([s],[t])W([\lambda])=0$ for all $[s],[t]\in T([\mu])$. On the other hand, since $L([\mu])=W([\mu])/rad(\phi_{[\mu]})$, {assuming that $L([\mu])\neq 0$,} for any $x\in L([\mu]) $, there exist $[s],[t]\in T([\mu])$ such that $C_p^{[\mu]}([s],[t])x\neq 0$. Therefore,we have
  \begin{equation*}
      [W([\lambda]):L([\mu])]=0
  \end{equation*}
  for all $[\mu]\neq [\lambda]$ {and $L([\mu])\neq 0$. By Proposition 3.6 in \cite{Lehrer1996}, $[W([\lambda]):L([\mu])]=1$,} so $W([\lambda])$ is simple and $W([\lambda])=L([\lambda])$.
\end{proof}
{
\begin{rem}\label{rem:cells}
1. Note that the discussion above shows that for all $[\lambda]\in\mathfrak{P}_{n,p}$, the form $\phi_{[\lambda]}\neq 0$, whence the corresponding simple module $L([\lambda])\neq 0$.

2. The converse of Proposition \ref{835} is not true. As a counterexample, the multipartitions with exactly one non-empty component are reducible by definition, 
but the corresponding cell modules are always simple since they are of rank 1.
\end{rem}
}
\subsection{The reducible cells and their decomposition numbers}
As shown in the last subsection, the cell modules corresponding to irreducible orbits are simple. To determine the decomposition numbers for all the cell modules, we concentrate on those corresponding to reducible orbits in this subsection. 

Let $\Lambda$ be the dominant weight we have chosen for the specialisation $H_n^{\Lambda}(q,\zeta)$ ($cf.$ \ref{domwt}) and $TL_{r,1,n}^{\Lambda}(q,\zeta)$ be the corresponding Temperley-Lieb algebra.  For $d=\frac{r}{p}$, let $\Lambda^0$ be the dominant weight of length $d$ such that $(\Lambda^0,\alpha_{(0,j)})=(\Lambda,\alpha_{(0,j)})$ for all $j\in J$. Let $\mathcal{R}_{n}^{\Lambda^0}$ be the cyclotomic KLR algebra corresponding to $\Lambda^0$ and $TL_{d,1,n}^{\Lambda^0}$ be the Temperley-Lieb quotient we define in Theorem \ref{dftl}. 
 The following lemma indicates that $TL_{d,1,n}^{\Lambda^0}$ is a quotient of $TL_{r,1,n}^{\Lambda}(q,\zeta)$:

\begin{lem}\label{lm835}
Let $TL_{r,1,n}^{\Lambda}(q,\zeta)$ be the Temperley-Lieb quotient of $H_n^{\Lambda}(q,\zeta)$ (cf. Theorem \ref{dftl}) and $TL_{d,1,n}^{\Lambda^0}$ be the Temperley-Lieb algebra as defined above. 
	Then $TL_{d,1,n}^{\Lambda^0}$ is a quotient of $TL_{r,1,n}^{\Lambda}(q,\zeta)$ by the two-sided ideal generated by all $e(\mathbf{i})=e(\mathbf{i}_1,\mathbf{i}_2,\dots,\mathbf{i}_n)$ where $\mathbf{i}_j$ are in the vertex set $K=\mathbb{Z}/p\mathbb{Z}\times\mathbb{Z}/e\mathbb{Z}$ and $(\Lambda^0,\alpha_{\mathbf{i}_1})=0$.
\end{lem}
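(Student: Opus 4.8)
The plan is to argue entirely through the KLR presentations furnished by Theorem \ref{dftl}, so that the assertion reduces to comparing two cyclotomic ideals together with one Temperley--Lieb ideal. Recall that $TL_{r,1,n}^{\Lambda}(q,\zeta) = \mathcal{R}_n^{\Lambda}(\Gamma_{e,p})/\mathcal{J}$ and, by the same theorem applied to $\Lambda^0$, that $TL_{d,1,n}^{\Lambda^0} = \mathcal{R}_n^{\Lambda^0}/\mathcal{J}^0$, where $\mathcal{J}$ and $\mathcal{J}^0$ are the Temperley--Lieb ideals of the shape \eqref{ieq} for $\Lambda$ and $\Lambda^0$ respectively. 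Writing $\mathcal{I}$ for the image in $TL_{r,1,n}^{\Lambda}(q,\zeta)$ of the ideal $\tilde{\mathcal{I}} = \langle e(\mathbf{i}) : (\Lambda^0,\alpha_{\mathbf{i}_1}) = 0\rangle$ of $\mathcal{R}_n^{\Lambda}(\Gamma_{e,p})$, it is enough to produce an isomorphism $TL_{r,1,n}^{\Lambda}(q,\zeta)/\mathcal{I} \cong TL_{d,1,n}^{\Lambda^0}$.

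First I would identify $\mathcal{R}_n^{\Lambda^0}$ as a cyclotomic quotient of $\mathcal{R}_n^{\Lambda}(\Gamma_{e,p})$. By construction $(\Lambda^0,\alpha_{(0,j)}) = (\Lambda,\alpha_{(0,j)})$ on the layer $i=0$ and $(\Lambda^0,\alpha_{(i,j)}) = 0 \le (\Lambda,\alpha_{(i,j)})$ for $i \neq 0$, so the defining relations $y_1^{(\Lambda^0,\alpha_{\mathbf{i}_1})}e(\mathbf{i}) = 0$ are implied by the corresponding relations for $\Lambda$; this yields a surjection $\pi \colon \mathcal{R}_n^{\Lambda}(\Gamma_{e,p}) \twoheadrightarrow \mathcal{R}_n^{\Lambda^0}$. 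To compute $\ker\pi$ I would observe that $y_1^{(\Lambda^0,\alpha_{\mathbf{i}_1})}e(\mathbf{i})$ equals $y_1^{(\Lambda,\alpha_{\mathbf{i}_1})}e(\mathbf{i}) = 0$ whenever $\mathbf{i}_1$ lies on layer $0$, while it equals $e(\mathbf{i})$ precisely when $(\Lambda^0,\alpha_{\mathbf{i}_1}) = 0$; hence $\ker\pi = \tilde{\mathcal{I}}$. The correspondence theorem for the nested quotients then gives
\begin{equation*}
TL_{r,1,n}^{\Lambda}(q,\zeta)/\mathcal{I} = \mathcal{R}_n^{\Lambda}(\Gamma_{e,p})/(\mathcal{J} + \tilde{\mathcal{I}}) = \mathcal{R}_n^{\Lambda^0}/\pi(\mathcal{J}),
\end{equation*}
so the lemma will follow once I show $\pi(\mathcal{J}) = \mathcal{J}^0$.

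I expect this last identification to be the main obstacle. The key input is that a KLR idempotent $e(\mathbf{k})$ is nonzero exactly when $\mathbf{k}$ is the residue sequence of a standard tableau; since $\Gamma_{e,p}$ has no edges between distinct layers and $\Lambda^0$ is supported on layer $0$, every such sequence in $\mathcal{R}_n^{\Lambda^0}$ lies wholly on layer $0$, forcing $\pi(e(\mathbf{k})) = 0$ unless all entries of $\mathbf{k}$ have the form $(0,\cdot)$. For a layer-$0$ sequence, condition \eqref{con1} degenerates to $\mathbf{k}_1 = (0,j)$, $\mathbf{k}_2 = (0,j+1)$, and because $\Lambda$ and $\Lambda^0$ agree on layer $0$, condition \eqref{con2} becomes $(\alpha_{\mathbf{k}_l},\Lambda^0) > 0$ for $l=1,2,3$; these are exactly the generators of $\mathcal{J}^0$. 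Carefully checking that no surviving generator of $\mathcal{J}$ is missed and that no spurious generator of $\mathcal{J}^0$ arises will give $\pi(\mathcal{J}) = \mathcal{J}^0$, whence $TL_{r,1,n}^{\Lambda}(q,\zeta)/\mathcal{I} \cong \mathcal{R}_n^{\Lambda^0}/\mathcal{J}^0 = TL_{d,1,n}^{\Lambda^0}$, as required.
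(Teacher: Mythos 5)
Your proposal is correct in substance and shares the paper's global skeleton — both arguments identify $\mathcal{R}_n^{\Lambda^0}$ with $\mathcal{R}_n^{\Lambda}(\Gamma_{e,p})/\tilde{\mathcal{I}}$ and then reduce the lemma to comparing the two Temperley--Lieb ideals — but it diverges from the paper at the crucial step. The paper works \emph{upstairs} in $\mathcal{R}_n^{\Lambda}(\Gamma_{e,p})$ and proves the ideal identity $\langle\tilde{\mathcal{I}},\mathcal{J}_n(\Lambda)\rangle=\langle\tilde{\mathcal{I}},\mathcal{J}_n(\Lambda^0)\rangle$ generator by generator; the only nonobvious case is a generator $e(i_1,i_2,i_3,\mathbf{i})$ from \eqref{ieq} with $(\Lambda^0,\alpha_{i_1})>0$ but $(\Lambda^0,\alpha_{i_2})=0$ or $(\Lambda^0,\alpha_{i_3})=0$, which the paper absorbs into $\tilde{\mathcal{I}}$ via the explicit identity $e(i_1,i_2,i_3,\mathbf{i})=\psi_1e(i_2,i_1,i_3,\mathbf{i})\psi_1$ (valid because vertices on distinct layers are non-adjacent in $\Gamma_{e,p}$, so $\psi_1^2$ acts as the identity on such idempotents). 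You instead work \emph{downstairs}, proving $\pi(\mathcal{J})=\mathcal{J}^0$, and you dispose of these same problematic generators — indeed of every generator whose index sequence leaves layer $0$ anywhere — in one stroke by citing the vanishing criterion: $e(\mathbf{k})\neq 0$ in a cyclotomic KLR algebra only if $\mathbf{k}$ is the residue sequence of a standard tableau, and residue sequences for the layer-$0$ weight $\Lambda^0$ never leave layer $0$. Your route buys a cleaner, uniform case analysis at the price of importing a nontrivial external result (the Hu--Mathas-type nonvanishing criterion), whereas the paper's $\psi_1$-conjugation trick stays entirely within the defining relations and is self-contained; the two are otherwise interchangeable, since $\pi(\mathcal{J})=\mathcal{J}^0$ is equivalent to the paper's ideal identity under the correspondence theorem.

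One slip you should repair: your justification for the existence of $\pi$ runs backwards. Since $(\Lambda^0,\alpha_{\mathbf{i}_1})\le(\Lambda,\alpha_{\mathbf{i}_1})$, the relation $y_1^{(\Lambda^0,\alpha_{\mathbf{i}_1})}e(\mathbf{i})=0$ is the \emph{stronger} one and is certainly not implied by $y_1^{(\Lambda,\alpha_{\mathbf{i}_1})}e(\mathbf{i})=0$; as written, your implication would produce a surjection in the wrong direction. The correct statement is that the $\Lambda$-cyclotomic relations hold in $\mathcal{R}_n^{\Lambda^0}$ (multiply the $\Lambda^0$-relation by $y_1^{(\Lambda,\alpha_{\mathbf{i}_1})-(\Lambda^0,\alpha_{\mathbf{i}_1})}$), so the projection from the affine KLR algebra onto $\mathcal{R}_n^{\Lambda^0}$ factors through $\mathcal{R}_n^{\Lambda}(\Gamma_{e,p})$, giving $\pi$. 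Your subsequent computation of $\ker\pi=\tilde{\mathcal{I}}$ is then correct as stated.
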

\begin{proof}
   Let $\mathcal{R}_n^{\Lambda}$ be the KLR algebra isomorphic to $H_n^{\Lambda}(q,\zeta)$ (cf.Theorem 1.1 in \cite{Brundan_2009}).
	By comparing the generators and relations, we notice that $\mathcal{R}_n^{\Lambda^0}$ is a quotient of $\mathcal{R}_n^{\Lambda}$ by the two-sided ideal generated by all $e(\mathbf{i})$ such that $(\Lambda^0,\alpha_{\mathbf{i}_1})=0$ where $\mathbf{i}\in K^n$ and $\alpha_{\mathbf{i}_1}$ is the simple root {defined immediately after $(\ref{216})$}.
	
	By Theorem \ref{dftl}, $TL_{r,1,n}^{\Lambda}(q,\zeta)$ is a quotient of $\mathcal{R}_n^{\Lambda}$ by the two-sided ideal $\mathcal{J}_n(\Lambda)$ in (\ref{ieq}), and $TL_{d,1,n}^{\Lambda^0}$ is a quotient of $\mathcal{R}_n^{\Lambda^0}$ by the two-sided ideal $\mathcal{J}_n(\Lambda^0)$ in (\ref{ieq}). We only need to show that generators of $\mathcal{J}_n(\Lambda)$ in (\ref{con1}) and (\ref{con2}) are in the two-sided ideal of $\mathcal{R}_n^{\Lambda}$ generated by all $e(\mathbf{i})$ such that $(\Lambda^0,\alpha_{\mathbf{i}_1})=0$ and $\mathcal{J}_n(\Lambda^0)$. 
    {If $e(\mathbf{i})$ satisfies (\ref{con1}) with $\mathbf{i}_1$ not in form $(0,*)$, we have $(\Lambda^0,\alpha_{\mathbf{i}_1})=0$ because $\Lambda^0$ only consists of simple roots of the form $\Lambda_{(0,j)}$. If $e(\mathbf{i})$ satisfies (\ref{con1}) with $\mathbf{i}_1= (0,*)$, we have $(\Lambda^0,\alpha_{\mathbf{i}_1})=(\Lambda,\alpha_{\mathbf{i}_1})>0$ by the definition of $\Lambda^0$, thus $e(\mathbf{i}$ satisfies (\ref{con1}) for $\mathcal{J}_n(\Lambda^0)$. If  $e(\mathbf{i})$ satisfies (\ref{con2}), we also have $(\Lambda^0,\alpha_{\mathbf{i}_1})=(\Lambda,\alpha_{\mathbf{i}_1})>0$. }
    If $(\Lambda^0,\alpha_{\mathbf{i}_2})=0$ or $(\Lambda^0,\alpha_{\mathbf{i}_3})=0$, without losing generality, assume $(\Lambda^0,\alpha_{\mathbf{i}_2})=0$.  In this case, we use the fact {that} $e(\mathbf{i}_1,\mathbf{i}_2,\mathbf{i}_3,\mathbf{i}')=\psi_1e(\mathbf{i}_2,\mathbf{i}_1,\mathbf{i}_3,\mathbf{i}')\psi_1$ {(cf. \ref{sym})}, to show that $e(\mathbf{i}_1,\mathbf{i}_2,\mathbf{i}_3,\mathbf{i}')$ is in the two-sided ideal generated by all $e(\mathbf{i})$ such that $(\Lambda^0,\alpha_{\mathbf{i}_1})=0$.  
	
	Therefore, the ideal of $\mathcal{R}_n^{\Lambda}$ corresponding to $TL_{r,1,n}^{\Lambda}(q,\zeta)$ is in that corresponding to $TL_{d,1,n}^{\Lambda^0}$. So $TL_{d,1,n}^{\Lambda^0}$ is a quotient of $TL_{r,1,n}^{\Lambda}(q,\zeta)$. Moreover, we have:
	\begin{equation}
	    \begin{aligned}
	           TL_{d,1,n}^{\Lambda^0}&=\mathcal{R}_n^{\Lambda^0}/\mathcal{J}_n(\Lambda^0)\\
	           &=\mathcal{R}_n^{\Lambda}/\langle e(\mathbf{i}),\mathcal{J}_n(\Lambda^0)\rangle\\
	           &=\mathcal{R}_n^{\Lambda}/\langle e(\mathbf{i}),\mathcal{J}_n(\Lambda)\rangle\\
	           &=TL_{r,1,n}^{\Lambda}(q,\zeta)/\langle e(\mathbf{i})\rangle,
	    \end{aligned}
	\end{equation}
	where $\mathbf{i}$ runs over all $\mathbf{i}\in K^n$ such that $(\Lambda^0,\alpha_{\mathbf{i}_1})=0$.
\end{proof}
Let $f:TL_{r,1,n}^{\Lambda}(q,\zeta)\to TL_{d,1,n}^{\Lambda^0}$ be the natural quotient map. We have
	\begin{equation}
	    \begin{aligned}
	           f(y_k)&=y_k, \textit{ for } 1\leq k\leq n;\\
	           f(\psi_l)&=\psi_l, \textit{ for } 1\leq l \leq n-1;\\
	           f(e_{\lambda})&=
	           \begin{cases}
	               e_{\lambda}, \textit{ if } \lambda\in \mathfrak{P}_n^1;\\
	               0, \textit{ if } \lambda\in \mathfrak{P}_n- \mathfrak{P}_n^1,
	           \end{cases}
	    \end{aligned}
	\end{equation}
	where $\mathfrak{P}_n$ is the set of 3D multipartitions of $n$ with at most two non-empty components, and $\mathfrak{P}_n^1$ is the subset consisting of those 3D multipartitions with all the non-empty components in the first column of the floor table.
Since $TL_{r,p,n}^{\Lambda}$ is a subalgebra of $TL_{r,1,n}^{\Lambda}(q,\zeta)$, $f(TL_{r,p,n}^{\Lambda})$ is a subalgebra of $TL_{d,1,n}^{\Lambda^0}$. We show the map $f$ is surjective. By Theorem \ref{cb419}, $TL_{d,1,n}^{\Lambda^0}$ has a cellular basis $\{C_{s,t}^{\lambda}\}$ corresponding to the cell datum $(\mathfrak{B}_n^{(d)},Std,C,deg)$, {where $\mathfrak{B}_n^{(d)}$ is the poset of one-column $d$-partitions of $n$ with at most two non-empty components}. For any $\lambda_0\in \mathfrak{B}_n^{(d)}$ and $s_0,t_0 \in Std(\lambda_0)$, let $j_1,j_2,\dots,j_n\in\mathbb{Z}/e\mathbb{Z}$ be such that $e_{\lambda_0}=e((0,j_1),(0,j_2),\dots,(0,j_n))$. We have $C_p^{[\lambda_0]}([s_0],[t_0])\in TL_{r,p,n}^{\Lambda}$ and 
\begin{equation}\label{eqquomap}
\begin{aligned}
    f(C_p^{[\lambda_0]}([s_0],[t_0]))&=f(\psi_{d([s_0])}^*(\sum_{\lambda'\equiv\lambda_0}e_{\lambda'})\psi_{d([t_0])})\\
    &=\psi_{d([s_0])}^*f(\sum_{\lambda'\equiv\lambda_0}e_{\lambda'})\psi_{d([t_0])}\\
    &=\psi_{d([s_0])}^*f(\sum_{i=0}^{p-1}e((i,j_1),(i,j_2),\dots,(i,j_n)))\psi_{d([t_0])}\\
                              &=\psi_{d([s_0])}^*e((0,j_1),(0,j_2),\dots,(0,j_n))\psi_{d([t_0])}\\
                              &=C_{s_0,t_0}^{\lambda_0}.
\end{aligned}
\end{equation}
Therefore, the restriction of the natural map $f$ on $TL_{r,p,n}^{\Lambda}$ is surjective, that is  $f(TL_{r,p,n}^{\Lambda})=TL_{d,1,n}^{\Lambda^0}$.

The next lemma shows that the kernel of this map is the subspace spanned by the elements corresponding to irreducible orbits in the cellular basis in $(\ref{cbrpn})$. 

\begin{lem}\label{836}
Let $TL_{r,p,n}^1$ be the subspace spanned by all elements $C_p^{[\lambda]}([s],[t])$ in the cellular basis of $TL_{r,p,n}^{\Lambda}$ in (\ref{cbrpn}), where $[\lambda]$ is irreducible. Then $TL_{r,p,n}^1$ is the kernel of $f|_{TL_{r,p,n}^{\Lambda}}$ where $f$ is the natural quotient map defined above. Moreover, $TL_{r,p,n}^1$ is an ideal of $TL_{r,p,n}^{\Lambda}$ and
\begin{equation}
    TL_{d,1,n}^{\Lambda^0}\cong TL_{r,p,n}^{\Lambda}/TL_{r,p,n}^1
\end{equation}
\end{lem}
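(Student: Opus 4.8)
The plan is to compute the image under $f$ of each element of the cellular basis $\{C_p^{[\lambda]}([s],[t])\}$ of $TL_{r,p,n}^{\Lambda}$ and then invoke the first isomorphism theorem. Since $TL_{r,p,n}^1$ is by definition the $R$-span of those basis elements indexed by the irreducible orbits, and since $f|_{TL_{r,p,n}^{\Lambda}}$ has already been shown to be surjective, it suffices to prove that $f$ annihilates exactly the irreducible basis elements and carries each reducible basis element to a distinct element of the cellular basis of $TL_{d,1,n}^{\Lambda^0}$.

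First I would separate the two cases. If $[\lambda]$ is reducible, its unique column-$0$ representative $\lambda_0$ lies in $\mathfrak{P}_n^1$, so all other summands of $\sum_{\mu\equiv\lambda}e_{\mu}$ are killed by $f$ and the computation (\ref{eqquomap}) gives $f(C_p^{[\lambda]}([s],[t]))=C_{s_0,t_0}^{\lambda_0}$, a cellular basis element of $TL_{d,1,n}^{\Lambda^0}$. If instead $[\lambda]$ is irreducible, then no $\mu\equiv\lambda$ has its non-empty components confined to the first column, so $f(e_{\mu})=0$ for every such $\mu$; hence $f(\sum_{\mu\equiv\lambda}e_{\mu})=0$ and $f(C_p^{[\lambda]}([s],[t]))=0$. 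This establishes the inclusion $TL_{r,p,n}^1\subseteq\ker(f|_{TL_{r,p,n}^{\Lambda}})$, and because $f|_{TL_{r,p,n}^{\Lambda}}$ is an algebra homomorphism its kernel is automatically a two-sided ideal of $TL_{r,p,n}^{\Lambda}$.

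For the reverse inclusion I would argue by linear independence. The assignment $[\lambda]\mapsto\lambda_0$ sending a reducible orbit to its column-$0$ representative is a bijection onto the index set $\mathfrak{B}_n^{(d)}-\mathfrak{D}_n^{(d)}$ of the cellular basis of $TL_{d,1,n}^{\Lambda^0}$, and under it the classes $[s],[t]$ correspond to the standard tableaux $s_0,t_0$. Consequently, as $[\lambda]$ runs over the reducible orbits and $[s],[t]$ over $T_p([\lambda])$, the images $C_{s_0,t_0}^{\lambda_0}$ run without repetition over the entire cellular basis of $TL_{d,1,n}^{\Lambda^0}$, which is linearly independent by Theorem \ref{cb419}. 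Writing any $x\in TL_{r,p,n}^{\Lambda}$ as $x=x_1+x_0$, with $x_1$ in the span of the irreducible basis elements and $x_0$ in the span of the reducible ones, we obtain $f(x)=f(x_0)$; the linear independence just noted then forces $f(x)=0$ to imply $x_0=0$, that is $x=x_1\in TL_{r,p,n}^1$. Hence $\ker(f|_{TL_{r,p,n}^{\Lambda}})=TL_{r,p,n}^1$, and the surjectivity established before the lemma together with the first isomorphism theorem yields $TL_{r,p,n}^{\Lambda}/TL_{r,p,n}^1\cong TL_{d,1,n}^{\Lambda^0}$.

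I expect the main point requiring care to be the bijection between the reducible orbits (together with their standard tableaux) and the cellular datum $(\mathfrak{B}_n^{(d)}-\mathfrak{D}_n^{(d)},Std,C,deg)$ of $TL_{d,1,n}^{\Lambda^0}$, since it is precisely this matching that guarantees the reducible basis elements map to linearly independent images. Once this combinatorial correspondence is pinned down from the definitions of the equivalence relation $\equiv$ in (\ref{equmpar}) and of $\mathfrak{P}_n^1$, everything else is formal bookkeeping with the basis and the homomorphism property of $f$.
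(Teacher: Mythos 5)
Your proof is correct and follows essentially the same route as the paper: evaluate $f$ on each cellular basis element $C_p^{[\lambda]}([s],[t])$, show the irreducible ones are annihilated while the reducible ones map, via the computation (\ref{eqquomap}), onto the full cellular basis of $TL_{d,1,n}^{\Lambda^0}$, and conclude with the first isomorphism theorem. The one substantive difference is how the irreducible case is justified. You cite the displayed formula $f(e_\mu)=0$ for $\mu\in\mathfrak{P}_n\setminus\mathfrak{P}_n^1$; the paper does not take this assertion for granted but re-derives it inside the proof using Lemma \ref{lm835}: writing $e_{[\lambda]}=\sum_{i=0}^{p-1}e((i_1-i,j_{l_1}),(i_2-i,j_{l_2}),\sigma_1^i(\mathbf{k}))$, the summands with $i\neq i_1$ have first residue with nonzero first coordinate and hence are among the generators of the kernel ideal, but the summand with $i=i_1$ has first residue $(0,j_{l_1})$, pairing nontrivially with $\Lambda^0$, and so is \emph{not} a generator; it is placed in the ideal only through the conjugation identity $e(\mathbf{i})=\psi_1 e(s_1\cdot\mathbf{i})\psi_1$, valid because the first two residues lie in different layers of $\Gamma_{e,p}$. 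This is the only non-formal step of the whole lemma, so if one regards the paper's displayed formula for $f(e_\mu)$ as an unproved assertion rather than an established fact, your proof should incorporate that argument rather than cite the formula. On the other side of the ledger, your treatment of the reverse inclusion is more careful than the paper's: the paper merely remarks that reducible basis elements are not in $\ker f$, whereas you correctly observe that one needs their images to be linearly independent --- indeed to run without repetition over the cellular basis of $TL_{d,1,n}^{\Lambda^0}$ from Theorem \ref{cb419} --- in order to conclude that $\ker(f|_{TL_{r,p,n}^{\Lambda}})$ is exactly the span of the irreducible basis elements.
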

\begin{proof}
We first show that $f(C_p^{[\lambda]}([u],[v]))=0$ if $[\lambda]$ is irreducible. Using the notation introduced before Definition \ref{2PO}, let $\lambda=\lambda_{(i_1,l_1),(i_2,l_2)}^{[a]}$. As $\lambda$ is irreducible, we have $i_1\neq i_2$ and $e_{\lambda}=e((i_1,j_{l_1}),(i_2,j_{l_2}),\mathbf{k})$ with $\mathbf{k}\in K^{n-2}$. We have
\begin{equation}
\begin{aligned}
       e_{[\lambda]}&=\sum_{\lambda'\equiv\lambda}e_{\lambda}  \\
       &=\sum_{i=0}^{p-1} e((i_1-i,j_{l_1}),(i_2-i,j_{l_2}),\sigma_1^i(\mathbf{k})),
\end{aligned}
\end{equation}
where $\sigma_1$ is the map defined in (\ref{8.14}). For $i\neq i_1$, $e((i_1-i,j_{l_1}),(i_2-i,j_{l_2}),\sigma_1^i(\mathbf{k}))$ is in the ideal described in Lemma \ref{lm835} {because $\Lambda^0$ 
consists precisely of the simple roots of the form $\Lambda_{(0,j)}$, so that $(\Lambda^0,\alpha_{(i_1-i,j_{l_1})})=0$}. If $i=i_1$, we have $i_2-i\neq 0$ and $e((i_1-i,j_{l_1}),(i_2-i,j_{l_2}),\sigma_1^i(\mathbf{k}))=\psi_1 e((i_2-i,j_{l_2}),(i_1-i,j_{l_1}),\sigma_1^i(\mathbf{k}))\psi_1$. So $e((i_1-i,j_{l_1}),(i_2-i,j_{l_2}),\sigma_1^i(\mathbf{k}))$ is in that ideal. Then we have $e_{[\lambda]}\in ker(f)$, thus $C_p^{[\lambda]}([u],[v])\in ker(f)$ for any $u,v\in Std(\lambda)$.
Therefore, we have $TL_{r,p,n}^1\subseteq ker(f) $.

 On the other hand, for any reducible orbit $[\mu]\in \mathfrak{P}_{n,p}$ and $[s],[t]\in T_p([\mu])$(cf.\ref{t_p}), equation (\ref{eqquomap}) shows that $C_p^{[\mu]}([s],[t])$ is not in the kernel of $f$. So $TL_{r,p,n}^1= ker(f|_{TL_{r,p,n}^{\Lambda}}) $. Since $f|_{TL_{r,p,n}^{\Lambda}}$ is a surjective algebra homomorphism, $TL_{r,p,n}^1$ is an ideal of $TL_{r,p,n}^{\Lambda}$ and
\begin{equation*}
    TL_{d,1,n}^{\Lambda^0}\cong TL_{r,p,n}^{\Lambda}/TL_{r,p,n}^1.
\end{equation*}\end{proof}

The next lemma is an immediate consequence of the definition of cellular algebras and shows that $ TL_{d,1,n}^{\Lambda^0}$ inherits a cellular structure from $TL_{r,p,n}^{\Lambda}$ as a quotient.

\begin{lem}
Let the algebras  $TL_{r,p,n}^{\Lambda}$ and $TL_{r,p,n}^1$ be as defined above. Then $TL_{r,p,n}^{\Lambda}/TL_{r,p,n}^1$ is a graded cellular algebra with cellular datum $(\mathfrak{P}_{n,p}^0,T_p,\mathfrak{C}_p,deg_p)$, where $\mathfrak{P}_{n,p}^0$ is the subset of $\mathfrak{P}_{n,p}$ consisting of all the reducible orbits, $T_p,deg_p$ are the same as the ones in Theorem \ref{csrpn} and $\mathfrak{C}_p^{[\mu]}([s],[t])=C_p^{[\mu]}([s],[t])+TL_{r,p,n}^1$ for all $[\mu]\in \mathfrak{P}_{n,p}^0$ and $[s],[t]\in T_p( [\mu])$.
\end{lem}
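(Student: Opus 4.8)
The plan is to invoke the general principle that the quotient of a graded cellular algebra by a two-sided ideal which is the $R$-span of a subset of its cellular basis is again graded cellular, with cell datum obtained by restriction. Write $A:=TL_{r,p,n}^{\Lambda}$, which by Theorem \ref{csrpn} is graded cellular with cell datum $(\mathfrak{P}_{n,p},T_p,C_p,\deg_p)$ and order $\trianglelefteq_p$, and write $I:=TL_{r,p,n}^1$. By Lemma \ref{836}, $I=\ker(f|_{TL_{r,p,n}^{\Lambda}})$ is a two-sided ideal which is precisely the $R$-span of those cellular basis elements $C_p^{[\lambda]}([s],[t])$ with $[\lambda]$ irreducible; equivalently, $I$ is spanned by the part of the basis indexed by $\mathfrak{P}_{n,p}^1$, and the complementary basis elements are exactly those indexed by the reducible orbits $\mathfrak{P}_{n,p}^0=\mathfrak{P}_{n,p}\setminus\mathfrak{P}_{n,p}^1$. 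The strategy is then to verify the cellular axioms for $(\mathfrak{P}_{n,p}^0,T_p,\mathfrak{C}_p,\deg_p)$ directly, where $\mathfrak{C}_p^{[\mu]}([s],[t])=C_p^{[\mu]}([s],[t])+I$ and $\trianglelefteq_p$ is restricted to $\mathfrak{P}_{n,p}^0$.

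First I would record the easy axioms. Since $I$ is the span of the complementary basis elements, the images $\mathfrak{C}_p^{[\mu]}([s],[t])$ with $[\mu]\in\mathfrak{P}_{n,p}^0$ form a homogeneous $R$-basis of $A/I$ of the correct degrees, giving $(C_1)$ and $(C_2)$. For $(C_4)$, the anti-isomorphism $*$ of $A$ satisfies $(C_p^{[\lambda]}([s],[t]))^{*}=C_p^{[\lambda]}([t],[s])$ because the poset involution in Theorem \ref{csrpn} is trivial; as $*$ fixes the orbit $[\lambda]$ and hence preserves irreducibility, it maps $I$ into $I$ and so descends to an anti-isomorphism of $A/I$ satisfying $(C_4)$ for the proposed datum.

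The key step is axiom $(C_3)$ in the quotient with the restricted order. Writing $A_{\triangleleft_p[\mu]}$ for the span of the $C_p^{[\nu]}$ with $[\nu]\triangleleft_p[\mu]$, axiom $(C_3)$ in $A$ gives, for $[\mu]\in\mathfrak{P}_{n,p}^0$ and $a\in A$,
\[
a\,C_p^{[\mu]}([s],[t])\equiv\sum_{[s']}r_a([s'],[s])\,C_p^{[\mu]}([s'],[t])\pmod{A_{\triangleleft_p[\mu]}},
\]
with the $r_a([s'],[s])$ independent of $[t]$. Reducing modulo the ideal $I$, the crucial identity to establish is
\[
\big(A_{\triangleleft_p[\mu]}+I\big)/I=(A/I)_{\triangleleft_p[\mu]},
\]
where the right-hand side denotes the span of the $\mathfrak{C}_p^{[\nu]}$ with $[\nu]\in\mathfrak{P}_{n,p}^0$ and $[\nu]\triangleleft_p[\mu]$. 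This holds because $A_{\triangleleft_p[\mu]}+I$ is spanned by all $C_p^{[\nu]}$ with $[\nu]\triangleleft_p[\mu]$ together with all $C_p^{[\nu]}$ with $[\nu]$ irreducible, and modulo $I$ the latter vanish while, among the former, only those with $[\nu]$ reducible survive. Hence the displayed relation descends to $A/I$ modulo $(A/I)_{\triangleleft_p[\mu]}$ with the identical coefficients $r_a([s'],[s])$, still independent of $[t]$, which is exactly $(C_3)$ for $(\mathfrak{P}_{n,p}^0,T_p,\mathfrak{C}_p,\deg_p)$.

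I do not anticipate a genuine obstacle: the entire content is the bookkeeping of lower terms, and everything is forced once Lemma \ref{836} identifies $I$ as a two-sided ideal spanned by the irreducible part of the cellular basis. The only point requiring care is confirming the displayed identity $(A_{\triangleleft_p[\mu]}+I)/I=(A/I)_{\triangleleft_p[\mu]}$, i.e.\ that the strictly-lower space computed in the quotient coincides with the image of the strictly-lower space upstairs; note in particular that no downward-closedness of $\mathfrak{P}_{n,p}^1$ is needed, since we already know independently that $I$ is an ideal. Once this is checked, cellularity of $A/I\cong TL_{d,1,n}^{\Lambda^0}$ with the stated cell datum follows immediately.
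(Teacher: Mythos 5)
Your proposal is correct and is exactly the routine verification the paper has in mind: the paper offers no written proof, asserting the lemma is ``an immediate consequence of the definition of cellular algebras,'' and your argument---using Lemma \ref{836} to identify $TL_{r,p,n}^1$ as the two-sided ideal spanned by the irreducible part of the cellular basis of Theorem \ref{csrpn}, then checking $(C_1)$--$(C_4)$ for the quotient with the restricted poset, with the key identity $(A_{\triangleleft_p[\mu]}+I)/I=(A/I)_{\triangleleft_p[\mu]}$---is precisely that omitted verification. Your observation that no downward-closedness of $\mathfrak{P}_{n,p}^1$ is needed, because the ideal property comes independently from Lemma \ref{836} rather than from the Graham--Lehrer order-ideal construction, is a correct and worthwhile point of care.
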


For $[\mu]\in \mathfrak{P}_{n,p}^0$ and $[\lambda]\in \mathfrak{P}_{n,p}^1$, Lemma \ref{832} implies
\begin{equation}
    	C_p^{[\lambda]}([u],[v])W([\mu])=0.
\end{equation}
Thus the kernel of the map $f$ above acts trivially on $W([\mu])$, whence the  action of $TL_{r,p,n}$ on the cell module $W([\mu])$ is exactly the same as that of $TL_{d,1,n}^{\Lambda^0}$. Hence we need only to determine the decomposition numbers for the cell modules $W([\mu])$, regarded as $TL_{d,1,n}^{\Lambda^0}$-modules.

By Theorem \ref{cb419}, the generalised Temperley-Lieb algebra $TL_{d,1,n}^{\Lambda^0}$ has another cell datum $(\mathfrak{B}_n^{(d)}, Std, C, deg)$ where $\mathfrak{B}_n^{(d)}$ consists of the 3D multipartitions with at most two non-empty components and both of them are in the first column of the floor table. The decomposition numbers corresponding to this cell datum are given in the last chapter. To show that these two cell data are the same, we first choose a special representative in each reducible orbit.
\begin{defn} \label{orep}
    For any reducible orbit $[\mu]\in \mathfrak{P}_{n,p}^0$, define the original representative $\mu_0$ in $[\mu]$ as the multipartition such that all the non-empty components are in the first column of the floor table.
\end{defn}
 Since the non-empty components in a reducible multipartition are in the same column, and the map $\sigma_{\mathfrak{P}_n}$ moves multipartitions one column ahead, there exists a unique original representative $\mu_0$ in each reducible orbit $[\mu]$. For $[t]\in\mathcal{T}_p([\mu])$, denote by $t_0\in [t]$ the standard tableau of shape $\mu_0$. Let $f:TL_{r,p,n}^{\Lambda}\to TL_{d,1,n}^{\Lambda^0}$ be the natural quotient map.Then we have
 \begin{align*}
     f(C_p^{[\mu]}([s],[t]))&=f(\psi_{d([s])}^*(\sum_{\mu'\equiv\mu_0}e_{\mu'})\psi_{d([t])})\\
    &=\psi_{d([s])}^*f(\sum_{i=0}^{p-1}e_{\mu_i})\psi_{d([t])}\\
    &=\psi_{d(s_0)}^*f(e_{\mu_0})\psi_{d(t_0)}\\
  &=C^{\mu_0}(s_0,t_0).
\end{align*}
Therefore, we have 
\begin{equation}
    \mathfrak{C}_p^{[\mu]}([s],[t])=C^{\mu_0}(s_0,t_0),
\end{equation}
for all  $[\mu]\in \mathfrak{P}_{n,p}^0$ and $[s],[t]\in T_p([\mu])$. As a direct consequence, we have
\begin{lem}
For any reducible orbit $[\mu]$, let $ W([\mu])$(resp. $L([\mu])$) be the cell ( resp. simple) module of $TL_{d,1,n}^{\Lambda^0}$ with respect to the cell datum $(\mathfrak{P}_{n,p}^0,T_p,\mathfrak{C}_p,deg_p)$. Denote by $\mu_0$ the original representative in $[\mu]$. Let $W(\mu_0)$ ($L(\mu_0)$) be the cell (resp. simple) module with respect to the cell datum $(\mathfrak{B}_n^{(d)}, Std, C, deg)$. Then we have
\begin{equation*}
        W([\mu])\cong W(\mu_0);\qquad
        L([\mu])\cong L(\mu_0).
\end{equation*}
\end{lem}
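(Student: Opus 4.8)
The plan is to upgrade the element-wise identity $\mathfrak{C}_p^{[\mu]}([s],[t])=C^{\mu_0}(s_0,t_0)$ established just above into a genuine isomorphism of graded cell data between $(\mathfrak{P}_{n,p}^0,T_p,\mathfrak{C}_p,deg_p)$ and $(\mathfrak{B}_n^{(d)}-\mathfrak{D}_n^{(d)},Std,C,deg)$, both regarded as cell data for $TL_{d,1,n}^{\Lambda^0}$ via the isomorphism $TL_{r,p,n}^{\Lambda}/TL_{r,p,n}^1\cong TL_{d,1,n}^{\Lambda^0}$ of Lemma \ref{836}. Once the two cell data are matched, the asserted isomorphisms $W([\mu])\cong W(\mu_0)$ and $L([\mu])\cong L(\mu_0)$ follow formally, since both cell modules and simple modules are invariants of the cellular structure.

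First I would record the bijections that implement the matching. By Definition \ref{orep} and the remark following it, each reducible orbit $[\mu]\in\mathfrak{P}_{n,p}^0$ has a unique original representative $\mu_0$, whose non-empty components lie in the first column; as these representatives are precisely the elements of $\mathfrak{B}_n^{(d)}-\mathfrak{D}_n^{(d)}$, the assignment $[\mu]\mapsto\mu_0$ is a bijection of index sets. For each $[\mu]$, the rule $[t]\mapsto t_0$ (the tableau of shape $\mu_0$ in the class $[t]$) is a bijection $T_p([\mu])\to Std(\mu_0)$, and it preserves degree because $deg_p([t])=deg(s)$ for every $s\equiv t$, in particular $deg_p([t])=deg(t_0)$.

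The hard part will be the compatibility of the two partial orders, namely that $[\nu]\triangleleft_p[\mu]$ holds (for reducible orbits) if and only if $\nu_0\triangleleft\mu_0$ in $\mathfrak{B}_n^{(d)}-\mathfrak{D}_n^{(d)}$. The implication ``$\Leftarrow$'' is immediate from the definition of $\triangleleft_p$, taking the original representatives themselves. For ``$\Rightarrow$'' I would choose representatives $\nu_1\equiv\nu$ and $\mu_1\equiv\mu$, lying in columns $c$ and $c'$, with $\nu_1\triangleleft'\mu_1$, and run through the cases of Definition \ref{2PO}. The mechanism is uniform: every comparison in the definition is built from comparisons $(i,l)\leq'(i',l')$ in $\mathcal{K}$, and whenever $c\neq c'$ such a cross-column comparison forces the strict row inequality $l<l'$, which in turn yields $(0,l)\leq'(0,l')$; when $c=c'$ the defining inequalities involve only row indices and the integers $a,b$ and so are unaffected by shifting both multipartitions to the first column. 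In either case $\nu_0\triangleleft'\mu_0$, and since $\trianglelefteq'$ and $\trianglelefteq$ coincide on first-column multipartitions and restrict there to the cell order of $\mathfrak{B}_n^{(d)}-\mathfrak{D}_n^{(d)}$, this gives $\nu_0\triangleleft\mu_0$.

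Finally I would assemble the pieces. The bijections above, together with the identity $\mathfrak{C}_p^{[\mu]}([s],[t])=C^{\mu_0}(s_0,t_0)$ and the order compatibility, show that $f$ induces an isomorphism of cell data: it carries the cellular basis of $TL_{r,p,n}^{\Lambda}/TL_{r,p,n}^1$ onto that of $TL_{d,1,n}^{\Lambda^0}$ while preserving labels, degrees, and the order ideals $TL_{\triangleleft_p[\mu]}$. Hence the structure constants $r_a$ that define the two families of cell modules coincide, so $W([\mu])\cong W(\mu_0)$ as $TL_{d,1,n}^{\Lambda^0}$-modules; and since the Gram matrix of the cellular form is computed from these same structure constants, its radical is identified under $f$, giving $L([\mu])=W([\mu])/rad(\phi_{[\mu]})\cong W(\mu_0)/rad(\phi_{\mu_0})=L(\mu_0)$.
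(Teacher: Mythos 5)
Your proposal is correct and follows the same route as the paper: the paper also deduces the lemma directly from the identity $\mathfrak{C}_p^{[\mu]}([s],[t])=C^{\mu_0}(s_0,t_0)$, established by the computation with the quotient map $f$ immediately before the statement, together with the obvious matching of labels $[\mu]\leftrightarrow\mu_0$, $[t]\leftrightarrow t_0$.

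The one place where you diverge is the step you call ``the hard part,'' and it is worth noting that this step is both dispensable and the only point where you lean on an unproved identification. The cell datum $(\mathfrak{B}_n^{(d)}-\mathfrak{D}_n^{(d)},Std,C,deg)$ is taken from Theorem \ref{cb419}, so its partial order is the dominance-type order $\unlhd$ of (\ref{podf}); your assertion that $\trianglelefteq$ and $\trianglelefteq'$ ``restrict there to the cell order of $\mathfrak{B}_n^{(d)}-\mathfrak{D}_n^{(d)}$'' identifies this dominance order with the restriction of the order of Definition \ref{2PO}, and that identification is stated, not proved (both directions of your order-compatibility equivalence run through it). Fortunately no order compatibility is needed at all. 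Once both quintuples are known to be cell data for the same algebra --- which the paper has already established, in the lemma following Lemma \ref{836} and in Theorem \ref{cb419} respectively --- and their bases coincide element-by-element with matched labels, the structure constants and the cellular bilinear forms are forced to agree: $r_a([s'],[s])$ is the coefficient of $C^{\lambda}_{s',t}$ in the unique expansion of $aC^{\lambda}_{s,t}$ in the common basis, and $\phi_{\lambda}(s,t)$ is the coefficient of $C^{\lambda}_{u,v}$ in the expansion of $C^{\lambda}_{u,s}C^{\lambda}_{t,v}$; in either cell datum the discarded lower-order terms carry an upper index $\mu\neq\lambda$, whichever partial order is used, so they cannot contribute to these coefficients. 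This yields $W([\mu])\cong W(\mu_0)$ and, since the Gram matrices then coincide, $L([\mu])\cong L(\mu_0)$, with no reference to either partial order --- which is presumably why the paper records the lemma as a direct consequence of the displayed identity.
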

Moreover, let $[\lambda],[\mu]$ be two reducible orbits. As $TL_{d,1,n}^{\Lambda^0}$-modules, we have
\begin{equation}\label{finaltool}
     [W([\lambda]):L([\mu])]=[ W(\lambda_0):L(\mu_0)].
\end{equation}

Since $TL_{r,p,n}^1$ acts trivially on $W([\lambda])$ where $[\lambda]$ is reducible, the left-hand side equals the decomposition numbers for the corresponding cell modules of $TL_{r,p,n}$. The right-hand side can be obtained from Theorem 6.23 in \cite{LL22}.
We are now in a position to describe the decomposition numbers of $TL_{r,p,n}$.

\begin{thm} \label{finaldcm}
       Let $TL_{r,p,n}$ be the Temperley-Lieb algebra for the imprimitive complex reflection group $G(r,p,n)$ as defined in Definition \ref{dfrpn}. According to Theorem \ref{csrpn}, $TL_{r,p,n}$ is a cellular algebra with respect to the poset $\mathfrak{P}_{n,p}$ in $(\ref{poset831})$.
       For an orbit of multipartitions $[\lambda]\in\mathfrak{P}_{n,p}$, let $W([\lambda])$ and $L([\lambda])$ be the cell and simple modules corresponding to $[\lambda]$, respectively.
       
       If $\lambda$ is irreducible (cf. Definition \ref{irrr}), then
       \begin{equation}
           W([\lambda])=L([\lambda]).
       \end{equation}
       
       If $\lambda$ is reducible, $W([\lambda])$ has no decomposition factors of the form $L([\mu])$ where $\mu$ is irreducible. 
       
       For any reducible orbit $[\mu]\in \mathfrak{P}_{n,p}$, let $\lambda_0$ and $\mu_0$ be the original representatives as defined in Definition \ref{orep}. Then 
		\begin{equation}
		[W([\lambda]),L([\mu])]=
		\begin{cases}
			1 &\text{ if } \lambda_0 \unlhd\mu_0 \text{ and there exists } t_0\in Std(\lambda_0) \text{ such that } e(t_0)=e_{\mu_0};\\
			0 &\text{ otherwise,}
		\end{cases}
	\end{equation}
		where $e(t_0)$ and $e_{\mu_0}$ are the KLR generators.
\end{thm}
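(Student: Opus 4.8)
The plan is to assemble the theorem from the structural results already in place, treating the irreducible and reducible orbits separately, since the genuine combinatorial content has been isolated in the preceding lemmas.

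First I would dispose of the irreducible case: for an irreducible orbit $[\lambda]\in\mathfrak{P}_{n,p}^1$ the identity $W([\lambda])=L([\lambda])$ is exactly Proposition \ref{835}, so the first displayed assertion requires no further argument. Next I would show that a reducible cell module has no irreducible composition factors. Fix a reducible orbit $[\lambda]\in\mathfrak{P}_{n,p}^0$ and an irreducible orbit $[\mu]\in\mathfrak{P}_{n,p}^1$. The displayed consequence of Lemma \ref{832} gives $C_p^{[\mu]}([u],[v])\,W([\lambda])=0$ for all $[u],[v]\in T_p([\mu])$. On the other hand, by the general theory of cellular algebras $L([\mu])=W([\mu])/\operatorname{rad}(\phi_{[\mu]})$ is characterised by the property that some $C_p^{[\mu]}([u],[v])$ acts nontrivially on it, exactly as in the proof of Proposition \ref{835}. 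Since $C_p^{[\mu]}$ annihilates $W([\lambda])$ but not $L([\mu])$, we conclude $[W([\lambda]):L([\mu])]=0$, which is the second assertion.

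It then remains to compute the multiplicities between two reducible orbits, and here I would invoke the reduction already recorded in (\ref{finaltool}). Because $TL_{r,p,n}^1=\ker f$ acts trivially on the reducible cell module $W([\lambda])$ (by the same annihilation used above), the $TL_{r,p,n}$-action on $W([\lambda])$ factors through the quotient $TL_{d,1,n}^{\Lambda^0}\cong TL_{r,p,n}^{\Lambda}/TL_{r,p,n}^1$ of Lemma \ref{836}. Hence the multiplicity $[W([\lambda]):L([\mu])]$ computed inside $TL_{r,p,n}$ agrees with $[W(\lambda_0):L(\mu_0)]$ computed inside $TL_{d,1,n}^{\Lambda^0}$, where $\lambda_0,\mu_0$ are the original representatives of Definition \ref{orep}. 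That latter number is precisely what Theorem 6.23 of \cite{LL22} supplies, its criterion being dominance $\lambda_0\unlhd\mu_0$ together with the existence of a standard tableau $t_0$ of shape $\lambda_0$ with $e(t_0)=e_{\mu_0}$, which is the condition displayed in the statement.

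The hard part is entirely contained in this last step, namely verifying that the cell datum $(\mathfrak{P}_{n,p}^0,T_p,\mathfrak{C}_p,deg_p)$ transported to $TL_{d,1,n}^{\Lambda^0}$ matches, orbit-by-orbit and tableau-by-tableau, the datum $(\mathfrak{B}_n^{(d)}-\mathfrak{D}_n^{(d)},Std,C,deg)$ underlying Theorem 6.23 of \cite{LL22}. I would lean on the identification $\mathfrak{C}_p^{[\mu]}([s],[t])=C^{\mu_0}(s_0,t_0)$ established just before the statement, so that cell modules, simple modules and the governing order $\unlhd$ all correspond under the original representative; this is what licenses reading off the explicit formula of \cite{LL22} verbatim, with $\lambda_0,\mu_0$ in place of the abstract labels. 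Once this compatibility is confirmed, the three cases together account for every composition factor of every cell module of $TL_{r,p,n}$.
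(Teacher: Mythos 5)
Your proposal is correct and follows essentially the same three-case argument as the paper: Proposition \ref{835} for irreducible orbits, the annihilation consequence of Lemma \ref{832} combined with the nontrivial action of some $C_p^{[\mu]}([u],[v])$ on $L([\mu])$ for the mixed case, and the reduction (\ref{finaltool}) plus Theorem 6.23 of \cite{LL22} for two reducible orbits. In fact your mixed case has the indices the right way around (the irreducible-orbit elements $C_p^{[\mu]}([u],[v])$ annihilate $W([\lambda])$, yielding $[W([\lambda]):L([\mu])]=0$), whereas the paper's printed proof swaps $\lambda$ and $\mu$ and literally concludes $[W([\mu]):L([\lambda])]=0$; the argument you give is the intended, correct one.
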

\begin{proof}
If $\lambda$ is irreducible, $W([\lambda])$ is simple by Proposition \ref{835}, so we have
\begin{equation*}
    W([\lambda])=L([\lambda]).
\end{equation*}

If $\lambda$ is reducible but $\mu$ is irreducible, Lemma \ref{832} implies that
 \begin{equation*}
    	C_p^{[\lambda]}([s],[t])W([\mu])=0,
\end{equation*}
 for all $[s],[t]\in T([\lambda])$. On the other hand, for any non-zero element $x\in L([\lambda]) $, there exist $[s],[t]\in T([\lambda])$ such that $C_p^{[\lambda]}([s],[t])x\neq 0$. Therefore, we have
  \begin{equation*}
      [W([\mu]):L([\lambda])]=0.
  \end{equation*}

   If both $\lambda$ and $\mu$ are reducible, then $(\ref{finaltool})$ applies.
 Hence by Theorem 6.23 in \cite{LL22},
 		\begin{align*}
 		 [W([\lambda]),L([\mu])]&=[ W(\lambda_0):L(\mu_0)]\\
 		 &=
		\begin{cases}
			1 &\text{ if } \lambda_0 \unlhd\mu_0 \text{ and there exists } t_0\in Std(\lambda_0) \text{ such that } e(t_0)=e_{\mu_0};\\
			0 &\text{ otherwise.}
		\end{cases}   
 		\end{align*}
\end{proof}

\section{Further developments: speculations on potential categorification and diagrammatics}

The original Temperley-Lieb algebras $TL_n(q)$ and the Temperley-Lieb algebras of ``type $B$'', $TL_n(Q,q)$, fit into a category, where they are the algebras of endomorphisms of the
objects. These categories may be shown to be equivalent to certain categories of representations of  quantum $\mathfrak{sl}_2$ (see \cite{IoharaLehrerZhang2021}). We briefly discuss here how 
our theory of generalised Temperley-Lieb algebras might be further developed in this direction.

Similarly to the categorification of $TLB_n(Q,q)$ in \cite{GRAHAM2003479}, we may define a Temperley-Lieb category of type $G(r,p,n)$, 
written $\mathbb{TL}_{r,p}$, where the objects are non-negative integers $\mathbb{N}$. Instead of the marked diagrams, we use linear combinations of standard tableau pairs as morphisms. 

Let $m, n$ be a pair of positive integers of the same parity. Using the notation above Definition \ref{irrr}, denote by $\lambda_{(i_1,l_1),(i_2,l_2)}^{[a]}$ a 3D-multipartition of $m$ and by $\mu_{(i_1,l_1),(i_2,l_2)}^{[a]}$ a 3D-multipartition of $n$. Let $t$ and $s$ be any standard tableaux of shape $\lambda$ and $\mu$, respectively. Then the pair $(t, s): m\mapsto n$ forms a morphism from $m$ to $n$ in $\mathbb{TL}_{r, p}$. Composition of two morphisms $(t, s): m\mapsto n$ and $(u, v): n\mapsto l$ could be defined as $(t, v): m\mapsto l$  multiplied by the bilinear form $\phi_{\mu}(s, u)$ in $TL_{r, p, n}$.

 It is an open question whether this generalised Temperley-Lieb category can be realised as a category of some representations, or indeed whether there are interesting
 functors from $\mathbb{TL}_{r,p}$ to categories of representations.

The categorification above generalises  that of $TLB_n(Q,q)$ in \cite{GRAHAM2003479}. When $r=2$ and $p=1$, the 3D-multipartition turns into a bipartition. 
To realise the objective of representing $\mathbb{TL}_{r, p}$ diagramatically, we offer the following hints, which we hope to implement in a future work. 

We can construct a bijective map $\psi_b$ between the standard tableaux of bipartitions of $n$ and the monic diagrams of type B, which is a collection of bijections $\psi_{b,\lambda}$ for all bipartitions $\lambda$ of $n$.
To construct the bijective map $\psi_{b, \lambda}$, we need to define the sign $(\pm)$ of a point in a marked monic diagram. A horizontal string is called negative if it is marked or covered by a marked string. 
Other horizontal strings are positive. A point is called negative if it is the left end of a negative string or a right end of a positive one. 
Otherwise, the point is called positive.  Figure-$\ref{fig-NgB}$ depicts an example of positive and negative points in a monic diagram of type B. 
{ Given a standard tableau $t$ of shape $\lambda$ with $j_1< j_2< \dots<j_i$ in the first component and $k_1<k_2<\dots <k_l$ in the second component, define the image $\psi_{b, \lambda}(t)$
to be  a monic diagram $D$, defined as follows:
\begin{itemize}
    \item If $i\geq l$, $D:n\mapsto n-2l$ is the monic diagram with $j_1< j_2< \dots<j_i$ positive and $k_1<k_2<\dots <k_l$ negative on the top line.
    \item If $i< l$, $D:n\mapsto n-2i$ is the monic diagram with $j_1< j_2< \dots<j_i$ negative and $k_1<k_2<\dots <k_l$ positive on the top line and add a blob on the left-most through string.
\end{itemize}} 

\begin{figure}[h]
	\begin{center}
		\begin{tikzpicture}[scale=1]
			
			\foreach \x in {1,2,3,4,5,6,7}
			\filldraw(\x,2) circle (0.05cm);
			
			\filldraw(4,0) circle (0.05cm);
			\filldraw(2.5,1.4) circle (0.1cm);
			\draw (4,0)--(5,2);


			\draw [dashed] (0,0)--(8,0); \draw [dashed] (0,2)--(8,2);
			\draw [dashed] (0,0)--(0,2);\draw [dashed] (8,0)--(8,2);
			
			\draw node[below] at (4,0){$1'$};
			
			\draw node[above] at (1,2){1(-)};\draw node[above] at (2,2){2(-)};\draw node[above] at (3,2){3(+)};
			\draw node[above] at (4,2){4(+)};\draw node[above] at (5,2){5(+)};\draw node[above] at (6,2){6(+)};\draw node[above] at (7,2){7(-)};


			\draw(1,2) .. controls (2,1.2) and (3,1.2) .. (4,2);
			
			\draw(2,2) .. controls (2.2,1.7) and (2.8,1.7) .. (3,2);
			\draw(6,2) .. controls (6.2,1.7) and (6.8,1.7) .. (7,2);
			
			
		\end{tikzpicture}
	\end{center}
	\caption{Positive and negative points of Type B} \label{fig-NgB}
\end{figure}

{The existence and uniqueness of $\psi_{b, \lambda}(t)$ may be proved by induction on $n$.}

Given bipartitions $\lambda_{(1,1),(2,2)}^{[a]}$ of $m$ and $\mu_{(1,1),(2,2)}^{[a]}$ of $n$, we obtain two monic diagrams $C: m\mapsto |a|$ and $D: n\mapsto |a|$. 
By suitably concatenating these two diagrams, we obtain a diagrammatic morphism in the category $\mathbb{TB}$ which is defined in  \cite{GRAHAM2003479}.

This observation indicates that it may be possible to find a diagrammatic description for $TL_{r, p, n}$ through the tableaux and the cellular basis in Theorem \ref{csrpn}. One outstanding obstacle 
to completing this program is that the bilinear form $\phi_{\lambda}$, which is needed to replace loops in the concatenation, depends on the multipartition $\lambda$ and a formula
for its value is not yet known. 
\bibliographystyle{hsiam}
\bibliography{refs2}

\begin{thebibliography}{10}

\bibitem{ARIKI1995164}
{\sc S.~Ariki}, {\em Representation theory of a {H}ecke algebra of ${G}(r, p, n)$}, Journal of Algebra, 177 (1995), pp.~164--185.

\bibitem{broue1994complex}
{\sc M.~Broué, G.~Malle, and R.~Rouquier}, {\em Complex reflection groups, braid groups, {H}ecke algebras}, J. Reine Angew. Math., 1998 (1998), pp.~127--190.

\bibitem{Brundan_2009}
{\sc J.~Brundan and A.~Kleshchev}, {\em Blocks of cyclotomic {H}ecke algebras and {K}hovanov-{L}auda algebras}, Inventiones mathematicae, 178 (2009), p.~451–484.

\bibitem{BrundanKleshchev_2009}
\leavevmode\vrule height 2pt depth -1.6pt width 23pt, {\em Graded decomposition numbers for cyclotomic {H}ecke algebras}, Advances in Mathematics, 222 (2009), p.~1883–1942.

\bibitem{Lehrer1996}
{\sc J.~Graham and G.~Lehrer}, {\em Cellular algebras.}, Inventiones mathematicae, 123 (1996), pp.~1--34.

\bibitem{GRAHAM2003479}
\leavevmode\vrule height 2pt depth -1.6pt width 23pt, {\em Diagram algebras, hecke algebras and decomposition numbers at roots of unity}, Annales Scientifiques de l’École Normale Supérieure, 36 (2003), pp.~479--524.

\bibitem{GROSSMAN2007477}
{\sc P.~Grossman}, {\em Forked {Temperley–Lieb} algebras and intermediate subfactors}, Journal of Functional Analysis, 247 (2007), pp.~477--491.

\bibitem{hu2021skew}
{\sc J.~Hu, A.~Mathas, and S.~Rostam}, {\em Skew cellularity of the {H}ecke algebras of type {$G(\ell,p,n)$}}, arXiv e-prints,  (2021), 2106.11459.

\bibitem{IoharaLehrerZhang2021}
{\sc K.~Iohara, G.~Lehrer, and R.~Zhang}, {\em Equivalence of a tangle category and a category of infinite dimensional $\mathrm {U}_q(\mathfrak {sl}_2)$-modules}, Representation Theory of the American Mathematical Society, 25 (2021), pp.~265--299.

\bibitem{khovanov2008diagrammatic}
{\sc M.~Khovanov and A.~D. Lauda}, {\em A diagrammatic approach to categorification of quantum groups i}, Representation Theory, 13 (2009), pp.~309--347.

\bibitem{LL22}
{\sc G.~Lehrer and M.~Lyu}, {\em Generalised {Temperley-Lieb} algebras of type {$G(r,1,n)$}}, arXiv e-prints,  (2022), 2209.00186.

\bibitem{Li2021}
{\sc Y.~Li and X.~Shi}, {\em Semi-simplicity of {Temperley-Lieb} algebras of type {D}}, arXiv e-prints,  (2021), 2003.06805.

\bibitem{LOBOSMATURANA2020106277}
{\sc D.~L. Maturana and S.~Ryom-Hansen}, {\em Graded cellular basis and {Jucys-Murphy} elements for generalized blob algebras}, Journal of Pure and Applied Algebra, 224 (2020).

\bibitem{murphy1995representations}
{\sc G.~Murphy}, {\em The representations of hecke algebras of type $a_n$}, Journal of Algebra, 173 (1995), pp.~97--121.

\bibitem{Rostam_2018}
{\sc S.~Rostam}, {\em Cyclotomic quiver hecke algebras and {H}ecke algebra of {$G(r,p,n)$}}, Transactions of the American Mathematical Society, 371 (2018), p.~3877–3916.

\bibitem{rouquier20082kacmoody}
{\sc R.~Rouquier}, {\em 2-kac-moody algebras}, arXiv: Representation Theory,  (2008).

\bibitem{Sun2009}
{\sc J.~Sun}, {\em Cyclotomic {Temperley–Lieb} algebra of type {D} and its representation theory}, Communications in Algebra, 38 (2009), pp.~94--118.

\bibitem{TL1971}
{\sc H.~N.~V. Temperley and E.~H. Lieb}, {\em Relations between the 'percolation' and 'colouring' problem and other graph-theoretical problems associated with regular planar lattices: Some exact results for the 'percolation' problem}, Proceedings of the Royal Society of London. Series A, Mathematical and Physical Sciences, 322 (1971), pp.~251--280.

\end{thebibliography}
\end{document}